\tikzstyle{arc}=[->,shorten <=3pt, shorten >=3pt,
\tikzstyle{edge}=[shorten <=2pt, shorten >=2pt,
\tikzstyle{vertex}=[circle, fill=white, draw,
\newtheorem{theorem}{Theorem}
\newtheorem*{theorem*}{Theorem}
\newtheorem{lemma}[theorem]{Lemma}
\newtheorem{observation}[theorem]{Observation}
\newtheorem{proposition}[theorem]{Proposition}
\newtheorem{corollary}[theorem]{Corollary}
\newtheorem*{problem}{Problem}
\title{Lattice path bicircular matroids}
\author[1,2]{Santiago~Guzm\'an-Pro\thanks{sanguzpro@ciencias.unam.mx}}
\author[2]{Winfried~Hochst\"attler\thanks{winfried.hochstaettler@fernuni-hagen.de}}
\affil[1]{\small{Facultad de Ciencias\\
Universidad Nacional Aut\'onoma de M\'exico\\
C.P. 04510, Ciudad Universitaria, M\'exico}}
\affil[2]{FernUniversit\"at in Hagen\\
Fakult\"at f\"ur Mathematik und Informatik\\
 58084 Hagen}
\begin{document}
\date{}

\maketitle

\begin{abstract}
Lattice path matroids and bicircular matroids are two well-known classes
of transversal matroids. In the seminal work of Bonin and de Mier about
structural properties of lattice path matroids, the authors claimed that 
lattice path matroids significantly differ from bicircular matroids.
Recently, it was proved that all cosimple lattice path matroids
have positive double circuits, while it was shown that there is a large class
of cosimple bicircular matroids with no positive double circuits. These 
observations  support Bonin and de Miers' claim. 
Finally, Sivaraman and Slilaty suggested studying the intersection
of lattice path matroids and bicircular matroids as a possibly interesting
research topic. 
In this work, we exhibit the excluded bicircular matroids for the class of lattice path
matroids, and we propose a characterization of the graph family whose bicircular 
matroids are lattice path matroids. As an application of this characterization, 
we propose a geometric description of $2$-connected lattice path bicircular matroids.
\end{abstract}

\section{Introduction}

Bicircular matroids are a minor closed class of matroids that arise from graphs.
There is considerable amount of  research towards studying certain subclasses
of bicircular matroids. Matthews~\cite{matthewsQJMOS28} characterized
bicircular matroids which are also graphic matroids. In~\cite{sivaramanDM328},
ternary bicircular matroids are studied, and in~\cite{chunDM339},
bicircular matroids representable over $GF(4)$ and $GF(5)$
are characterized. Neudauer~\cite{neudauerphd} considered bicircular matroids that
are fundamental transversal matroids. Finally, Sivaraman and
Slilaty~\cite{sivaramanGC2022}
characterize the class of $3$-connected bicircular matroids whose duals 
are bicircular matroids as well.
In the previous work, the authors suggested that studying the intersection
of lattice path matroids and bicircular matroids might be an interesting
research topic. 

Bonin and de Mier \cite{boninEJC27} showed that lattice path matroids are
a class of transversal matroids closed under minors and duality. Furthermore, 
they observed that these matroids have several strong structural properties.
For instance, all connected lattice path matroids have a spanning circuit. 
In the same work, the authors claimed that bicircular matroids and lattice path matroids
are significantly different classes of transversal matroids. In particular, 
they noticed that every uniform matroid is a lattice path matroid, while the 
only uniform bicircular matroids are $U_{3,5}$, $U_{3,6}$ and
$U_{4,6}$, and  the families $U_{1,n}$, $U_{2,n}$, $U_{n,n}$ and $U_{n,n+1}$
for $n\ge 0$. 
This remark yields a large class of lattice path matroids that are not
bicircular matroids.

Recently, and in a different context, it was shown that every simple
lattice path matroid has a positive coline~\cite{albrechtPhd}. 
Since the class of lattice path matroids is closed under duality, the previous
statement implies that every cosimple lattice path matroid has a positive
double circuit. On the contrary, we exhibited a large class of cosimple
bicircular matroids with no positive double circuits~\cite{guzmanAR}. 
This class is obtained by considering bicircular matroids of graphs 
with minimum degree at least $3$ and with girth at least $5$.
So, the bicircular matroids of such graphs are not lattice path matroids,
which yields a large class of bicircular matroids that are not
lattice path matroids.

The observations of the two paragraphs above, motivate us to 
study the class of lattice path bicircular matroids. 
We show that indeed, lattice path bicircular matroids are a thin family of matroids. 
This is done by constructing five graph families $\mathcal{F}_i$ with $i\in\{0,\dots, 4\}$
such that the bicircular matroid of a graph $G\in \mathcal{F}_i$ is a lattice path
matroid. This is one of our two main results.

\begin{theorem}\label{thm:graphs}
Let $G$ be a graph such that $B(G)$ is a connected matroid. 
The bicircular matroid $B(G)$ is a lattice path matroid if and only if
$G$ belongs to $\mathcal{F}_0\cup\mathcal{F}_1\cup \mathcal{F}_2 \cup
\mathcal{F}_3 \cup \mathcal{F}_4$.
\end{theorem}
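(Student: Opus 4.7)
The plan is to prove the two implications separately. For sufficiency I will work family by family: for each $\mathcal{F}_i$, I will exhibit an explicit lattice path presentation of $B(G)$ whenever $G\in\mathcal{F}_i$. Since the bicircular matroid of $G$ already admits a standard transversal presentation on the vertex set of $G$ (each edge lies in the sets indexed by its endpoints), the task reduces to choosing an ordering of the vertices and of the edges so that the resulting set system matches the nesting pattern required by Bonin and de Mier's definition of a lattice path matroid. The restricted shape of each family (few cycles, bounded vertex degrees, and a controlled way of attaching trees to cycles) should make such an ordering essentially readable off the graph, so that this half of the theorem reduces to five independent constructions.

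For necessity, I will start from a connected graph $G$ with $B(G)$ a connected lattice path matroid and successively narrow down the structure of $G$. The central tool is the dichotomy recalled in the introduction: if $G$ has minimum degree at least three and girth at least five, then $B(G)$ has no positive double circuit \cite{guzmanAR}, so it cannot be a cosimple lattice path matroid. Combined with minor-closedness and duality of the class, this forces $G$ either to contain a vertex of low degree or to carry a short cycle. Splitting into these cases and using the spanning circuit property of connected lattice path matroids, I will control the number of pairwise vertex-disjoint cycles in $G$, the way cycles interact, and the trees attached to them; the positive coline condition of \cite{albrechtPhd}, applied to $B(G)^{*}$ when it is simple, gives an additional constraint. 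A careful enumeration of the resulting possibilities should match each $G$ with exactly one of $\mathcal{F}_0,\dots,\mathcal{F}_4$.

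I expect the main obstacle to lie in this last enumeration, for two reasons. First, different graphs may produce isomorphic bicircular matroids, so any case analysis must be performed up to the equivalence of bicircular presentations (or, alternatively, modulo some graph-theoretic normal form). Second, ensuring exhaustiveness is delicate: one must guarantee that no sporadic graph outside the five families slips through. A clean way to handle both issues is to use the excluded-minor result promised in the abstract, namely the list of excluded bicircular matroids for the class of lattice path matroids. Once that list is in hand, each excluded matroid translates into a finite set of forbidden local configurations inside $G$, and the proof of necessity reduces to verifying that the graphs avoiding all these configurations while still realizing a connected bicircular matroid are exactly the members of $\mathcal{F}_0\cup\mathcal{F}_1\cup\mathcal{F}_2\cup\mathcal{F}_3\cup\mathcal{F}_4$.
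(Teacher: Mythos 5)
Your plan coincides with the paper's: sufficiency is proved family by family by exhibiting lattice path presentations (the paper bootstraps from a few explicit base diagrams using lemmas on series extensions, clone extensions and loop sums, since each $\mathcal{F}_i$ is an infinite family and a single ``read-off'' presentation does not suffice), and necessity is proved exactly as in your final paragraph, by showing that a connected bicircular matroid avoiding the eight excluded minors must come from a graph in $\mathcal{F}_0\cup\cdots\cup\mathcal{F}_4$ via an exhaustive case analysis on subdivisions of small graphs. The detour through positive double circuits and positive colines in your second paragraph is not used in the paper and becomes unnecessary once the excluded-minor list is in hand.
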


Moreover, we propose a characterization of lattice path bicircular matroids
within the class of bicircular matroids. We do so  by exhibiting the list of excluded
bicircular minors to lattice path matroids.

\begin{theorem}\label{thm:excludedminors}
A bicircular matroid is a lattice path matroid if and only if
it has no one of the following matroids as a minor:
\[
C^{2,4},~\mathcal{W}^3,~A^3,~R^3,~R^4,~D^4,~B^1, \text{ and } S^1.
\]	
\end{theorem}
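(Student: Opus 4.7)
The strategy is to leverage Theorem~\ref{thm:graphs} together with the fact that both the class of bicircular matroids and the class of lattice path matroids are closed under minors and direct sums. Because direct sums preserve both classes, we may restrict attention to connected bicircular matroids. Proving Theorem~\ref{thm:excludedminors} then reduces to two tasks: (i) verifying that each of the eight listed matroids is a bicircular matroid which is not a lattice path matroid, and (ii) showing that whenever $B(G)$ is a connected bicircular matroid with $G\notin\bigcup_{i=0}^{4}\mathcal{F}_i$, one of the eight matroids appears as a minor of $B(G)$.

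For task (i), I would exhibit for each of $C^{2,4},\mathcal{W}^3,A^3,R^3,R^4,D^4,B^1,S^1$ a small graph whose bicircular matroid realizes it, thereby certifying that the matroid is bicircular. To rule out membership in the lattice path class, I would apply Theorem~\ref{thm:graphs} directly in the connected case, checking that the chosen graph lies in none of $\mathcal{F}_0,\dots,\mathcal{F}_4$; when the bicircular matroid happens to be disconnected, the argument can be completed by passing to a connected summand or by exhibiting an explicit structural obstruction, such as the absence of a spanning circuit or a known forbidden minor within the lattice path class.

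For task (ii), the approach is a minor-minimal counterexample argument. Assume $G$ is chosen so that $B(G)$ is a connected bicircular matroid that is not a lattice path matroid, and so that every proper bicircular minor of $B(G)$ coming from a minor of $G$ already is a lattice path matroid. By Theorem~\ref{thm:graphs}, $G\notin\bigcup_{i=0}^{4}\mathcal{F}_i$. I would then split into cases according to the structural reason $G$ fails to belong to any $\mathcal{F}_i$: the presence of a vertex of inadmissibly high degree, of a cycle of forbidden length or location, of too many cycles sharing a vertex, of a theta-subgraph of the wrong type, and so on. In each case I would pinpoint the minimal offending subgraph $H\subseteq G$, carry out the corresponding deletions and contractions at the level of the bicircular matroid, and match the resulting minor with one of the eight matroids in the list.

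The main obstacle will be making the case analysis in (ii) genuinely exhaustive. The families $\mathcal{F}_0,\dots,\mathcal{F}_4$ are described by several essentially different topological conditions, and the bicircular operations of deletion and contraction interact with the underlying graph in subtle ways: contracting a non-loop edge, for example, is not the usual graph contraction but produces a loop at the identified vertex, and deleting parallel edges can change the set of cycles nontrivially. A clean way to finish would be to catalogue, once and for all, the bicircular-minor-minimal graphs lying outside $\bigcup_{i=0}^{4}\mathcal{F}_i$ and to verify that this catalogue coincides with the eight graphs that give rise to the listed excluded matroids; this would simultaneously close (i) and (ii).
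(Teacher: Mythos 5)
Your high-level architecture is the same as the paper's: reduce to the connected case using closure under direct sums, establish that the eight matroids are bicircular but not lattice path matroids, and show that any connected bicircular matroid avoiding all eight is represented by a graph in $\bigcup_{i=0}^{4}\mathcal{F}_i$, whence Theorem~\ref{thm:graphs} finishes the argument. However, there are two genuine problems. First, your primary route for task (i) --- applying Theorem~\ref{thm:graphs} to conclude that the eight matroids are not lattice path matroids --- is circular: the ``only if'' direction of Theorem~\ref{thm:graphs} is itself deduced from the fact that the eight matroids are excluded minors (Proposition~\ref{prop:exminors} feeding into Proposition~\ref{prop:F0-F4}), so non-membership must be certified independently of that theorem. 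The paper does this by matching seven of the matroids against Bonin's list of excluded minors for lattice path matroids (using duality for $S^1$) and, for $B^1$, by checking via Proposition~\ref{prop:boninff} that its fundamental flats do not form two chains ordered by inclusion. Your fallback of ``a known forbidden minor within the lattice path class'' is the right idea, but it is offered only as a hedge and would need to be the main argument.

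Second, task (ii) is where essentially all of the work lies, and your proposal leaves it as a sketch. The statement you need is exactly Proposition~\ref{prop:F0-F4}: if $B(G)$ is connected and has none of the eight as a minor, then $G$ lies in some $\mathcal{F}_i$. The paper proves this not by a minor-minimal counterexample argument but by a structural trichotomy --- non-outerplanar graphs, $2$-connected graphs, and outerplanar graphs with a cut vertex --- using $C^{2,4}$ to force the block tree to be a path, series-extension arguments on uniform matroids to bound the number of subdivided edges, and explicit minor hunts inside supergraphs of $K_{2,3}$, $K_4$, $G_1$, $2K_3$ and $K_3(r,j,l)$. Proposing to ``catalogue the bicircular-minor-minimal graphs outside the union'' restates the problem rather than solving it, and the catalogue would not literally coincide with eight graphs, since several of the listed matroids admit multiple non-isomorphic bicircular representations (for instance $P_1,\dots,P_4$ for $C^{2,4}$). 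As it stands the proposal is a plausible plan with one circular step and with its central lemma unproved.
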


We denote by $ex_B(\mathcal{L})$ the set that contains the eight
matroids listed in Theorem~\ref{thm:excludedminors}. The proofs
of Theorems~\ref{thm:graphs} and~\ref{thm:excludedminors}  consists
of three main blocks. In Section~\ref{sec:EM}, we define the matroids 
listed in Theorem~\ref{thm:excludedminors}, and we show that these
are excluded bicircular minors to  lattice path matroids. 
In Section~\ref{sec:GF}, we construct the graph families $\mathcal{F}_i$
for $i\in\{0,\dots, 4\}$, and show that each graph in these families has a
lattice path bicircular matroid. Finally, in Section~\ref{sec:EMGF},
we observe that if a connected bicircular matroid $B(G)$  has no
 $ex_B(\mathcal{L})$-minor, then $G$ belongs to some $\mathcal{F}_i$.
Building on the work from Sections~\ref{sec:EM}, \ref{sec:GF} and \ref{sec:EMGF},
we prove Theorems~\ref{thm:graphs}  and~\ref{thm:excludedminors}
in Section~\ref{sec:CAR}, where we also discuss some of their implications.
For instance, we propose a geometric description of $2$-connected 
lattice path bicircular matroids.

We begin with Section~\ref{sec:prelim}, where we introduce
bicircular matroids and lattice path matroids, together with certain properties
of these  that will be useful for our work.

\section{Preliminaries}
\label{sec:prelim}

We assume familiarity with basic matroid and graph theory.
A standard reference for matroid theory is \cite{oxley2011} and
for graph theory is \cite{bondy2008}. In particular, 
given a graph $G$ and a subset of edges $I$,  we denote by
$G[I]$ the subgraph of $G$ induced by $I$. That is,  $G[I]$ is the subgraph of $G$
with edge set $I$ and no isolated vertices.

Let $G$ be a (not necessarily simple) graph with vertex set $V$ and edge set $E$.
The \textit{bicircular matroid} of $G$ is the matroid $B(G)$ with ground set $E$
whose independent sets are the edge sets $I\subseteq E$ such that  $G[I]$
contains at most one cycle in each connected component. 
Equivalently, the circuits of $B(G)$ are the edge sets of subgraphs which are
subdivisions of: Two loops on the same vertex, two loops joined
by an edge, or three parallel edges joining a pair of vertices.


\begin{center}
\begin{tikzpicture}[scale=0.8]

\begin{scope}[xshift=-4cm, yshift=0cm, scale=1]
\node [vertex] (0) at (0,0){};

\path[-]         (0) edge [min distance=1cm, out=45, in=-45] (0);
\path[-]         (0) edge [min distance=1cm, out=225, in=135] (0);
\end{scope}

\begin{scope}[xshift=0cm, yshift=0cm, scale=0.9]
\node [vertex] (0) at (-0.7,0){};
\node [vertex] (1) at (0.7,0){};

\draw [edge] (1) to (0);

\path[-]         (1) edge [min distance=1cm, out=45, in=-45] (1);
\path[-]         (0) edge [min distance=1cm, out=225, in=135] (0);

\end{scope}

\begin{scope}[xshift=4cm, yshift=0cm, scale=0.8]
\node [vertex] (0) at (0,-1){};
\node [vertex] (1) at (0,1){};

\draw [edge] (0) to (1);

\path[-]          (0)  edge   [bend left]   (1);
\path[-]          (0)  edge   [bend right]   (1);
\end{scope}
\end{tikzpicture}
\end{center}

Lattice path matroids arise from lattice paths in a region
bounded by a pair of paths. A \textit{lattice path} consists of a starting point $S$,
$S\in \mathbb{Z}^2$, and a finite sequence of steps $E$, $E= (1,0)$, and $N$,
$N = (0,1)$. Unless stated otherwise we assume that $S = (0,0)$,
so a lattice path can be simply thought as a word over the alphabet $\{E,N\}$.
The steps $E$ and $N$ are called \textit{East} and \textit{North} steps,
respectively. The \textit{endpoint} of a lattice path $P$ is the sum (in
$\mathbb{Z}^2$) of the steps of $P$ (plus $S$ when $S\neq (0,0)$). 

Consider a pair $P$ and $Q$ of lattice paths from $(0,0)$ to $(m,r)$,
where $P$ never goes above $Q$. Let $\mathcal{P}$ be the set
of all lattice paths from $(0,0)$ to $(m,r)$ that do not go below
$P$ nor above $Q$. For every $i\in\{1,\dots r\}$ let $N_i$ be the set
$\{j\colon$ step $j$ is the $i$-th North step of some path in $\mathcal{P}\}$. 
The matroid $M[P,Q]$ is defined as the transversal matroid with ground
set $[m+r]$ that has $(N_1,\dots, N_r)$ as a presentation. We call
$(N_1,\dots, N_r)$ the \textit{standard presentation} of $M[P,Q]$. 
A \textit{lattice path matroid} is a matroid $M$ isomorphic to $M[P,Q]$
for some lattice paths $P$ and $Q$. Notice that in this case, the rank of $M$
is $r$ and its corank is $m$.

Equivalently \cite{boninEJC27}, a matroid $M$ with ground set $E$ is a
lattice path matroid if and only if there is a linear ordering
$e_1\le e_2 \le \cdots \le e_n$ of $E$, and a collection $\mathcal{N}$
of incomparable intervals of $(E,\le)$ such that $M$ is isomorphic to the 
transversal matroid $(E,\mathcal{N})$. We call
$e_1\le e_2 \le \cdots \le e_n$ the \textit{interval ordering} of $\mathcal{N}$, 
and say that $(E,\mathcal{N})$ is an \textit{interval presentation}
of $M$. In particular, every standard presentation is an interval
presentation.  

Lattice path matroids have several nice structural properties investigated
by Bonin and de Mier \cite{boninEJC27}. In particular, we are interested
in the collection of \textit{fundamental flats}.
Let $X$ be a connected flat of a connected matroid $M$ for which $|X| > 1$
and $r(X) < r(M)$. We say that $X$ is a \textit{fundamental flat} of $M$ if for
some spanning circuit $C$ of $M$ the intersection $X\cap C$ is a basis of $X$.
In \cite{boninEJC27}, the authors assert that the fundamental flats of
a lattice path matroid form two disjoint chains ordered by inclusion.

\begin{proposition}\cite{boninEJC27}\label{prop:boninff}
Assume $M[P, Q]$ is a connected lattice path matroid of rank $r$ and corank $m$.
Let $X$ be a connected flat of $M[P, Q]$ with $|X| > 1$ and $r(X) < r$. Then
$X$ is a fundamental flat of $M[P, Q]$ if and only if $X$ is an initial or a final
segment of $[m + r ]$.
\end{proposition}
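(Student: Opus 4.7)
The plan is to exploit the standard presentation $(N_1,\dots,N_r)$ of $M[P,Q]$ where $N_i=[a_i,b_i]$ and, by connectedness of $M[P,Q]$, the intervals overlap as $a_{i+1}\le b_i$. A first simplification: every proper subset of a circuit is independent, so if $C$ is a spanning circuit and $r(X)<r$, then $C\cap X$ has at most $r(X)<|C|$ elements and is automatically independent in $M|X$. Hence the condition ``$C\cap X$ is a basis of $X$'' reduces to the counting identity $|C\cap X|=r(X)$, and I only need to control the size of this intersection.

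For the \emph{if} direction, assume $X=[1,k]$ is an initial segment that is a connected flat with $|X|>1$ and $s:=r(X)<r$; the final-segment case is symmetric. Because $X$ is a flat one checks that $N_{s+1}\cap X=\emptyset$, and connectedness of $X$ in $M|X$ forces $k=b_s<a_{s+1}$. The restriction $M|X$ is then itself a connected lattice path matroid with presentation $(N_1,\dots,N_s)$, and therefore admits a spanning circuit $C_X\subseteq X$ of size $s+1$. I would then lift $C_X$ to a spanning circuit $C$ of $M$ of size $r+1$ with $|C\cap X|=s$ by deleting one element $e\in C_X$ and adjoining $e$ together with a basis of the contraction $M/X$ (read off from $(N_{s+1},\dots,N_r)$); the interval structure guarantees that the only dependence this creates is the one inherited from $C_X$, so $C$ is indeed a circuit.

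For the \emph{only if} direction, assume $X=[i,j]$ is a connected flat of rank $s$ with $i\ge 2$ and $j\le m+r-1$, and suppose for contradiction that a spanning circuit $C$ of $M$ satisfies $|C\cap X|=s$. The remaining $r+1-s$ elements of $C$ split across the two nonempty pieces $[1,i-1]$ and $[j+1,m+r]$. Here the key structural lemma is that any circuit of $M[P,Q]$ is supported on a consecutive block $\{N_p,\dots,N_q\}$ of presentation classes; a spanning circuit therefore occupies \emph{all} classes $\{N_1,\dots,N_r\}$ and, via the overlap inequalities $a_{t+1}\le b_t$, is forced to meet the interior interval $X$ in strictly fewer than $s$ elements, which is the desired contradiction.

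The main obstacle I anticipate is establishing this structural lemma underlying the ``only if'' direction: that circuits of $M[P,Q]$ respect the interval presentation tightly enough to prevent a spanning circuit from meeting an interior flat in exactly a basis. I would prove it through a Hall-type analysis on the bipartite presentation of the transversal matroid, or alternatively extract it from the circuit classification of lattice path matroids given in~\cite{boninEJC27}.
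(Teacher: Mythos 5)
The paper does not prove this proposition at all --- it is imported verbatim from Bonin and de Mier \cite{boninEJC27} --- so there is no in-paper argument to measure yours against; I can only judge the proposal on its own terms. Your opening reduction, that ``$C\cap X$ is a basis of $X$'' is equivalent to $|C\cap X|=r(X)$ because a proper subset of a circuit is automatically independent, is correct and a genuinely useful simplification. But both directions then rest on claims that are either false or unproven. In the \emph{if} direction, the structural assertion that connectedness forces $k=b_s<a_{s+1}$ fails: with $N_1=[1,3]$, $N_2=[3,5]$, $N_3=[5,7]$ the initial segment $X=[1,4]$ is a connected flat of rank $2$ (the elements $1,2$ lie only in $N_1$, the element $4$ only in $N_2$, and each of $5,6,7$ raises the rank), yet $k=4$ while $b_2=a_3=5$. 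Worse, the lifting step is incoherent as written: any set containing $C_X$ properly contains a circuit and so cannot itself be a circuit, and under the charitable reading $C=(C_X\setminus\{e\})\cup D$ with $D$ disjoint from $X$, the claim that ``the interval structure guarantees that the only dependence this creates is the one inherited from $C_X$'' is precisely the statement that needs proving --- one must verify that every $r$-subset of $C$ is a transversal, and nothing in the sketch does so.

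In the \emph{only if} direction the entire burden is carried by the ``key structural lemma,'' which you acknowledge you have not established, and which as stated does not suffice. Suppose $X$ occupies the consecutive classes $N_p,\dots,N_{p+s-1}$ and $C$ is a spanning circuit with $|C\cap X|=s$; the Hall-type count applied to $C\setminus\{x\}$ for $x$ to the right of $X$ gives $|C\setminus X|\ge (p-1)+(r-p-s+1)+1=r-s+1$, which equals $|C\setminus X|$ exactly, so consecutiveness of circuit supports plus counting yields no contradiction. The genuine obstruction is subtler, in part because a connected flat can be a proper subset of $N_p\cup\dots\cup N_{p+s-1}$ (e.g.\ the parallel class $\{3,4\}$ inside $N_2=[2,4]$ when $N_1=[1,2]$), so boundary elements outside $X$ may still represent the classes that $X$ occupies, and ruling this out requires using the flat condition in a way the sketch never articulates. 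You also tacitly assume that every connected flat is an interval $[i,j]$ of the ground-set ordering; that is itself a nontrivial structural theorem of \cite{boninEJC27} that must be proved or cited before the case analysis even begins. In short, the plan names the right objects but defers all of the substance to assertions, at least one of which is false as stated.
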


\subsection*{\normalfont{\textsc{Series extension}}}

Sometimes, we will need to argue that if the bicircular matroid
of some graph is a lattice path matroid, then the bicircular matroids
of certain subdivisions of this graph are also lattice path matroids.
Since edge subdivisions correspond to coparallel extensions, we begin
by proving the following statement.

\begin{lemma}\label{lem:copar}
Let $L$ be a lattice path matroid on $[n]$ with interval presentation
$(N_1,\dots, N_k)$. The matroid obtained by adding a coparallel element $a$
to $1$ and a coparallel element $b$ to $n$ is a lattice path
matroid with interval presentation $\big(\{a,1\}, N_1,\dots, N_k, \{n,b\}\big)$
and interval ordering  $a \le 1 \le\cdots\le n\le b$.
\end{lemma}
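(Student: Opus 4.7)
The plan is to verify two things: that $(\{a,1\}, N_1, \dots, N_k, \{n,b\})$ is genuinely an interval presentation under the extended ordering $a \le 1 \le \cdots \le n \le b$, and that the transversal matroid $M'$ defined by this presentation equals the coparallel extension described in the statement.

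For the first point, each of $\{a,1\}$, $\{n,b\}$ and each $N_i$ is an interval in the extended order. The $N_i$ are pairwise incomparable by hypothesis. Since $a \notin [n]$, $\{a,1\}$ cannot be contained in any $N_i$; conversely, $N_i \subseteq \{a,1\}$ would force $N_i = \{1\}$, making $1$ a coloop of $L$ and contradicting the possibility of attaching a coparallel element at $1$. The analogous argument handles $\{n,b\}$, and $\{a,1\} \cap \{n,b\} = \emptyset$ (the case $n=1$ being trivial). This is the most delicate step: ruling out the containments rests on $1$ and $n$ not being coloops of $L$, which is precisely what the coparallel extension hypothesis provides.

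For the second point, I would invoke the uniqueness of the coparallel extension: it suffices to show that $\{a,1\}$ and $\{n,b\}$ are cocircuits of $M'$ and that $M'/\{a,b\} = L$. A rank calculation via SDRs gives $r(M') = k+2$, realized by picking $a$ for $\{a,1\}$, $b$ for $\{n,b\}$, and any basis of $L$ for the $N_i$; the fact that $1$ is not a coloop of $L$ then ensures $r(M' \setminus a) = r(M' \setminus 1) = k + 2$ while $r(M' \setminus \{a, 1\}) = k + 1$, so $\{a,1\}$ is a cocircuit, and symmetrically for $\{n,b\}$. Finally, $B \subseteq [n]$ is a basis of $M'/\{a,b\}$ iff $B \cup \{a,b\}$ is an SDR of the full presentation; but $a$ and $b$ are forced to represent the boundary intervals, leaving $B$ as an SDR of $(N_1, \dots, N_k)$ in $[n]$, i.e., a basis of $L$. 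Hence $M' = L'$, as desired.
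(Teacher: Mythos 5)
Your proof is correct, and it takes a noticeably different route from the paper's. The paper simply asserts that $\big(\{a,1\},N_1,\dots,N_k,\{n,b\}\big)$ is again a collection of incomparable intervals, and then identifies the coparallel pairs by a direct circuit argument: if a circuit $C$ of the new transversal matroid contained $a$ but not $1$, then $C-a$ would be a partial transversal avoiding the set $\{a,1\}$ entirely, so $C$ itself would be a partial transversal (matching $a$ into $\{a,1\}$), a contradiction; the symmetric argument handles $n$ and $b$. You instead verify the incomparability claim explicitly --- and in doing so you catch the one place it could fail, namely $N_i=\{1\}$ or $N_i=\{n\}$, which you correctly rule out by observing that this would make $1$ (resp.\ $n$) a coloop, incompatible with attaching a coparallel element there; the paper glosses over this edge case. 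For the identification of the matroid you then compute ranks via systems of distinct representatives to exhibit $\{a,1\}$ and $\{n,b\}$ as cocircuits, check that contracting $\{a,b\}$ recovers $L$, and invoke uniqueness of the series extension. This is more work than the paper's circuit argument, but it buys completeness: the paper never explicitly verifies that the new matroid contracts back to $L$, which is needed to conclude that it is \emph{the} coparallel extension of $L$ rather than merely a matroid in which $a,1$ and $n,b$ happen to be series pairs. Both arguments are sound; yours is the more self-contained of the two.
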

\begin{proof}
Since $(N_1,\dots, N_k)$ is an ordered collection of incomparable intervals
of $[n]$, then $\big(\{a,1\}, N_1,\dots, N_k, \{n,b\}\big)$ is an ordered collection
of incomparable intervals of $a\le 1\le 2\le\dots \le k\le b$. So, 
$\big(\{a,1\}, N_1,\dots, N_r, \{n,b\}\big)$ is a lattice path matroid $L'$ with ground
set $[n]\cup\{a,b\}$. To see that $a$ and $1$ are coparallel elements in $L'$
suppose that there is a circuit $C$ that contains $a$ but does not contain $1$. 
Then, $C-a$ is an independent set so, $C-a$ is a partial transversal
of $\big(\{a,1\}, N_1,\dots, N_k, \{n,b\}\big)$. But since $1\not\in C-a$, then 
$C-a$ is a partial transversal of $\big(N_1,\dots, N_k, \{n,b\}\big)$, and thus
$C$ is a partial transversal of $\big(\{a,1\}, N_1,\dots, N_k, \{n,b\}\big)$ which
contradicts the choice of $C$. Analogously we show that 
$n$ and $b$ are coparallel elements in $L'$.
\end{proof}

In particular, this implies that if $M$ is obtained from a uniform matroid
as a series extension of at most two elements, then $M$ is a lattice
path matroid. Also, since edge subdivisions in a graph $G$ correspond
to series extensions in $B(G)$, the following lemma is a particular
case of Lemma~\ref{lem:copar}. Thus, we consider it to be proved.

\begin{lemma}\label{lem:Gsubdivision}
Let $G$ be a graph with a distinguished (colored) edge $e_r$. Suppose
that $B(G)$ is  a lattice path matroid,  with an interval ordering such that $e_r$ 
is a minimal or a maximal element. If   $G'$ is obtained from  subdividing
$e_r$  (into arbitrarily many edges), then $B(G')$ is a lattice path matroid.
\hfill $\square$
\end{lemma}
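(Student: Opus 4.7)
The plan is to derive this lemma directly from Lemma~\ref{lem:copar} via the correspondence between edge subdivisions in $G$ and coparallel extensions in $B(G)$, applied iteratively.

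First I would record the folklore fact that subdividing a single edge $e=uv$ of $G$ into a path of length two through a new degree-two vertex $w$ produces two new edges $e_1,e_2$ that are coparallel in the bicircular matroid of the new graph. The reason is that every bicircular circuit visiting $w$ must traverse both $e_1$ and $e_2$: bicircular circuits are edge sets of subdivisions of a single loop, a pair of loops joined by an edge, a pair of loops based at the same vertex, or a theta-like handcuff; since $w$ has degree two in the new graph, any such subgraph that contains one of $e_1,e_2$ must contain the other. Hence $\{e_1,e_2\}$ is a cocircuit of $B(G')$, which is the definition of $e_1$ and $e_2$ being coparallel.

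Subdividing $e_r$ into $k$ edges amounts to performing $k-1$ successive single-edge subdivisions, and therefore corresponds to adjoining $k-1$ coparallel copies of $e_r$ in $B(G)$ one at a time. Lemma~\ref{lem:copar} as stated extends simultaneously at both endpoints of the interval ordering, but its proof is entirely symmetric in the two endpoints and goes through verbatim if one extends at only one end. Applied with $e_r$ in the role of the minimum (respectively, maximum) element, each application yields a lattice path matroid in which the freshly added coparallel element is the new minimum (respectively, new maximum) of the updated interval ordering. Iterating $k-1$ times produces an interval presentation of $B(G')$, as required.

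The only substantive point in the argument is the identification of edge subdivision with coparallel extension in the bicircular matroid; everything after that is bookkeeping. Since the paper treats this identification as routine, and Lemma~\ref{lem:copar} already handles the matroid-theoretic content, no real obstacle arises.
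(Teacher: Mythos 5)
Your proposal matches the paper's own (very terse) justification: the paper simply declares the lemma a particular case of Lemma~\ref{lem:copar} via the correspondence between edge subdivision and coparallel (series) extension, which is exactly the route you take and then fill in by iterating one end of the interval ordering at a time. One tiny slip worth fixing: a single loop is \emph{not} a circuit of a bicircular matroid (its edge set is independent), but removing it from your list of circuit types does not affect the degree-two argument at the subdivision vertex.
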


\subsection*{\normalfont{\textsc{$2$-sums}}}

Consider a pair of matroids $M_1$ and $M_2$ such that 
$E(M_1)\cap E(M_2) = \{e\}$. The \textit{$2$-sum} of $M_1$ and $M_1$
\textit{over} $e$, is the matroid $M_1\oplus_e M_2$ with ground set
$E(M_1)\cup E(M_2)-e$, whose circuits are defined as follows.
\begin{enumerate}
	\item Every circuit of $M_1-e$ and $M_2-e$ is a circuit
	of  $M_1\oplus_e M_2$.
	\item A set $C \subseteq E(M_1)\cup E(M_2)-e$  is a circuit if
	$C\cap E(M_i) \cup\{e\}$ is a circuit for both $i\in\{1,2\}$.
\end{enumerate}

In general, neither lattice path matroids nor bicircular matroids are closed
under $2$-sums. Nonetheless, there are some particular instances
where the $2$-sum of a pair of bicircular (resp.\ lattice path) matroids
is a bicircular (resp.\ lattice path) matroid.

In particular, consider a pair of lattice path matroids $L_1$ and $L_2$
with interval presentations $(E_1,\mathcal{N})$ and $(E_2,\mathcal{N}')$.
Let $\mathcal{N} = (N_1,\dots, N_k)$,
$\mathcal{N}' = (N_1',\dots, N_l')$, and $e_1 \le \cdots \le e_n$ and
$e'_1 \le \cdots \le e'_m$  be the respective interval orderings of $\mathcal{N}$
and of $\mathcal{N}'$. Suppose that 
$E_1\cap E_2 =\{e\} = \{e_n\} = \{e_1'\}$.
In this case, the $2$-sum $L_1\oplus_e L_2$ is the lattice path matroid
with interval presentation
$(N_1,\dots N_{k-1}, (N_k\cup N'_1)-\{e\}, N_2',\dots,  N_m')$, and interval
ordering
\[
e_1 \le \cdots \le e_{n-1} \le e'_2 \le \cdots e_m'.
\]
The previous assertion is a straightforward observation. It also follows from
Lemma 3.2 in  \cite{boninJCTB100}, and by noticing that the $2$-sum
$M_1\oplus_e M_2$ of a pair of matroids
can be obtained by deleting $e$ from the parallel connection of
$M_1$ and $M_2$ over $e$.

Consider now a pair of graphs $G_1$ and $G_2$ 
such that $E(G_1) \cap E(G_2) = \{e\}$, where $e$ is a loop
incident with $v_1\in V(G_1)$, and incident with $v_2\in V(G_2)$.
If $G$ is the graph obtained by gluing $G_1$ and $G_2$ over the
vertices $v_1$ and $v_2$, and removing both copies of $e$, then
the bicircular matroid $B(G)$ is the $2$-sum $B(G_1)\oplus_e B(G_2)$.
In this case, we say that $G$ is a \textit{loop sum} of $G_1$ and $G_2$. 
Whenever any two loop sums of $G_1$ and $G_2$ are isomorphic
we denote the unique (up to isomorphism) loop sum of $G_1$ and $G_2$
by $G_1\oplus_l G_2$.  The following claim is proved by  the arguments in this
brief subsection.

\begin{lemma}\label{lem:loopsums}
Consider a pair of graphs $G_1$ and $G_2$ such that 
$E(G_1)\cap E(G_2)  = \{e\}$ where $e$ is a loop in $G_1$ and in $G_2$.
Let $G$ be the loop sum of $G_1$ and $G_2$ over $e$ and
suppose that  $B(G_1)$ and $B(G_2)$ are lattice path matroids. If there are
interval orderings of $E(G_1)$ and of $E(G_2)$ such that $e$ is a terminal
element in both, then $B(G)$ is a lattice path matroid. \hfill $\square$
\end{lemma}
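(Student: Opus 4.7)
The plan is to string together the two observations made just before the statement: first that the loop sum operation on graphs corresponds exactly to the $2$-sum on bicircular matroids, and second that the $2$-sum of lattice path matroids preserves the class provided the glueing element is terminal in both interval orderings.

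First, I would spell out that $B(G) = B(G_1) \oplus_e B(G_2)$. This is the content of the paragraph preceding the lemma: by construction $G$ is obtained from $G_1$ and $G_2$ by identifying the loops $e$ at the vertices $v_1,v_2$ and then deleting the resulting loop, which yields precisely the $2$-sum of the bicircular matroids.

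Next, I would arrange the interval orderings so that the explicit $2$-sum formula for lattice path matroids applies. By hypothesis there are interval presentations $(E(G_1),\mathcal{N})$ and $(E(G_2),\mathcal{N}')$ of $B(G_1)$ and $B(G_2)$ in which $e$ is terminal. I would note that reversing an interval ordering again gives an interval ordering of the same transversal matroid, since intervals of a linear order are intervals of the reverse order and the system of presenting sets is unchanged. Therefore, after possibly reversing one or both orderings, I may assume that $e$ is the maximum element of the interval ordering of $\mathcal{N}$ and the minimum element of the interval ordering of $\mathcal{N}'$; that is, in the notation of the subsection, $e_n = e = e'_1$.

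With the orderings aligned this way, the explicit construction recalled above yields a lattice path matroid whose presentation is
\[
\bigl(N_1,\dots,N_{k-1},(N_k\cup N'_1)-\{e\},N'_2,\dots,N'_l\bigr)
\]
with interval ordering $e_1\le\cdots\le e_{n-1}\le e'_2\le\cdots\le e'_m$, and this matroid is $B(G_1)\oplus_e B(G_2) = B(G)$. I do not expect a genuine obstacle here: once the orientation of the two orderings is fixed as above, everything reduces to quoting the earlier observations. The only point that needs a moment of care is the freedom to reverse an interval ordering, and this is immediate from the definition of an interval presentation.
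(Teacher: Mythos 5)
Your proposal is correct and follows essentially the same route as the paper: identify the loop sum with the $2$-sum $B(G_1)\oplus_e B(G_2)$ and then invoke the explicit $2$-sum presentation for lattice path matroids when the glueing element is the maximum of one interval ordering and the minimum of the other. Your extra remark about reversing an interval ordering to align the two terminal positions is a valid (and slightly more careful) handling of a point the paper leaves implicit.
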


\subsection*{\normalfont{\textsc{Clones}}}

To conclude this section we introduce one more notion that will be
useful in Section~\ref{sec:GF}. A pair of elements
$e$ and $f$ in a matroid $M$ are called \textit{clones} if the permutation
of $e$ and $f$ is an automorphism of $M$. For instance, any
pair of elements in a uniform matroid are a pair of clones. 
We say  that a pair  of edges $e$ and $f$ of a graph $G$ are
\textit{clones} if $e$ and $f$ are clones in $B(G)$. 
In particular, any pair of parallel edges are clone edges. 

Consider a matroid $M$ and a set $S\subseteq E$ of clones in $M$.
A \textit{clone extension} of $M$ over $S$, is a matroid $M+f$ such that
$f$ is $S\cup\{f\}$ is a set of clones in $M+f$. In general, 
there might be more than one non-isomorphic clone extensions. 
For instance, if $S = \{e\}$ the parallel and the coparallel extensions
of $e$ are clone extensions of $M$ over $S$.

\begin{lemma}\label{lem:cloneEX}
Let $M$ be a matroid and $S$ a set of clones of $M$.
If $S$ is a circuit of $M$, then there is a unique (up to isomorphism)
clone extension of $M$ over $S$.
\end{lemma}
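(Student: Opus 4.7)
The plan is to prove the stronger statement that the matroid structure on $E(M) \cup \{f\}$ of any clone extension $M+f$ is completely determined by $M$ and $S$. From this, uniqueness up to isomorphism is immediate: given two clone extensions $M+f$ and $M+f'$, the bijection that is the identity on $E(M)$ and sends $f$ to $f'$ would be the desired isomorphism.

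To carry this out, I would first observe that independent sets of $M+f$ not containing $f$ are exactly independent sets of $M$, since $(M+f) \setminus f = M$. The substance is thus to characterize independence of sets $I$ with $f \in I$ in terms of $M$ and $S$ only. The key tool is that for every $e \in S$, the transposition $(e,f)$ is an automorphism of $M+f$ by hypothesis. Because $S$ is a circuit of $M$ and $f \notin S$, the set $S$ remains a circuit of $M+f$, so any independent $I \ni f$ must miss some $e \in S$. For such an $e$, applying the automorphism $(e,f)$ transports $I$ to $(I \setminus \{f\}) \cup \{e\} \subseteq E(M)$, which is independent in $M+f$ (equivalently, in $M$) if and only if $I$ is. This yields the characterization: for $I$ with $f \in I$,
\[
I \in \mathcal{I}(M+f) \iff S \not\subseteq I \text{ and } (I \setminus \{f\}) \cup \{e\} \in \mathcal{I}(M) \text{ for some } e \in S \setminus I.
\]
Since the right-hand side refers only to $M$ and $S$, the claimed uniqueness follows.

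The only subtlety I anticipate is verifying that the condition ``for some $e \in S \setminus I$'' is well posed in the sense of being independent of the choice of $e$. This follows because $S$ is itself a set of clones in $M$, so the transposition $(e, e')$ is an automorphism of $M$ for any $e, e' \in S$, making the independence of $(I \setminus \{f\}) \cup \{e\}$ in $M$ equivalent to that of $(I \setminus \{f\}) \cup \{e'\}$. Once this consistency is in place, the verification that the bijection from the first paragraph preserves independence in both directions is a direct check against the characterization.
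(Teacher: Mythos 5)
Your proposal is correct and follows essentially the same route as the paper: both exploit the transposition $(e,f)$ for some $e\in S$ missed by the set in question, with the hypothesis that $S$ is a circuit guaranteeing such an $e$ exists. The only cosmetic difference is that you characterize independent sets of the extension while the paper verifies directly that the canonical bijection between two extensions maps circuits to circuits.
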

\begin{proof}
Consider a pair of clone extensions $M+e$ and $M+f$
of $M$ over $S$. We verify that $\varphi\colon M+e\to M+f$
defined as the identity on $E$ and $\varphi(e) =f$, is an isomorphism. 
We do so by observing that this function maps circuits to circuits,
and the claim follows by symmetry.  Clearly, every circuit of $M+e$ to
which $e$ does not belong, is mapped to a circuit of $M+f$ to which $f$ does
not belong. Similarly, if $C$ is a circuit of $M+e$ such that $e\in C$
but $s\not\in C$ for some $s\in S$, then $C-e+s$ is a circuit of $M+e$
(because $s$ and $e$ are clones). Since $C-e+s\subseteq E$, then
$C-e+s$ is a circuit of $M+f$, and so, $(C-e+s)-s+f$ is a circuit of
$M+f$, i.e., $\varphi[C] = C-e+f$ is a circuit of $M+f$.
To conclude the proof, we argue that for every circuit $C$ of $M+e$
to which $e$ belongs, there is some $s\in S$ such that $s\not\in C$
(and thus, the claim will follow by the previously considered cases).
Anticipating a contradiction, suppose
that there is some circuit $C$ of $M+e$ such that $S\cup\{e\}\subseteq C$. 
Since $M+e$ is an extension of $M$, then $S$ is a circuit of $M+e$.
Hence, $C$ properly contains another circuit, which contradicts
the fact that circuits are minimal dependent sets.
\end{proof}

Clearly, any bicircular matroid $B(G)$ together with any set of three
parallel edges of $G$, satisfy the hypothesis of
Lemma~\ref{lem:cloneEX}. We use this observation 
to prove the following result.

\begin{lemma}\label{lem:triangles}
Let $G$ be graph and let $f_1$, $f_2$ and $f_3$ be three different parallel
edges. If $B(G)$ is a lattice path matroid, then $B(G+f)$ is a lattice path matroid
whenever $f$ is added to the parallel class of $f_1$, $f_2$ and $f_3$. Moreover,
if $B(G)$ has an interval ordering where $f_1$ is its minimum (resp.\ maximum),
then $B(G+f)$ has an interval ordering where $f_1$ is its minimum
(resp.\ maximum).
\end{lemma}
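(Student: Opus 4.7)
The plan is to apply Lemma~\ref{lem:cloneEX}. First, $\{f_1,f_2,f_3\}$ is a circuit of $B(G)$ by the characterization of bicircular circuits (three parallel edges form the third type listed after the definition of $B(G)$), and it is a set of clones in $B(G)$ because any permutation of three parallel edges extends to a graph automorphism of $G$ that fixes every other edge, hence induces a matroid automorphism of $B(G)$. The same argument shows that $\{f_1,f_2,f_3,f\}$ is a set of clones in $B(G+f)$, so $B(G+f)$ is a clone extension of $B(G)$ over the circuit $\{f_1,f_2,f_3\}$. By Lemma~\ref{lem:cloneEX}, such an extension is unique up to isomorphism, so it suffices to exhibit a lattice path matroid $L'$ that is a clone extension of $B(G)$ over $\{f_1,f_2,f_3\}$ and admits an interval ordering with $f_1$ at the required extreme.

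To construct $L'$, fix an interval presentation $(N_1,\ldots,N_k)$ of $B(G)$ with $f_1$ as the minimum (the maximum case is symmetric). Applying Hall's condition to the circuit $\{f_1,f_2,f_3\}$ shows that at most two intervals $N_i$ meet this set, and a brief case analysis on how $\{f_1,f_2,f_3\}$ distributes among them (using that every pair $\{f_i,f_j\}$ must remain a partial transversal) shows each such interval contains at least two of $f_1,f_2,f_3$. Now insert $f$ immediately after $f_1$ in the interval ordering to obtain $f_1 < f < e_2 < \cdots < e_n$, and define $N'_i := N_i \cup \{f\}$ whenever $|N_i \cap \{f_1,f_2,f_3\}|\ge 2$, and $N'_i := N_i$ otherwise. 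Each $N'_i$ remains an interval of the extended ordering and the collection stays incomparable, so this defines a lattice path matroid $L'$ with $f_1$ as the minimum of its interval ordering.

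The main obstacle is verifying that $L'$ is genuinely a clone extension of $B(G)$ over $\{f_1,f_2,f_3\}$. The equality $L'\setminus f = B(G)$ is immediate from $N'_i\setminus\{f\} = N_i$. For the clone property, observe that by construction $f$ lies in exactly the same intervals of $L'$ as some $f_i$ (the element appearing in both meeting intervals), so the transposition $(f, f_i)$ is a trivial automorphism of the interval presentation and hence of $L'$. Combined with the fact that the pairwise transpositions among $\{f_1,f_2,f_3\}$ remain automorphisms in $L'$ (their action on the new circuits involving $f$ is precisely the expected one given how the intervals distribute the triangle), the transitivity of the clone relation under composition of transpositions yields that $\{f_1,f_2,f_3,f\}$ is a set of clones in $L'$. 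Lemma~\ref{lem:cloneEX} then gives $B(G+f)\cong L'$, proving both the lattice-path conclusion and the interval-ordering claim.
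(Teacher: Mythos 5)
Your overall strategy is the paper's: realise $B(G+f)$ as the unique clone extension of $B(G)$ over the circuit $\{f_1,f_2,f_3\}$ (Lemma~\ref{lem:cloneEX}) and then exhibit one explicit interval presentation of such an extension. Your preliminary analysis is also correct: the triangle meets exactly two intervals $N_a,N_b$, and each of them contains at least two of $f_1,f_2,f_3$. However, there are two genuine gaps. The first is that your construction of $L'$ starts by fixing an interval presentation ``with $f_1$ as the minimum.'' No such presentation need exist: the first sentence of the lemma carries no extremality hypothesis, and the paper applies exactly that unconditional statement to parallel classes that sit in the interior of the ordering (e.g.\ the diagonal class of $G_1(r,d,b)$, whose elements occupy positions $5$--$8$ of $11$ in Figure~\ref{fig:LatG131}). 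So as written you prove only the ``moreover'' clause. The paper's construction sidesteps this by giving $f$ the same neighbourhood as an element of the triangle that already lies in both $N_a$ and $N_b$ and inserting $f$ next to that element, which works wherever the class sits.

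The second gap is the sentence ``Each $N'_i$ remains an interval of the extended ordering,'' which is precisely where the work is and which you do not justify. With $f_1$ the global minimum, $f_1$ lies in $N_a$ only (two incomparable intervals cannot both contain the minimum), so $N_b$ contains $f_2,f_3$ but not $f_1$, and $N_b\cup\{f\}$ is an interval only if $N_b$ begins at the element immediately after $f_1$ --- which is not obvious. It is in fact true, but for a reason you would need to supply: any element $g$ strictly between $f_1$ and the left end of $N_b$ would have neighbourhood exactly $\{N_a\}$ (an interval containing $g$ either contains $f_1$, hence is an initial segment comparable with $N_a$, or is trapped inside $N_a$ because it cannot meet the triangle), so $g$ would be parallel to $f_1$ in $B(G)$; this is impossible because $f_1$ is a non-loop edge and the only $2$-circuits of a bicircular matroid are pairs of loops at a common vertex. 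Without this argument the presentation $L'$ is not verified to be a lattice path matroid. A smaller issue, which the paper's own proof shares, is that the final clone check is only asserted: since $f$ and $f_1$ have different neighbourhoods in your presentation, showing that the transpositions involving $f_1$ remain automorphisms of $L'$ genuinely requires an argument along the lines of the proof of Lemma~\ref{lem:cloneEX}, not just the remark that the new $3$-circuits are ``the expected ones.''
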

\begin{proof}
We begin with the following observation about transversal matroids.
Consider a bipartite graph $H(E,Y)$, and let $T$ be the transversal
matroid  presented by $H$ (with ground set $E$). Let $e_1e_2e_3$ be
a circuit  of $T$. It is evident that
$|N_H(\{e_1,e_2,e_3\})| = 2$. Moreover, at least one of these elements
has degree two in $H$; otherwise, by the pigeonhole principle, two 
of them would be leaves of $H$ with a common support, i.e., 
parallel elements in $T$. Also notice that any two elements
$e,f\in E$ with the same neighborhood in $H$ are clones
in $T$. In particular, any pair of parallel edges in a graph 
are clone elements in its bicircular matroid.

Let $G$, $f_1$, $f_2$, $f_3$ and $f$ be as in the hypothesis.
In particular, $B(G+f)$ is a clone extension of $G$ over
$\{f_1,f_2,f_3\}$.
Let $(E,\mathcal{N})$ be an
interval presentation of $B(G)$ with interval ordering
$e_1\le \cdots \le e_{k-2} \le f_1 \le e_k \le \cdots \le e_n$. By the choice of 
$f_1,f_2$ and $f_3$,
and by the arguments in the previous paragraph, we assume that
$f_1$ belongs to exactly two sets of $\mathcal{N}$. Consider the
presentation $(E\cup\{f\},\mathcal{N}')$, where $f$ belongs to the same
two sets as $f_1$. Then, this is an interval presentation of a clone extension
$B(G)+f$ over $\{f_1,f_2,f_3\}$ where any of the following is an interval ordering
of $B(G)+f$
\[
e_1 \le \cdots \le f_1 \le f  \le e_k \le \cdots \le e_n, \text{ and }
e_1 \le \cdots \le e_{k-2} \le f \le f_1 \le \cdots \le e_n.
\] 
Since $B(G+f)$ and $B(G)+f$ are clone extensions of $B(G)$ over $\{f_1,f_2,f_3\}$,
and $f_1f_2f_3$ is a circuit of $B(G)$, by Lemma~\ref{lem:cloneEX}
we conclude that $B(G+f) \cong B(G)+f$. Thus, $B(G+f)$ is a lattice path matroid.
The moreover statement follows from the previously defined interval orderings.
\end{proof}


\section{Excluded minors}
\label{sec:EM}

In \cite{boninJCTB100}, Bonin exhibits an infinite family of excluded minors for
lattice path matroids.  In this section we introduce eight bicircular excluded
minors for lattice path matroids. Some of these will have structural implications
about those graphs with lattice path bicircular matroids.
We begin by exhibiting the prism $C^{2,4}$ and the four graphs whose bicircular
matroid is isomorphic to this prism.


\begin{figure}[ht!]
\begin{center}

\begin{tikzpicture}[scale=0.8]

\begin{scope}[xshift=-7.5cm, yshift = 2cm, scale=0.8]
\node [vertex] (0) at (-1,0){};
\node [vertex] (1) at (1,0){};
\node [vertex] (c) at (0,0.9){};
\node [vertex] (3) at (0,2.24){};
\node[] at (0,-1){$P_1$};

\foreach \from/\to in {c/0,c/1, c/3}
\draw [edge] (\from) to (\to);

\path[-]         (1) edge [min distance=1cm] (1);
\path[-]         (0) edge [min distance=1cm] (0);
\path[-]         (3) edge [min distance=1cm] (3);

\end{scope}

\begin{scope}[xshift=-2.5cm, yshift = 2cm, scale=0.8]
\node [vertex] (0) at (-1,0){};
\node [vertex] (1) at (1,0){};
\node [vertex] (c) at (0,0.9){};
\node [vertex] (3) at (0,2.24){};
\node[] at (0,-1){$P_2$};

\foreach \from/\to in {c/1,c/0, c/3, 0/c, 1/c, 3/c}
\path[-]          (\from)  edge   [bend left]   (\to);

\end{scope}

\begin{scope}[xshift=-7.5cm, yshift = -2cm, scale=0.8]
\node [vertex] (0) at (-1,0){};
\node [vertex] (1) at (1,0){};
\node [vertex] (c) at (0,0.9){};
\node [vertex] (3) at (0,2.24){};
\node[] at (0,-1){$P_3$};

\foreach \from/\to in {c/0}
\draw [edge] (\from) to (\to);

\path[-]         (0) edge [min distance=1cm] (0);

\foreach \from/\to in {c/1, c/3, 1/c, 3/c}
\path[-]          (\from)  edge   [bend left]   (\to);

\end{scope}

\begin{scope}[xshift=-2.5cm, yshift = -2cm, scale=0.8]
\node [vertex] (0) at (-1,0){};
\node [vertex] (1) at (1,0){};
\node [vertex] (c) at (0,0.9){};
\node [vertex] (3) at (0,2.24){};
\node[] at (0,-1){$P_4$};

\foreach \from/\to in {c/1,c/0}
\draw [edge] (\from) to (\to);

\path[-]         (1) edge [min distance=1cm] (1);
\path[-]         (0) edge [min distance=1cm] (0);

\path[-]          (c)  edge   [bend left]   (3);
\path[-]          (3)  edge   [bend left]   (c);

\end{scope}

\begin{scope}[xshift=5cm, yshift=0cm, scale=0.8]
\node [vertex] (0) at (-1.3,0){};
\node [vertex] (1) at (1.3,0){};
\node [vertex] (3) at (-1.3,3){};
\node [vertex] (4) at (1.3,3){};
\node [vertex] (5) at (0,3.7){};
\node [vertex] (6) at (0,0.7){};
\node[] at (0,-1){$C^{2,4}$};

\foreach \from/\to in {0/1, 1/4, 3/4, 3/0, 5/3, 5/4}
\path[-] (\from) edge (\to);

\foreach \from/\to in {6/0, 6/1, 6/5}
\draw [edge, dotted] (\from) to (\to);

\end{scope}

\end{tikzpicture}

\caption{To the right, an affine representation of $C^{2,4}$. To the left, 
four graphs whose bicircular matroid is isomorphic to $C^{2,4}$.}
\label{fig:C24}
\end{center}
\end{figure}
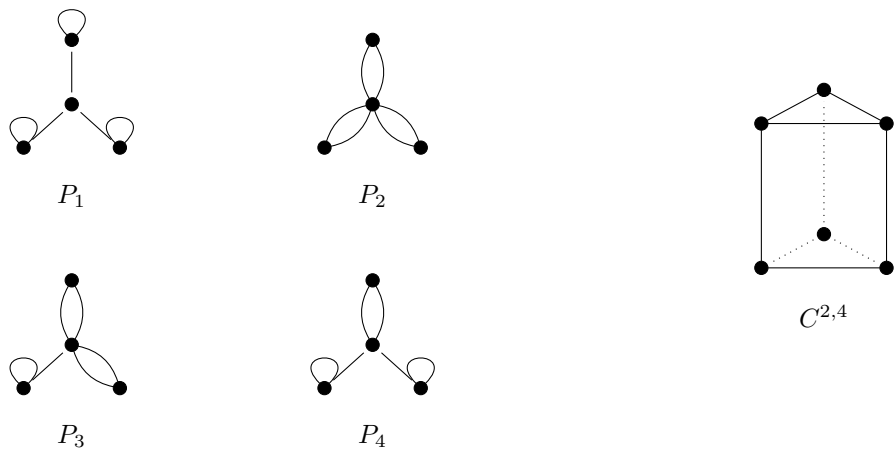

Recall that a \textit{block} of a graph $G$ is a maximal 
$2$-connected subgraph. An \textit{end block} is a block of $G$
that is a leaf in the block tree of $G$; a \textit{middle
block} is a block of $G$ that is not an end block. In particular,
if $B(G)$ is a connected matroid, then every end block of $G$
contains at least one cycle (i.e., $G$ has no loopless leaves). Furthermore, 
suppose that $G$ has three different end blocks. By contracting
all of $G$ except these three end blocks, we can notice
that $G$ contains $P_i$ as a minor for some $i\in\{1,2,3,4\}$
(these graphs are depicted in Figure~\ref{fig:C24}). Therefore, if
$G$ is a graph such that $B(G)$ is a connected matroid with no $C^{2,4}$-minor,
then the block tree of $G$ is a path.

\begin{proposition}\label{prop:blockpath}
Let $G$ be a graph such that $B(G)$ is a connected matroid. 
If $B(G)$ is does not contain $C^{2,4}$ as a minor, then the block
tree of $G$ is a path. Moreover, every middle block contains exactly
two vertices. 
\end{proposition}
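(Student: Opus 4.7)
For the first claim, I would use the argument sketched in the paragraph preceding the proposition: if $G$ has three distinct end blocks $E_1,E_2,E_3$, contract all edges of $G$ outside $E_1\cup E_2\cup E_3$ and then, inside each $E_i$, contract a cycle down until only two edges remain at the attachment vertex (a pair of parallel edges, or equivalently an edge plus a loop). The resulting graph minor is isomorphic to one of $P_1,\dots,P_4$, so $B(G)$ contains $C^{2,4}=B(P_i)$ as a minor, contradicting the hypothesis. Hence the block tree of $G$ has at most two leaves and is a path.

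For the moreover part, I would argue the contrapositive: assuming the block tree is a path $B_1-\cdots-B_k$ and some middle block $B_j$ has $|V(B_j)|\ge 3$, I will exhibit a $C^{2,4}$-minor of $B(G)$. Let $u_j,v_j$ be the two distinct cut vertices of $G$ lying in $B_j$ and pick $w\in V(B_j)\setminus\{u_j,v_j\}$. Since $B_j$ is $2$-connected, $B_j-w$ is connected and thus contains a $u_j$-$v_j$ path $P'$ avoiding $w$; by the fan lemma applied from $w$ to $\{u_j,v_j\}$, there is also a $u_j$-$v_j$ path $P$ through $w$ internally disjoint from $P'$. Form $G'$ by contracting every edge of $P'$ and deleting every edge of $B_j$ not in $P\cup P'$; this identifies $u_j$ and $v_j$ (together with the internal vertices of $P'$) into a single vertex $u'$, and turns $P$ into a cycle $B_j'$ based at $u'$ and passing through $w$. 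In $G'$, three pairwise edge-disjoint $2$-connected subgraphs meet exactly at $u'$, namely the left half containing $B_1,\dots,B_{j-1}$, the right half containing $B_{j+1},\dots,B_k$, and $B_j'$, so $G'$ has at least three end blocks. Applying the first claim to $G'$ produces a $C^{2,4}$-minor of $B(G')$, and hence of $B(G)$, a contradiction.

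The main technical point is verifying that after the contraction, the three subgraphs at $u'$ are genuinely distinct blocks of $G'$: one must check that $B_j'$ is not absorbed into either half, and that removing $u'$ really disconnects $G'$ into at least three components. The key facts behind this are that the connectedness of $B(G)$ forces every end block to be $2$-connected with at least two vertices, so that $B_1\setminus\{u'\}$ and $B_k\setminus\{u'\}$ are non-empty, and that $P$ has length at least $2$ since $w$ is internal, so that $B_j'\setminus\{u'\}$ is also non-empty. Finally, $|V(B_j)|\ge 2$ is automatic because $u_j\ne v_j$, so combined with the upper bound established above, every middle block has exactly two vertices.
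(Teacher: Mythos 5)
Your treatment of the first claim coincides with the paper's. For the ``moreover'' part, your overall strategy --- contract one of two internally disjoint $u_j$--$v_j$ paths so that the other becomes a cycle attached at the merged vertex $u'$, thereby creating a third end block and reducing to the first claim --- is essentially the paper's argument in different packaging (the paper contracts $Q_2$, keeps $Q_1$ as a path of length two, and directly reads off $P_2$, $P_3$ or $P_4$; your re-invocation of the first claim also handles the ``edge plus loop'' end blocks uniformly, which the paper treats as separate cases).

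There is, however, a genuine flaw in one step. You fix $w\in V(B_j)\setminus\{u_j,v_j\}$ in advance and assert that the fan lemma yields a $u_j$--$v_j$ path $P$ through $w$ internally disjoint from a $u_j$--$v_j$ path $P'$ avoiding $w$. The union $P\cup P'$ would then be a cycle through the three prescribed vertices $u_j$, $v_j$, $w$, and $2$-connectedness does not guarantee such a cycle. Concretely, take $B_j=K_{2,3}$ with parts $\{a,b\}$ and $\{u_j,v_j,w\}$: every $u_j$--$v_j$ path through $w$ uses both $a$ and $b$ as internal vertices, while every $u_j$--$v_j$ path avoiding $w$ has $a$ or $b$ as its internal vertex, so no internally disjoint pair of the required kind exists. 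The fan lemma only produces two paths from $w$ to $\{u_j,v_j\}$ sharing $w$; it says nothing about avoiding a previously chosen $P'$. The repair is exactly what the paper does: do not prescribe $w$. Take any two internally disjoint $u_j$--$v_j$ paths (these exist by $2$-connectedness); if both are single edges they are parallel edges, and then fanning any third vertex of $B_j$ to $\{u_j,v_j\}$ replaces one of them by a path with an internal vertex, internally disjoint from the remaining edge. All your construction actually needs is that the retained path have length at least two, so that after contracting the other one you obtain a cycle with at least two edges at $u'$ rather than a single loop. With that change the rest of your reduction goes through: the three end blocks of $G'$ each contain a cycle, so the first claim applies and yields a $C^{2,4}$-minor of $B(G')$, hence of $B(G)$.
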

\begin{proof}
In the paragraph preceding this proposition, we showed
that the block tree of $G$ is a path. This also implies that 
every middle block contains exactly two cut vertices of $G$.
To prove the moreover statement, and anticipating a
contradiction, suppose that there is a middle block
$MB$ of $G$ with at least three vertices. With out loss of generality, 
assume that $MB$ is the unique middle block of $G$; otherwise
contract every middle block that is not $MB$. So, let 
$EB_1$, $MB$ and $EB_2$ be the blocks of $G$. Let $x\in MB\cap EB_1$
and $y\in  MB\cap EB_2$ be the cut vertices of $G$ in $MB$. Since
$B(G)$ is connected, $G$ has no loopless leaves. Thus,
there are two possible scenarios for each end block $EB_i$: Either
there is a cycle in $EB_i$, or $EB_i$ consist of an edge together
with a loop incident with the end vertex. 
We consider the case when $EB_1$ and $EB_2$ contain a cycle.
Since $MB$ is two connected and it is not an edge, there are two internally
disjoint $xy$-paths $Q_1$ and $Q_2$. Furthermore, since $MB$ has at least
three vertices, then we choose $Q_1$ such that it is not an edge. Finally, let
$C_1$ and $C_2$ be a pair of cycles of the end blocks. 
Consider the following contractions of $G$: Contract $Q_1$ to a path of
length $2$; contract $Q_2$; contract $C_1$ to a pair of parallel edges;
and contract $C_2$ to a pair of parallel edges. The previous minor of $G$
is isomorphic to the graph $P_2$ (Figure~\ref{fig:C24}). Thus, $B(G)$ contains
a $C^{2,4}$-minor. When one or both end blocks consist of an edge together
with a loop, we find either of $P_3$ or $P_4$ as a minor of $G$, and the claim
follows by similar arguments.
\end{proof}

Consider the four point line with ground set $\{1,2,3,4\}$, and add
three elements $1'$, $2'$, and $3'$, coparallel to $1$, to $2$ and to $3$,
respectively.  Then, by contracting $4$ we obtain $C^{2,4}$ as a contraction
minor.  Now, consider a uniform matroid $U_{r,n}$ where $2\le r \le n-1$. 
It is not hard to notice, that any four points of $U_{r,n}$ belong to 
a four point line minor of $U_{r,n}$. Therefore, if we add
elements in series to at least three different points of $U_{r,n}$,
we obtain a matroid $M$ with a $C^{2,4}$-minor. This arguments
prove the following statement.

\begin{lemma}\label{lem:uniformextensions}
Let $r$ and $n$ be a pair of non-negative integers and consider the uniform
matroid $U_{r,n}$. If $2\le r \le n-1$, and
$M$ is a series extension of at least three different elements 
of $U_{r,n}$, then $M$ contains  a $C^{2,4}$-minor.
\hfill $\square$
\end{lemma}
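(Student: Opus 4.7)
My plan is to formalize the argument sketched in the paragraph preceding the statement and find $C^{2,4}$ as a minor of $M$ through a direct reduction. Let $e_1,e_2,e_3 \in E(U_{r,n})$ be the three elements at which $M$ has been series-extended, and let $e_1', e_2', e_3'$ be corresponding added coparallel elements, so that $\{e_i,e_i'\}$ is a $2$-element cocircuit of $M$ for each $i\in\{1,2,3\}$. By passing to a further minor of $M$, I may assume $E(M) = E(U_{r,n})\cup\{e_1',e_2',e_3'\}$ with exactly these three coparallel pairs and no further extensions.

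The first step is to pick a fourth element $e_4 \in E(U_{r,n})\setminus\{e_1,e_2,e_3\}$ and carve out a $U_{2,4}$-minor from the ambient $U_{r,n}$ on ground set $\{e_1,e_2,e_3,e_4\}$. In the generic range $r \le n-2$ there are at least $n-4 \ge r-2$ spare elements in $E(U_{r,n})\setminus\{e_1,e_2,e_3,e_4\}$, so the reduction is accomplished by contracting any $r-2$ of them (to drop the rank from $r$ to $2$) and deleting the remaining $n-r-2$ (to drop the ground set to size $4$). Because $U_{r,n}$ is uniform, the outcome is $U_{2,4}$ on $\{e_1,e_2,e_3,e_4\}$.

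Next I would carry out those very same contractions and deletions inside $M$. Since they touch only elements of $E(U_{r,n})\setminus\{e_1,e_2,e_3,e_4\}$, which are disjoint from the series pairs $\{e_i,e_i'\}$, the main obstacle is verifying that the cocircuits $\{e_i,e_i'\}$ actually persist in the resulting minor. This reduces to the general observation that a $2$-element cocircuit $\{a,b\}$ of any matroid $M$ remains a cocircuit of both $M\setminus f$ and $M/f$ whenever $f\notin\{a,b\}$; dually, the $2$-element circuit $\{a,b\}$ of $M^*$ is preserved under the corresponding deletion and contraction of $f$. Equivalently, series extensions commute with deletion and contraction of unaffected elements. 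Granting this, the resulting minor of $M$ has ground set $\{e_1,e_1',e_2,e_2',e_3,e_3',e_4\}$ and is exactly the rank-$5$, $7$-element matroid appearing midway through the construction of $C^{2,4}$ recalled before the statement: $U_{2,4}$ on $\{e_1,e_2,e_3,e_4\}$ together with three coparallel extensions at $e_1$, $e_2$, $e_3$. Contracting $e_4$ in this minor then produces $C^{2,4}$, completing the argument.
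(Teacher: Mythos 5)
Your proof follows essentially the same route as the paper's: the paper's entire argument for this lemma is the informal paragraph preceding it, which you formalize faithfully (realize $U_{2,4}$ on $\{e_1,e_2,e_3,e_4\}$ as a minor of $U_{r,n}$ touching only spare elements, check that the three series pairs survive, then contract $e_4$ to reach $C^{2,4}$). The extra care you take in verifying that the two-element cocircuits $\{e_i,e_i'\}$ persist under deletion and contraction of unaffected elements is correct and is precisely the detail the paper leaves implicit.

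One point deserves attention: you restrict to ``the generic range $r\le n-2$'' and never return to the case $r=n-1$, which the stated hypothesis $2\le r\le n-1$ permits. In fact the lemma fails there: $U_{n-1,n}$ has corank $1$, so every minor of it has corank at most $1$ and there is no four-point-line minor at all; series-extending three elements of $U_{n-1,n}$ simply produces the circuit $U_{n+2,n+3}$, all of whose minors are uniform of corank at most $1$ and hence $C^{2,4}$-free. So the hypothesis your argument actually proves (and the one the lemma should carry) is $2\le r\le n-2$, i.e., rank and corank both at least $2$; this is satisfied in every application in the paper ($U_{4,6}$, $U_{3,6}$, $U_{3,5}$). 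The paper's own sketch shares this blind spot, since its claim that any four points of $U_{r,n}$ lie on a four-point-line minor likewise requires corank at least $2$. It would strengthen your write-up to state this restriction explicitly rather than leave the remaining case silently unaddressed.
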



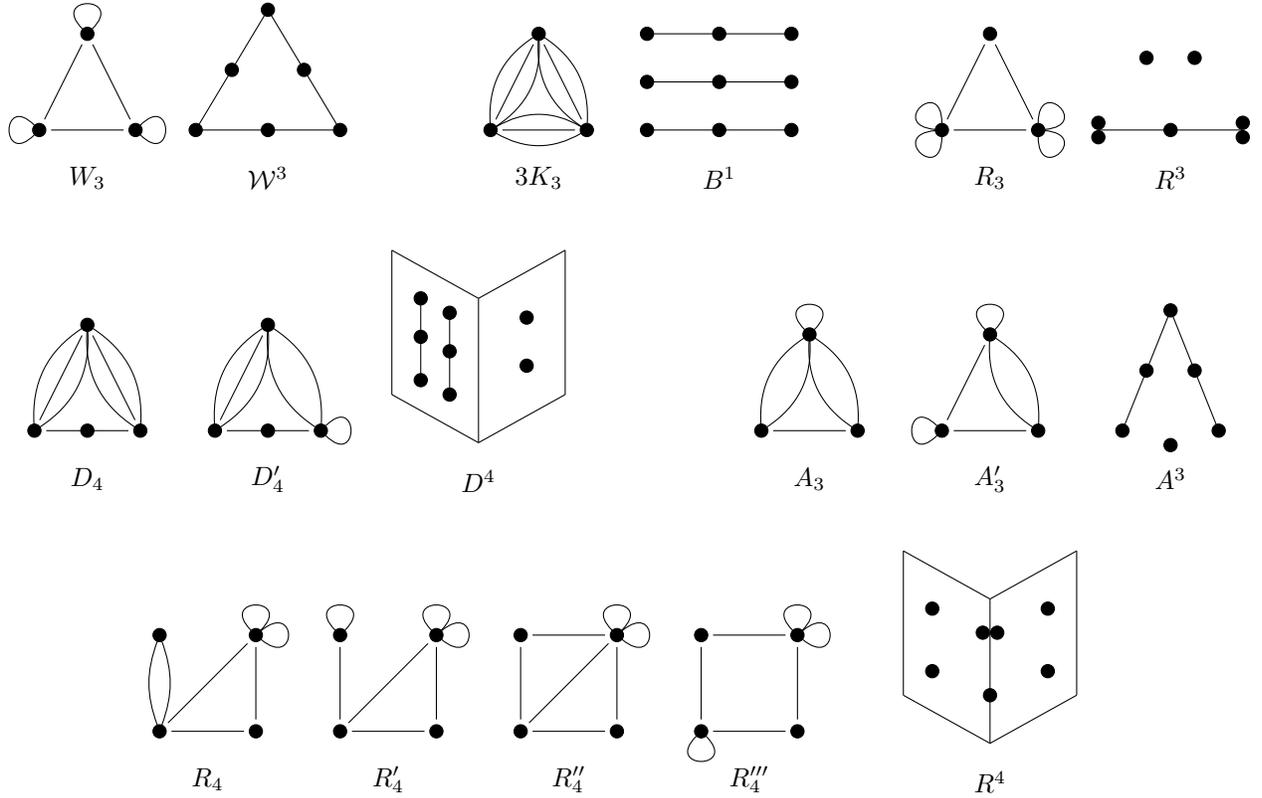
\begin{figure}[ht!]
\begin{center}

\begin{tikzpicture}[scale=0.8]

\begin{scope}[xshift=-1.5cm, yshift=2cm, scale=0.8]
\node [vertex] (0) at (-1,0){};
\node [vertex] (1) at (1,0){};
\node [vertex] (2) at (0,2){};
\node[] at (0,-1){$3K_3$};

\path[-]          (0)  edge   [bend left]   (2);
\path[-]          (0)  edge   [bend right]   (2);
\path[-]          (1)  edge   [bend right]   (2);
\path[-]          (1)  edge   [bend left]   (2);
\path[-]          (1)  edge   [bend right]   (0);
\path[-]          (1)  edge   [bend left]   (0);

\foreach \from/\to in {0/1, 0/2, 1/2}
\draw [edge] (\from) to (\to);
\end{scope}

\begin{scope}[xshift=1.5cm, yshift=2cm, scale=0.8]
\coordinate (c0) at (-1.5,0){};
\coordinate (c1) at (1.5,0){};
\coordinate (d0) at (-1.5,1){};
\coordinate (d1) at (1.5,1){};
\coordinate (i0) at (-1.5,2){};
\coordinate (i1) at (1.5,2){};
\node [vertex] (0) at (-1.5,0){};
\node [vertex] (1) at (1.5,0){};
\node [vertex] (2) at (0,0){};
\node [vertex] (0) at (-1.5,1){};
\node [vertex] (1) at (1.5,1){};
\node [vertex] (2) at (0,1){};
\node [vertex] (0) at (-1.5,2){};
\node [vertex] (1) at (1.5,2){};
\node [vertex] (2) at (0,2){};
\node[] at (0,-1){$B^1$};

\foreach \from/\to in {c0/c1, d0/d1, i0/i1}
\path[-] (\from) edge (\to);
\end{scope}

\begin{scope}[xshift=-9cm, yshift = 2cm, scale=0.8]
\node [vertex] (0) at (-1,0){};
\node [vertex] (1) at (1,0){};
\node [vertex] (3) at (0,2){};
\node[] at (0,-1){$W_3$};

\foreach \from/\to in {1/0, 0/3, 3/1}
\draw [edge] (\from) to (\to);

\path[-]         (1) edge [min distance=1cm, out=45, in=-45] (1);
\path[-]         (0) edge [min distance=1cm, out=225, in=135] (0);
\path[-]         (3) edge [min distance=1cm] (3);

\end{scope}

\begin{scope}[xshift=-6cm, yshift = 2cm, scale=0.8]
\node [vertex] (0) at (-1.5,0){};
\node [vertex] (1) at (1.5,0){};
\node [vertex] (3) at (0,2.5){};
\node [vertex] (01) at (-0.75,1.25){};
\node [vertex] (13) at (0.75,1.25){};
\node [vertex] (30) at (0,0){};
\node[] at (0,-1){$\mathcal{W}^3$};

\foreach \from/\to in {1/0, 0/3, 3/1}
\path[-] (\from) edge (\to);

\end{scope}

\begin{scope}[xshift=9cm, yshift=2cm, scale=0.8]
\coordinate (0) at (-1.5,0){};
\node [vertex] (00) at (-1.5,0.15){};
\node [vertex] (01) at (-1.5,-0.15){};
\coordinate  (1) at (1.5,0){};
\node [vertex] (10) at (1.5,-0.15){};
\node [vertex] (11) at (1.5,0.15){};
\node [vertex] (30) at (-0.5,1.5){};
\node [vertex] (31) at (0.5,1.5){};
\node [vertex] (c) at (0,0){};
\node[] at (0,-1){$R^3$};

\path[-] (0) edge (1);

\end{scope}

\begin{scope}[xshift=6cm, yshift=2cm, scale=0.8]
\node [vertex] (0) at (-1,0){};
\node [vertex] (1) at (1,0){};
\node [vertex] (3) at (0,2){};
\node[] at (0,-1){$R_3$};

\foreach \from/\to in {1/0, 0/3, 3/1}
\draw [edge] (\from) to (\to);

\path[-]         (0) edge [min distance=1cm, out=90, in=175] (0);
\path[-]         (0) edge [min distance=1cm, out=185, in=270] (0);
\path[-]         (1) edge [min distance=1cm, out=90, in=5] (1);
\path[-]         (1) edge [min distance=1cm, out=355, in=270] (1);

\end{scope}

\begin{scope}[xshift=3cm, yshift = -3cm, scale=0.8]
\node [vertex] (0) at (-1,0){};
\node [vertex] (1) at (1,0){};
\node [vertex] (3) at (0,2){};
\node[] at (0,-1){$A_3$};

\foreach \from/\to in {0/1}
\draw [edge] (\from) to (\to);

\path[-]         (3) edge [min distance=1cm] (3);

\foreach \from/\to in {3/1, 1/3, 3/0, 0/3}
\path[-]          (\from)  edge   [bend left]   (\to);

\end{scope}

\begin{scope}[xshift= 6cm, yshift = -3cm, scale=0.8]
\node [vertex] (0) at (-1,0){};
\node [vertex] (1) at (1,0){};
\node [vertex] (3) at (0,2){};
\node[] at (0,-1){$A_3'$};

\foreach \from/\to in {0/1, 0/3}
\draw [edge] (\from) to (\to);

\path[-]         (3) edge [min distance=1cm] (3);
\path[-]         (0) edge [min distance=1cm, out=225, in=135] (0);

\foreach \from/\to in {3/1, 1/3}
\path[-]          (\from)  edge   [bend left]   (\to);
\end{scope}

\begin{scope}[xshift=9cm, yshift=-3cm, scale=0.8]
\node [vertex] (0) at (-1,0){};
\node [vertex] (1) at (1,0){};
\node [vertex] (3) at (0,2.5){};
\node [vertex] (01) at (-0.5,1.25){};
\node [vertex] (13) at (0.5,1.25){};
\node [vertex] (30) at (0,-0.3){};
\node[] at (0,-1){$A^3$};

\foreach \from/\to in {0/3, 3/1}
\path[-] (\from) edge (\to);

\end{scope}

\begin{scope}[xshift=-9cm, yshift = -3cm, scale=0.8]
\node [vertex] (0) at (-1.1,0){};
\node [vertex] (1) at (1.1,0){};
\node [vertex] (3) at (0,2.2){};
\node [vertex] at (0,0){};
\node[] at (0,-1){$D_4$};

\foreach \from/\to in {0/1, 0/3, 1/3}
\draw [edge] (\from) to (\to);

\foreach \from/\to in {3/1, 1/3, 3/0, 0/3}
\path[-]          (\from)  edge   [bend left]   (\to);

\end{scope}

\begin{scope}[xshift=-6cm, yshift = -3cm, scale=0.8]
\node [vertex] (0) at (-1.1,0){};
\node [vertex] (1) at (1.1,0){};
\node [vertex] (3) at (0,2.2){};
\node [vertex] at (0,0){};
\node[] at (0,-1){$D_4'$};

\foreach \from/\to in {0/1, 0/3}
\draw [edge] (\from) to (\to);

\path[-]         (1) edge [min distance=1cm, in = -45, out = 45] (1);

\foreach \from/\to in {3/1, 1/3, 3/0, 0/3}
\path[-]          (\from)  edge   [bend left]   (\to);
\end{scope}

%

\begin{scope}[xshift=-2.5cm, yshift=-3.2cm, scale=0.8]
\coordinate (c0) at (0,0){};
\coordinate (c1) at (0,3){};
\coordinate (d0) at (-1.8,1){};
\coordinate (d1) at (-1.8,4){};
\coordinate (i0) at (1.8,1){};
\coordinate (i1) at (1.8,4){};
\node [vertex] (1) at (-1.2,1.3){};
\node [vertex] at (-1.2,2.2){};
\node [vertex] (2) at (-1.2,3){};
\node [vertex] (11) at (-0.6,1){};
\node [vertex] at (-0.6,1.9){};
\node [vertex] (22) at (-0.6,2.7){};

\node [vertex]  at (1,2.6){};
\node [vertex] at (1,1.6){};
\node[] at (0,-0.8){$D^4$};

\foreach \from/\to in {c0/c1, d0/d1, i0/i1, c0/d0, c0/i0, c1/i1, c1/d1, 1/2, 11/22}
\path[-] (\from) edge (\to);
\end{scope}

\begin{scope}[xshift=-7cm, yshift=-8cm, scale=0.8]
\node [vertex] (0) at (-1,0){};
\node [vertex] (1) at (1,0){};
\node [vertex] (2) at (-1,2){};
\node [vertex] (3) at (1,2){};
\node[] at (0,-1){$R_4$};

\foreach \from/\to in {0/1, 0/3, 1/3}
\draw [edge] (\from) to (\to);

\path[-]          (0)  edge   [bend left =20]   (2);
\path[-]          (0)  edge   [bend right =20]   (2);

\path[-]         (3) edge [min distance=1cm] (3);
\path[-]         (3) edge [min distance=1cm, out=40, in=320] (3);

\end{scope}


\begin{scope}[xshift=-4cm, yshift=-8cm, scale=0.8]
\node [vertex] (0) at (-1,0){};
\node [vertex] (1) at (1,0){};
\node [vertex] (2) at (-1,2){};
\node [vertex] (3) at (1,2){};
\node[] at (0,-1){$R_4'$};

\foreach \from/\to in {0/1, 0/2, 0/3, 1/3}
\draw [edge] (\from) to (\to);

\path[-]         (3) edge [min distance=1cm] (3);
\path[-]         (3) edge [min distance=1cm, out=40, in=320] (3);

\path[-]         (2) edge [min distance=1cm] (2);
\end{scope}


\begin{scope}[xshift=-1cm, yshift=-8cm, scale=0.8]
\node [vertex] (0) at (-1,0){};
\node [vertex] (1) at (1,0){};
\node [vertex] (2) at (-1,2){};
\node [vertex] (3) at (1,2){};
\node[] at (0,-1){$R''_4$};

\foreach \from/\to in {0/1, 0/3, 1/3, 0/2, 2/3}
\draw [edge] (\from) to (\to);

\path[-]         (3) edge [min distance=1cm] (3);
\path[-]         (3) edge [min distance=1cm, out=40, in=320] (3);

\end{scope}

\begin{scope}[xshift=2cm, yshift=-8cm, scale=0.8]
\node [vertex] (0) at (-1,0){};
\node [vertex] (1) at (1,0){};
\node [vertex] (2) at (-1,2){};
\node [vertex] (3) at (1,2){};
\node[] at (0,-1){$R'''_4$};

\foreach \from/\to in {0/1, 1/3, 0/2, 2/3}
\draw [edge] (\from) to (\to);

\path[-]         (3) edge [min distance=1cm] (3);
\path[-]         (3) edge [min distance=1cm, out=40, in=320] (3);
\path[-]         (0) edge [min distance=1cm, out=225, in=315] (0);

\end{scope}


\begin{scope}[xshift=6cm, yshift=-8.2cm, scale=0.8]
\coordinate (c0) at (0,0){};
\coordinate (c1) at (0,3){};
\coordinate (d0) at (-1.8,1){};
\coordinate (d1) at (-1.8,4){};
\coordinate (i0) at (1.8,1){};
\coordinate (i1) at (1.8,4){};
\node [vertex]  at (-0.15,2.3){};
\node [vertex]  at (0.15,2.3){};
\node [vertex] at (0,1){};
\node [vertex] at (-1.2,2.8){};
\node [vertex]  at (1.2,2.8){};
\node [vertex] at (-1.2,1.5){};
\node [vertex] at (1.2,1.5){};
\node[] at (0,-0.8){$R^4$};

\foreach \from/\to in {c0/c1, d0/d1, i0/i1, c0/d0, c0/i0, c1/i1, c1/d1}
\path[-] (\from) edge (\to);
\end{scope}

\end{tikzpicture}

\caption{Six excluded minors to the class of lattice path matroids: $\mathcal{W}^3$,
$B^1$, $R^3$, $D^4$, $A^3$,  and $R^4$. In each case there is an affine
representation, together with all corresponding bicircular presentations.}
\label{fig:bicircularEX}
\end{center}
\end{figure}

The matroids listed in Theorem~\ref{thm:excludedminors} are presented
in Figures~\ref{fig:C24}, \ref{fig:bicircularEX}, and~\ref{fig:BB}. In each
case, we depict a bicircular and an affine representation of these matroids.
With a brief look at Figure 2 in \cite{boninJCTB100}, the reader can realize
that indeed, $C^{2,4}$, $\mathcal{W}^3$, $A^3$, $R^3$, $R^4$, $D^4$ and
$(S^1)^\ast$ are excluded minors for the class of lattice path matroids. 
It is not immediate to conclude from
the description in \cite{boninJCTB100} that $B^1$ is not a lattice path
matroid. Nonetheless,  this can easily be observed using
Proposition~\ref{prop:boninff}.

\begin{proposition}\label{prop:exminors}
Each of the following matroids is a bicircular matroid that is not a lattice
path matroid:
\[
C^{2,4},~\mathcal{W}^3,~A^3,~R^3,~R^4,~D^4,~B^1, \text{ and } S^1.
\]
\end{proposition}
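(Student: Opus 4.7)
The proof splits into two parts. First, I will check that each of the eight matroids is realized as a bicircular matroid; this is immediate from Figures~\ref{fig:C24} and~\ref{fig:bicircularEX}, which already display an explicit graph for each. So the substantive task is ruling out each of them being a lattice path matroid.

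For seven of the eight matroids I plan to appeal to the literature. In Figure~2 of \cite{boninJCTB100}, Bonin lists $C^{2,4}, \mathcal{W}^3, A^3, R^3, R^4, D^4$, and $(S^1)^\ast$ among the excluded minors for the class of lattice path matroids. Since lattice path matroids are closed under duality \cite{boninEJC27}, $S^1$ is also not a lattice path matroid, which together with the other six cases covers everything except $B^1$.

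The only interesting case is $B^1$, which I will handle using Proposition~\ref{prop:boninff}. Take the bicircular presentation $B^1 = B(3K_3)$, where $3K_3$ has vertex set $\{1,2,3\}$ with three parallel edges between every pair; write $L_{12}, L_{13}, L_{23}$ for the three parallel classes, which are pairwise disjoint $3$-element sets corresponding to the three parallel $3$-point lines in the affine picture. First, I will verify that each $L_{ij}$ is a connected rank-$2$ flat of $B^1$: the three parallel edges span a $2$-vertex subgraph with a cycle, so the rank is $2$; the restriction to $L_{ij}$ is $U_{2,3}$, which is connected; and the flat condition holds because any further edge uses a third vertex and therefore raises the rank to $3$. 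Next, to see each $L_{ij}$ is a fundamental flat of $B^1$, I will exhibit a spanning circuit of $B^1$ meeting $L_{ij}$ in a basis: take any two edges from $L_{ij}$ together with any two edges from some other parallel class $L_{ik}$; the resulting $4$-edge subgraph is a pair of $2$-cycles sharing vertex $i$ (a figure eight), which is a circuit of $B^1$ of size $4 = r(B^1)+1$, and meets $L_{ij}$ in exactly two independent elements.

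Finally, suppose toward contradiction that $B^1$ is a lattice path matroid. By Proposition~\ref{prop:boninff}, each of the three fundamental flats $L_{12}, L_{13}, L_{23}$ must be an initial or final segment of the associated interval ordering on $E(B^1)$. But initial segments are totally ordered by inclusion and the $L_{ij}$ are pairwise disjoint and non-empty, so at most one of them can be an initial segment, and the same argument shows at most one can be a final segment. This accommodates at most two of the three, contradicting that all three are fundamental. The main obstacle is really this fundamental-flat verification for $B^1$; once the explicit spanning circuits are in hand, the rest is immediate.
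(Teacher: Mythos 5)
Your proposal is correct and follows essentially the same route as the paper: the bicircular presentations are read off the figures, the seven matroids other than $B^1$ are dispatched by citing Bonin's list of excluded minors in \cite{boninJCTB100} together with duality for $S^1$, and $B^1$ is ruled out by producing three pairwise disjoint fundamental flats and invoking Proposition~\ref{prop:boninff}. Your treatment of $B^1$ is just a more explicit version of the paper's (you exhibit the figure-eight spanning circuits in $B(3K_3)$ and verify the flat conditions directly, where the paper argues with four points no three of which are collinear), but the underlying argument is identical.
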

\begin{proof}
From the corresponding illustration, it is clear that  these matroids are
bicircular matroids. As we already mentioned, with a brief look at
\cite{boninJCTB100}, the reader can realize that indeed, $C^{2,4}$,
$\mathcal{W}^3$, $A^3$, $R^3$, $R^4$, $D^4$ and $(S^1)^\ast$
are excluded minors for the class of lattice path matroids. 
Since lattice path matroids are closed under duality \cite{boninEJC27}, then
$S^1$ is an excluded minor for the class of lattice path matroids.
To see that $B^1$ is not a lattice path matroid
it suffices to notice that their corresponding  fundamental flats
do not form two chains ordered by inclusion (Proposition~\ref{prop:boninff}).
For instance, any four points of $B^1$ such that no three of them belong to a
common line,  are a spanning circuit. Thus, each three point line of $B^1$ is spanned
by some spanning circuit, which implies that each three point
line is a fundamental flat of $B^1$. So, there are three
incomparable fundamental flats of $B^1$, which implies that
the fundamental flats of $B^1$ do not form two disjoint chains
ordered by inclusion. Thus, we conclude that $B^1$ is not a lattice path matroid.
\end{proof}


\begin{figure}[ht!]
\begin{center}

\begin{tikzpicture}[scale=0.8]


\begin{scope}[xshift=-6cm, yshift=-3.5cm, scale=0.9]
\node [vertex] (0) at (-1,0){};
\node [vertex] (1) at (1,0){};
\node [vertex] (2) at (-1,2){};
\node [vertex] (3) at (1,2){};
\node [vertex] (s) at (0,1){};
\node[] at (0,-1){$S_1$};

\foreach \from/\to in {0/1, 3/2, 0/s, s/3}
\draw [edge] (\from) to (\to);

\path[-]          (0)  edge   [bend left]   (2);
\path[-]          (0)  edge   [bend right]   (2);
\path[-]          (1)  edge   [bend right]   (3);
\path[-]          (1)  edge   [bend left]   (3);
\end{scope}

\begin{scope}[xshift=-2.5cm, yshift=-3.5cm, scale=0.9]
\node [vertex] (0) at (-1,0){};
\node [vertex] (1) at (1,0){};
\node [vertex] (2) at (-1,2){};
\node [vertex] (3) at (1,2){};
\node [vertex] (s) at (0,1){};
\node[] at (0,-1){$S_1'$};

\foreach \from/\to in {0/1, 3/2, 0/s, s/3, 0/2}
\draw [edge] (\from) to (\to);

\path[-]          (2)  edge   [min distance = 1cm]   (2);
\path[-]          (1)  edge   [bend right]   (3);
\path[-]          (1)  edge   [bend left]   (3);
\end{scope}

\begin{scope}[xshift=1cm, yshift=-3.5cm, scale=0.9]
\node [vertex] (0) at (-1,0){};
\node [vertex] (1) at (1,0){};
\node [vertex] (2) at (-1,2){};
\node [vertex] (3) at (1,2){};
\node [vertex] (s) at (0,1){};
\node[] at (0,-1){$S_1''$};

\foreach \from/\to in {0/1, 3/2, 0/s, s/3, 0/2, 1/3}
\draw [edge] (\from) to (\to);

\path[-]          (2)  edge   [min distance = 1cm]   (2);
\path[-]          (1)  edge   [min distance = 1cm, out = 225, in = 315]   (1);
\end{scope}


\begin{scope}[xshift=5cm, yshift=-3.3cm, scale=0.9]
\coordinate (c0) at (-1.5,0){};
\coordinate (c1) at (1.5,0){};
\coordinate (i0) at (-1.5,1.5){};
\coordinate (i1) at (1.5,1.5){};
\node [vertex] (0) at (-1.5,0){};
\node [vertex] (1) at (1.5,0){};
\node [vertex] (2) at (0,0){};
\node [vertex] (1) at (0.1,0.75){};
\node [vertex] (2) at (-0.1,0.75){};
\node [vertex] (0) at (-1.5,1.5){};
\node [vertex] (1) at (1.5,1.5){};
\node [vertex] (2) at (0,1.5){};
\node[] at (0,-1.2){$(S^1)^\ast$};

\foreach \from/\to in {c0/c1, i0/i1}
\path[-] (\from) edge (\to);
\end{scope}

\end{tikzpicture}

\caption{To the left, three bicircular presentations of $S^1$. Since
this is a rank $5$ matroid, we  choose to present an affine representation of its
dual $(S^1)^\ast$. $S^1$ is an excluded bicircular minor to lattice path matroids.}
\label{fig:BB}
\end{center}
\end{figure}
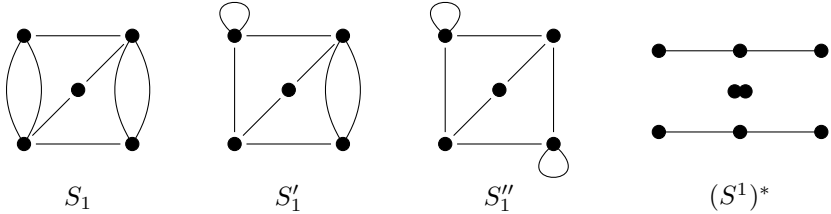


\section{Graph families}
\label{sec:GF}

In this section, we define five graph families $\mathcal{F}_i$ with
$i\in\{0,\dots,4\}$ and we show that the graphs in these
families have lattice path bicircular matroids. 
Throughout this section, we will heavily rely on Lemma~\ref{lem:Gsubdivision}
to argue that subdivisions of certain graphs have  lattice path bicircular matroids. 
In particular, we will use that any series extension
of two elements of a uniform matroid is a lattice path matroid.
We will also rely on Lemma~\ref{lem:triangles}. This lemma asserts that if a
graph $G$ has a  lattice path bicircular matroid,  then $G+f$ has a lattice path
bicircular  matroid whenever $f$ is added to a parallel class that contains three
different edges.

\subsection*{\normalfont{\textsc{Family $\mathcal{F}_0$}}}

We begin with the simplest construction. Consider the graphs
$K'_{2,3}$, $K''_{2,3}$ and $K^\ast_{2,3}$ obtained from $K_{2,3}$ by
adding certain parallel edges or a loop  as follows.

\begin{center}
\begin{tikzpicture}[scale=0.8]

\begin{scope}[xshift=-5cm, yshift=-3.5cm, scale=0.9]
\node [vertex] (0) at (-1,0){};
\node [vertex] (1) at (1,0){};
\node [vertex] (2) at (-1,2){};
\node [vertex] (3) at (1,2){};
\node [vertex] (s) at (0,1){};
\node[] at (0,-1){$K'_{2,3}$};

\draw [edge] (2) to (3);

\foreach \from/\to in {0/1, 1/3}
\draw [edge, blue] (\from) to (\to);

\foreach \from/\to in { 0/s, s/3}
\draw [edge, red] (\from) to (\to);

\path[-]          (0)  edge   [bend left]   (2);
\path[-]          (0)  edge   [bend right]   (2);
\end{scope}

\begin{scope}[xshift=0cm, yshift=-3.5cm, scale=0.9]
\node [vertex] (0) at (-1,0){};
\node [vertex] (1) at (1,0){};
\node [vertex] (2) at (-1,2){};
\node [vertex] (3) at (1,2){};
\node [vertex] (s) at (0,1){};
\node[] at (0,-1){$K''_{2,3}$};

\foreach \from/\to in {0/2, 2/3}
\draw [edge] (\from) to (\to);

\foreach \from/\to in {0/1, 1/3}
\draw [edge, blue] (\from) to (\to);

\foreach \from/\to in { 0/s, s/3}
\draw [edge, red] (\from) to (\to);

\path[-]          (2)  edge   [min distance = 1cm]   (2);
\end{scope}

\begin{scope}[xshift=5cm, yshift=-3.5cm, scale=0.9]
\node [vertex] (0) at (-1,0){};
\node [vertex] (1) at (1,0){};
\node [vertex] (2) at (-1,2){};
\node [vertex] (3) at (1,2){};
\node [vertex] (s) at (0,1){};
\node[] at (0,-1){$K^\ast_{2,3}$};

\foreach \from/\to in {0/1, 1/3}
\draw [edge, blue] (\from) to (\to);

\foreach \from/\to in {0/s, s/3}
\draw [edge, red] (\from) to (\to);

\path[-]          (0)  edge   [bend left]   (2);
\path[-]          (0)  edge   [bend right]   (2);
\path[-]          (3)  edge   [bend left]   (2);
\path[-]          (3)  edge   [bend right]   (2);
\end{scope}
\end{tikzpicture}
\end{center}
For the sake of clarity in the upcoming arguments, we distinguish
some edges (red and blue) of $K'_{2,3}$, of $K''_{2,3}$ and of $K^\ast_{2,3}$
as depicted above (we will use this technique through the rest of this work).
A graph $G$ belongs to $\mathcal{F}_0$ if either of the following hold:
\begin{enumerate}
	\item $G$ is a subdivision of $K_{2,3}$,
	\item $G$ is a subdivision of blue and/or red edges of $K'_{2,3}$ of
	$K''_{2,3}$ or of $K^\ast_{2,3}$, or
	\item $G$ is a subdivision of at most two edges of $K_4$.
\end{enumerate}

In particular, notice that any subdivision of $K_{2,3}$ is a theta graph. 
Thus, $B(G)$ is the uniform matroid of rank $|E(G)|-1$ and $|E(G)|$ elements. 
Clearly then, $B(G)$ is a lattice path matroid. Also, it is not hard to notice
that the bicircular matroid of $K_4$ is $U_{4,6}$. Thus, by
Lemma~\ref{lem:Gsubdivision}, the bicircular matroid of any subdivision $G'$
of any two edges of $K_4$  is a lattice path matroid.

\begin{proposition}\label{prop:familyF0}
The bicircular matroid of each graph in $\mathcal{F}_0$ is a lattice path matroid.
\end{proposition}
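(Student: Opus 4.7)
The plan is to split the argument according to the three defining cases of $\mathcal{F}_0$ and to address them separately. For Case (1), any subdivision of $K_{2,3}$ is a theta graph whose bicircular matroid is the uniform matroid $U_{|E(G)|-1,|E(G)|}$, hence a lattice path matroid. For Case (3), since $B(K_4)\cong U_{4,6}$ is uniform, every linear ordering of its six elements is an interval ordering; I place the two chosen edges $e,f$ of $K_4$ at the minimum and maximum of the ordering, then apply Lemma~\ref{lem:Gsubdivision} to subdivide $e$ (noting that, by iterated Lemma~\ref{lem:copar}, the subdivisions of $e$ accumulate at the minimum while $f$ remains the maximum) and then again to subdivide $f$.

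For Case (2), the plan is to exhibit, for each of the three base graphs $K'_{2,3}$, $K''_{2,3}$, $K^\ast_{2,3}$, an explicit interval presentation of its bicircular matroid in which the two blue edges occupy initial positions of the interval ordering and the two red edges occupy final positions. Concretely, for $K'_{2,3}$, label its edges as $b_1,b_2$ (the blue pair), $r_1,r_2$ (the red pair), and $e_1,e_2,e_3$ (the two parallel edges and the remaining black edge); I then propose the ordering $b_1<b_2<e_1<e_2<e_3<r_1<r_2$ together with the five intervals $\{b_1,b_2\}$, $\{b_2,e_1,e_2\}$, $\{e_1,e_2,e_3\}$, $\{e_2,e_3,r_1\}$, $\{r_1,r_2\}$, and verify this presents $B(K'_{2,3})$ by checking that the only $5$-subsets of the ground set lacking a system of distinct representatives are precisely the two size-$5$ bicircular circuits of $K'_{2,3}$. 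Analogous presentations for $K''_{2,3}$ and $K^\ast_{2,3}$ are constructed similarly, with small adjustments to accommodate the loop in $K''_{2,3}$ and the extra parallel pair in $K^\ast_{2,3}$.

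Given the interval presentations, I subdivide colored edges iteratively using Lemma~\ref{lem:Gsubdivision}. After subdividing, say, $b_1$, every internal vertex of the resulting blue subpath has degree $2$, so any bicircular circuit meeting one edge of the subpath must traverse all of its edges; consequently the subpath's edges are pairwise clones in the new bicircular matroid. Hence they can be permuted freely in the interval ordering, which lets me move any desired blue edge (in particular, the original $b_2$) to the minimum before applying Lemma~\ref{lem:Gsubdivision} again. A symmetric argument using the maximum end of the ordering handles the red path. Iterating yields arbitrary subdivisions of the colored edges.

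The main obstacle is the verification of the three interval presentations in Case (2); this is a mechanical but case-sensitive computation that must check both the interval structure in the proposed ordering and the equality of the induced transversal matroid with the bicircular one. Once the presentations are in place, the subdivision step is uniform across the three base graphs and depends only on the clone property of edges along subdivided paths together with Lemma~\ref{lem:Gsubdivision}.
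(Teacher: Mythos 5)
Your proposal is correct and follows essentially the same route as the paper: explicit interval presentations of the base graphs with the colored edges at the extremes of the interval ordering, followed by Lemma~\ref{lem:Gsubdivision} (coparallel extension) to handle the subdivisions. Your presentation of $B(K'_{2,3})$ checks out (the only dependent $5$-sets of the proposed transversal matroid are $\{b_1,b_2,e_1,e_2,e_3\}$ and $\{e_1,e_2,e_3,r_1,r_2\}$, matching the two $5$-circuits of the bicircular matroid), and the only cosmetic difference is that the paper dispatches $K^\ast_{2,3}$ by recognizing $B(K^\ast_{2,3})$ as a two-element series extension of $U_{3,6}$ and handles $K''_{2,3}$ via a color-preserving isomorphism with $K'_{2,3}$, rather than by constructing fresh presentations.
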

\begin{proof}
Given the arguments above, we only need to show that
when $G$ is a subdivision of blue and red edges of $K'_{2,3}$,
of $K''_{2,3}$, or of $K^\ast_{2,3}$
then $B(G)$ is a lattice path matroid. The last follows by noticing that
the bicircular matroid $B(K^\ast_{2,3})$ is a series extension
of $U_{3,6}$. Indeed, if we contract each pair of red and blue edges
to a single red and blue edge, respectively, we obtain a graph
$2K_3$ whose bicircular matroid is $U_{3,6}$. Thus, the bicircular matroid
of any subdivision $G$ of red and blue edges of $K^\ast_{2,3}$ is a series
extension of two elements of $U_{3,6}$. By Lemma~\ref{lem:Gsubdivision},
we conclude that $B(G)$ is a lattice path matroid.

It is not hard to notice that there is a color preserving isomorphism from
$B(K'_{2,3})$ to $B(K''_{2,3})$. So, to conclude this proof, it
suffices to consider the case when $G$ is a subdivision of $K'_{2,3}$.
Now, we propose the following lattice path presentation of the
bicircular matroid of $K'_{2,3}$.
\begin{center}
\begin{tikzpicture}[scale=0.8]

\begin{scope}[xshift=-3cm, scale=0.9]
\node [vertex] (0) at (-1,0){};
\node [vertex] (1) at (1,0){};
\node [vertex] (2) at (-1,2){};
\node [vertex] (3) at (1,2){};
\node [vertex] (s) at (0,1){};
\node[] at (0,-1){$K'_{2,3}$};

\draw [edge] (2) to (3);

\foreach \from/\to in {0/1, 1/3}
\draw [edge, blue] (\from) to (\to);

\foreach \from/\to in { 0/s, s/3}
\draw [edge, red] (\from) to (\to);

\path[-]          (0)  edge   [bend left]   (2);
\path[-]          (0)  edge   [bend right]   (2);
\end{scope}
\begin{scope}[xshift=3cm, scale = 0.8]
\draw[step=1cm] (-1,0) grid (0,3);
\draw[step=1cm] (0,2) grid (1,5);
\node[] at (0,-1.2){\small{$L$}};

\color{red}
\node[] at (-1.3,0.5){\scriptsize{$1$}};
\node[] at (-1.3,1.5){\scriptsize{$2$}};

\node[] at (-0.5,-0.3){\scriptsize{$1$}};
\node[] at (0.3,0.5){\scriptsize{$2$}};

\color{black}
\node[] at (-1.3,2.5){\scriptsize{$3$}};
\node[] at (-0.3, 3.5){\scriptsize{$5$}};

\node[] at (0.3,1.5){\scriptsize{$3$}};
\node[] at (1.3,2.5){\scriptsize{$5$}};

\color{blue}
\node[] at (1.3,3.5){\scriptsize{$6$}};
\node[] at (1.3,4.5){\scriptsize{$7$}};

\node[] at (-0.3,4.5){\scriptsize{$6$}};

\color{black}
\end{scope}

\end{tikzpicture}
\end{center}
It is not hard to notice that any color preserving labeling of the edges
of $K'_{2,3}$ with $\{1,\dots, 7\}$, yields an isomorphism
from $B(K'_{2,3})$ to the lattice path matroid $L$. Moreover, this isomorphism
defines an interval ordering of $E(K'_{2,3})$, where the minimum
is a red edge, and the maximum is a blue edge. Thus, by
Lemma~\ref{lem:Gsubdivision}, if $G$ is obtained from $K'_{2,3}$ by 
subdividing any blue and any red edge, then $B(G)$ is a lattice path matroid. 
Therefore, the statement of this proposition holds true.
\end{proof}

\subsection*{\normalfont{\textsc{Family $\mathcal{F}_1$}}}

Our most basic building block for the family $\mathcal{F}_1$
is the graph $G_1$, obtained from the $4$ cycle
by duplicating two non consecutive edges. 
We distinguish the parallel edges by thinking of one class as blue
edges, and the other as red edges.
Given three non-negative integers $r$, $b$ and $d$,
we denote the graph $G_1(r,d,b)$ obtained from $G_1$ by adding
$r$ red parallel edges (to the red edges), $b$ blue parallel edges
(to the blue edges), and $d$ parallel diagonal edges.
From left to right,  the following is a depiction of $G_1 = G_1(0,0,0)$, of $G_1(1,3,1)$,
and $G(r,d,b)$, where the dashed edges represent $r$, $d$ and $b$ parallel copies.
\begin{center}
\begin{tikzpicture}[scale=0.8]

\begin{scope}[xshift=-6cm, yshift=0cm, scale=0.9]
\node [vertex] (0) at (-1,0){};
\node [vertex] (1) at (1,0){};
\node [vertex] (2) at (-1,2){};
\node [vertex] (3) at (1,2){};
\node[] at (0,-0.7){\small{$G_1$}};

\path[-]          (3)  edge    (2);
\path[-]          (0)  edge     (1);
\path[-, red]          (0)  edge   [bend left]   (2);
\path[-, red]          (0)  edge   [bend right]   (2);
\path[-, blue]          (1)  edge   [bend right]   (3);
\path[-, blue]          (1)  edge   [bend left]   (3);
\end{scope}
\begin{scope}[xshift=0cm, yshift=0cm, scale=0.9]
\node [vertex] (0) at (-1,0){};
\node [vertex] (1) at (1,0){};
\node [vertex] (2) at (-1,2){};
\node [vertex] (3) at (1,2){};
\node[] at (0,-0.7){\small{$G_1(1,3,1)$}};

\foreach \from/\to in {3/2, 0/1}
\draw [edge] (\from) to (\to);

\path[-]          (0)  edge   [bend left = 20]   (3);
\path[-]          (0)  edge   [bend right = 20]   (3);
\draw [edge] (0) to (3);

\path[-, red]          (0)  edge   [bend left]   (2);
\path[-, red]          (0)  edge   [bend right]   (2);
\draw [edge, red] (0) to (2);

\path[-, blue]          (1)  edge   [bend right]   (3);
\path[-, blue]          (1)  edge   [bend left]   (3);
\draw [edge, blue] (1) to (3);
\end{scope}

\begin{scope}[xshift=6cm, yshift=0cm, scale=0.9]
\node [vertex] (0) at (-1,0){};
\node [vertex] (1) at (1,0){};
\node [vertex] (2) at (-1,2){};
\node [vertex] (3) at (1,2){};
\node[] at (0,-0.7){\small{$G_1(r,d,b)$}};

\foreach \from/\to in {3/2, 0/1}
\draw [edge] (\from) to (\to);

\draw [edge, dashed] (0) to (3);

\path[-, red]          (0)  edge   [bend left]   (2);
\path[-, red]          (0)  edge   [bend right]   (2);
\draw [edge, dashed, red] (0) to (2);

\path[-, blue]          (1)  edge   [bend right]   (3);
\path[-, blue]          (1)  edge   [bend left]   (3);
\draw [edge, dashed, blue] (1) to (3);
\end{scope}

\end{tikzpicture}
\end{center}

The colors on the edges of $G_1$ are useful to define the family $G_1(r,d,b)$. 
This colors are also convenient to define the family $\mathcal{F}_1$,
but in this case, we extend this coloring when either of $r$, $d$, or $b$
equal $0$.  In Figure~\ref{fig:H1}, we illustrate these edge colorings.
With these figures in mind, we define the family $\mathcal{F}_1$.
A graph $G$ belongs to $\mathcal{F}_1$ if the following holds:
\begin{enumerate}
	\item $G$ is a subdivision of
		at most one red edge and at most one blue edge of
		$G(r,d,b)$ for some non-negative integers $r$, $d$ and $b$
		(see Figure~\ref{fig:H1}).
\end{enumerate}

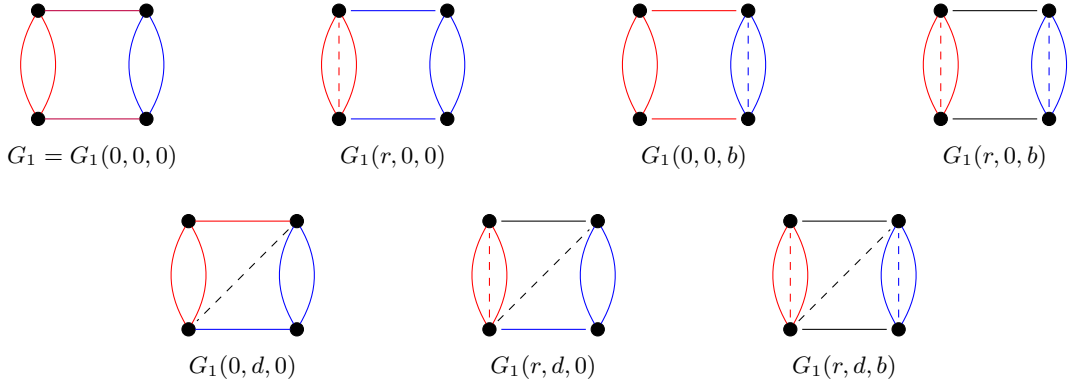
\begin{figure}[ht!]
\begin{center}

\begin{tikzpicture}[scale=0.8]

\begin{scope}[xshift=-7.5cm, yshift=3.5cm, scale=0.9]
\node [vertex] (0) at (-1,0){};
\node [vertex] (1) at (1,0){};
\node [vertex] (2) at (-1,2){};
\node [vertex] (3) at (1,2){};
\node[] at (0,-0.7){\small{$G_1 = G_1(0,0,0)$}};

\path[-, purple]       (3)  edge    (2);
\path[-, purple]         (0)  edge     (1);
\path[-, red]         (0)  edge   [bend left]   (2);
\path[-, red]          (0)  edge   [bend right]   (2);
\path[-, blue]          (1)  edge   [bend right]   (3);
\path[-, blue]         (1)  edge   [bend left]   (3);
\end{scope}

\begin{scope}[xshift=-5cm, yshift=0cm, scale=0.9]
\node [vertex] (0) at (-1,0){};
\node [vertex] (1) at (1,0){};
\node [vertex] (2) at (-1,2){};
\node [vertex] (3) at (1,2){};
\node[] at (0,-0.7){\small{$G_1(0,d,0)$}};

\foreach \from/\to in {3/0}
\draw [edge, dashed] (\from) to (\to);

\path[-, red]          (3)  edge    (2);
\path[-, blue]          (0)  edge     (1);
\path[-, red]          (0)  edge   [bend left]   (2);
\path[-, red]          (0)  edge   [bend right]   (2);
\path[-, blue]          (1)  edge   [bend right]   (3);
\path[-, blue]          (1)  edge   [bend left]   (3);
\end{scope}

\begin{scope}[xshift=-2.5cm, yshift=3.5cm, scale=0.9]

\node [vertex] (0) at (-1,0){};
\node [vertex] (1) at (1,0){};
\node [vertex] (2) at (-1,2){};
\node [vertex] (3) at (1,2){};
\node[] at (0,-0.7){\small{$G_1(r,0,0)$}};

\foreach \from/\to in {0/1,3/2}
\draw [edge, blue] (\from) to (\to);

\path[-, red]          (0)  edge   [bend left]   (2);
\path[-, red]          (0)  edge   [bend right]   (2);
\draw [edge, red, dashed] (0) to (2);

\path[-, blue]          (1)  edge   [bend right]     (3);
\path[-, blue]          (1)  edge   [bend left]    (3);
\end{scope}

\begin{scope}[xshift=0cm, yshift=0cm, scale=0.9]
\node [vertex] (0) at (-1,0){};
\node [vertex] (1) at (1,0){};
\node [vertex] (2) at (-1,2){};
\node [vertex] (3) at (1,2){};
\node[] at (0,-0.7){\small{$G_1(r,d,0)$}};

\draw [edge] (3) to (2);
\draw [edge, dashed] (0) to (3);

\foreach \from/\to in {0/1}
\draw [edge, blue] (\from) to (\to);

\path[-, red]          (0)  edge   [bend left]   (2);
\path[-, red]          (0)  edge   [bend right]   (2);
\draw [edge, red, dashed] (0) to (2);

\path[-, blue]          (1)  edge   [bend right]     (3);
\path[-, blue]          (1)  edge   [bend left]    (3);
\end{scope}

\begin{scope}[xshift=2.5cm, yshift=3.5cm, scale=0.9]
\node [vertex] (0) at (-1,0){};
\node [vertex] (1) at (1,0){};
\node [vertex] (2) at (-1,2){};
\node [vertex] (3) at (1,2){};
\node[] at (0,-0.7){\small{$G_1(0,0,b)$}};

\foreach \from/\to in {3/2, 0/1}
\draw [edge, red] (\from) to (\to);

\path[-, red]          (0)  edge   [bend left]   (2);
\path[-, red]          (0)  edge   [bend right]   (2);

\path[-, blue]          (1)  edge   [bend right]   (3);
\path[-, blue]          (1)  edge   [bend left]   (3);
\draw [edge, dashed, blue] (1) to (3);
\end{scope}

\begin{scope}[xshift=7.5cm, yshift=3.5cm, scale=0.9]
\node [vertex] (0) at (-1,0){};
\node [vertex] (1) at (1,0){};
\node [vertex] (2) at (-1,2){};
\node [vertex] (3) at (1,2){};
\node[] at (0,-0.7){\small{$G_1(r,0,b)$}};

\foreach \from/\to in {3/2, 0/1}
\draw [edge] (\from) to (\to);

\path[-, red]          (0)  edge   [bend left]   (2);
\path[-, red]          (0)  edge   [bend right]   (2);
\draw [edge, dashed, red] (0) to (2);

\path[-, blue]          (1)  edge   [bend right]   (3);
\path[-, blue]          (1)  edge   [bend left]   (3);
\draw [edge, dashed, blue] (1) to (3);
\end{scope}

%
\begin{scope}[xshift=5cm, yshift=0cm, scale=0.9]
\node [vertex] (0) at (-1,0){};
\node [vertex] (1) at (1,0){};
\node [vertex] (2) at (-1,2){};
\node [vertex] (3) at (1,2){};

\node[] at (0,-0.7){\small{$G_1(r,d,b)$}};

\foreach \from/\to in {3/2, 0/1}
\draw [edge] (\from) to (\to);
\draw [edge, dashed] (0) to (3);

\path[-, red]          (0)  edge   [bend left]   (2);
\path[-, red]          (0)  edge   [bend right]   (2);
\draw [edge, dashed, red] (0) to (2);

\path[-, blue]          (1)  edge   [bend right]   (3);
\path[-, blue]          (1)  edge   [bend left]   (3);
\draw [edge, dashed, blue] (1) to (3);
\end{scope}

\end{tikzpicture}
\caption{The generating set of graphs for $\mathcal{F}_1$.
Each dashed edge represents $r$, $b$ and $d$ red, blue  and diagonal
parallel edges,  respectively. Non-bended edges of $G_1$ are both blue and red.}
\label{fig:H1}
\end{center}
\end{figure}

Notice that $G_1(r,d,b)$ is obtained by adding an edge to a parallel
class to either of $G_1(r-1,d,b)$, $G_1(r,d-1,b)$ or $G_1(r,d,b-1)$. 
Thus, by Lemma~\ref{lem:triangles}, we conclude that if $G_1(1,3,1)$ has a
lattice path bicircular matroid, then  $G_1(r,d,b)$  has a lattice path bicircular
matroid for every $r,b\ge 1$ and $d\ge 3$. Furthermore, it is obvious that
if $G_1(r,d,b)$ has a lattice path  bicircular matroid, then $G_1(r',d',b')$
has a lattice path bicircular matroid for any $r'\le r$, $d'\le d$ and $b'\le b$. 
Both of these observations together, show that if $G_1(1,3,1)$ has a
lattice path bicircular matroid, then $G_1(r,d,b)$ has a lattice path bicircular matroid
for all non-negative integers $r$, $d$ and $b$.

\begin{lemma}\label{lem:Grdb}
For all non-negative integers $r$, $d$ and $b$,  the graph $G_1(r,d,b)$  has a
lattice path bicircular matroid, with an interval ordering where the minimum
(resp.\ maximum) element is a red (resp.\ blue) edge.
\end{lemma}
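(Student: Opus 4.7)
My plan splits into two parts. First, the reduction outlined in the paragraph just before the lemma: from an interval presentation of $B(G_1(1,3,1))$ with red minimum and blue maximum, iterated application of Lemma~\ref{lem:triangles} to the three size-three parallel classes in $G_1(1,3,1)$ produces interval presentations for all $G_1(r,d,b)$ with $r\ge 1$, $d\ge 3$, $b\ge 1$, preserving the endpoint color condition. Conversely, deleting a non-minimum red edge, a non-maximum blue edge, or any single diagonal corresponds to restricting the interval presentation (remove the element from the linear order and from every interval containing it) and still preserves the endpoint colors, so it handles the remaining cases $r\in\{0\}$, $b\in\{0\}$, and $d\in\{0,1,2\}$. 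Everything therefore reduces to realizing $B(G_1(1,3,1))$ as a lattice path matroid with red minimum and blue maximum.

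For this base case, $G_1(1,3,1)$ is connected on four vertices and contains cycles, so $B(G_1(1,3,1))$ has rank $4$ on eleven elements, partitioned into the parallel triples $\{r_1,r_2,r_3\}$, $\{d_1,d_2,d_3\}$, $\{b_1,b_2,b_3\}$ (each a $3$-circuit of theta type) together with the two singletons $t$ and $u$. I would linearly order the eleven edges placing the red triple at the beginning and the blue triple at the end, with $t$, $u$, and the diagonal triple interleaved in the middle, and then exhibit four incomparable intervals of this ordering defining a transversal matroid equal to $B(G_1(1,3,1))$. Verification amounts to matching the small circuits: the three parallel-triple $3$-circuits, the bowtie families $\{r_i,r_j,d_k,d_l\}$ (meeting at the bottom-left vertex) and $\{b_i,b_j,d_k,d_l\}$ (meeting at the top-right vertex), and the $4$-element theta families $\{r_i,r_j,d_k,t\}$, $\{d_k,d_l,r_i,t\}$, $\{b_i,b_j,d_k,u\}$, $\{d_k,d_l,b_i,u\}$; by rank considerations these determine the whole matroid.

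The main obstacle is guessing the four intervals. The obvious candidate, the four vertex-neighborhood sets of the standard bicircular presentation of $B(G_1(1,3,1))$, cannot be realized simultaneously as intervals in any linear ordering of the eleven edges: the edge $t$ would need to be adjacent to both the red block and the diagonal block while $u$ would need to be adjacent to both the blue block and the diagonal block, and these demands are incompatible. I will therefore use a different but transversally equivalent presentation of the same matroid, obtained by suitably enlarging the sets, and choose the ordering so that the enlarged sets are intervals. Once the presentation is pinned down and the circuit matching is verified, the reduction in the first paragraph completes the proof.
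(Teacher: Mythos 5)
Your reduction to the single base case $G_1(1,3,1)$ is exactly the paper's argument: Lemma~\ref{lem:triangles} pushes the presentation up to all $r,b\ge 1$, $d\ge 3$ while preserving the colors of the extremal elements, and deletion (restriction of the interval presentation) handles the remaining parameters. Your observation that the standard bicircular presentation of $B(G_1(1,3,1))$ --- the four vertex-neighborhood sets --- cannot all be made intervals in any linear order is also correct and shows you understand where the difficulty lies: the sets $N_{v_0}$ and $N_{v_3}$ force the diagonal triple to sit between the red-plus-$u$ block and the blue-plus-$t$ block, after which $\{r_1,r_2,r_3,t\}$ cannot be contiguous.

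The gap is that you never produce the four intervals, and that is the entire substantive content of the lemma. Saying you ``will use a different but transversally equivalent presentation, obtained by suitably enlarging the sets'' identifies the right strategy but does not carry it out: you have not shown that such an enlargement exists, that the enlarged sets are pairwise incomparable, that they can be realized as intervals with a red edge first and a blue edge last, or that the resulting transversal matroid has the circuit list you enumerate. The paper resolves this by exhibiting an explicit lattice path diagram (Figure~\ref{fig:LatG131}) whose standard presentation is checked against $B(G_1(1,3,1))$ by matching the circuits of size at most $4$ and the bases. Your closing criterion --- that agreement on circuits of size at most the rank determines the matroid --- is sound, but until the presentation is written down there is nothing to check it against. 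As it stands the proof establishes the easy half (the induction) and leaves the hard half (the base case) as a plan.
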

\begin{proof}
Suppose that $B(G_1(1,3,1))$  is a lattice path matroid. By the arguments preceding
this paragraph,  we know that $B(G_1(r,d,b))$ is a lattice path matroid
for all non-negative integers $r$, $d$ and $b$. Furthermore, 
with the same arguments and the ``moreover'' statement of Lemma~\ref{lem:triangles},
we conclude that if $B(G_1(1,3,1))$ has an interval ordering where
the minimum (resp.\ maximum) element is a red (resp.\ blue) edge, 
then so does $B(G_1(r,d,b))$. Thus, it suffices to show that this lemma
holds for $r = 1$, $d = 3$ and $b = 1$. This follows from the lattice path
presentation of $B(G_1(1,3,1))$ in Figure~\ref{fig:LatG131}.
\end{proof}

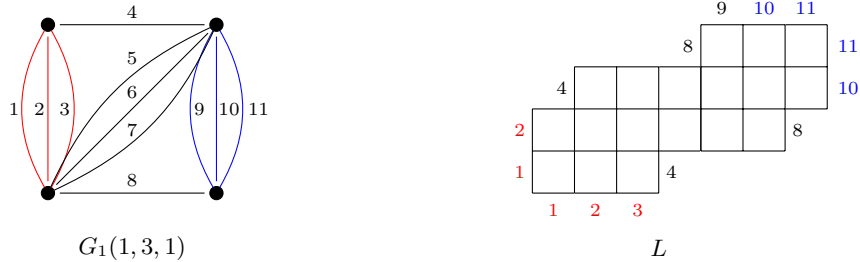
\begin{figure}[ht!]
\begin{center}

\begin{tikzpicture}[scale=0.7]

\begin{scope}[xshift=-5cm, yshift=6cm, scale=0.8]
\node [vertex] (0) at (-2,0){};
\node [vertex] (1) at (2,0){};
\node [vertex] (2) at (-2,4){};
\node [vertex] (3) at (2,4){};
\node[] at (-2.8,2){\scriptsize{$1$}};
\node[] at (-2.2,2){\scriptsize{$2$}};
\node[] at (-1.6,2){\scriptsize{$3$}};
\node[] at (0,4.3){\scriptsize{$4$}};
\node[] at (0,3.2){\scriptsize{$5$}};
\node[] at (0,2.4){\scriptsize{$6$}};
\node[] at (0,1.5){\scriptsize{$7$}};
\node[] at (0,0.3){\scriptsize{$8$}};
\node[] at (1.6,2){\scriptsize{$9$}};
\node[] at (2.3,2){\scriptsize{$10$}};
\node[] at (3,2){\scriptsize{$11$}};
\node[] at (0,-1.3){\small{$G_1(1,3,1)$}};

\foreach \from/\to in {0/1, 2/3, 0/3}
\draw [edge] (\from) to (\to);

\path[-, red]          (0)  edge   [bend left]   (2);
\path[-, red]          (0)  edge   [bend right]   (2);
\draw[edge, red]          (0)  to   (2);
\path[-, blue]          (1)  edge   [bend left]   (3);
\path[-, blue]          (1)  edge   [bend right]   (3);
\draw[edge, blue]          (1)  to   (3);
\path[-]          (0)  edge   [bend right =20]   (3);
\path[-]          (0)  edge   [bend left =20]   (3);

\end{scope}

\begin{scope}[xshift=5cm, yshift = 6cm, scale=0.8]

\draw[step=1cm] (-2,0) grid (0,3);
\draw[step=1cm] (0.01,1) grid (0.98,3);
\draw[step=1cm] (1,1) grid (3,4);
\draw[step=1cm] (-3,0) grid (-2.01,2);
\draw[step=1cm] (3.01,2) grid (4,4);

\color{red}
\node[] at (-3.3,0.5){\scriptsize{$1$}};
\node[] at (-3.3,1.5){\scriptsize{$2$}};
\node[] at (-2.5,-0.4){\scriptsize{$1$}};
\node[] at (-1.5,-0.4){\scriptsize{$2$}};
\node[] at (-0.5,-0.4){\scriptsize{$3$}};
\color{black}
\node[] at (-2.3,2.5){\scriptsize{$4$}};
\node[] at (0.3,0.5){\scriptsize{$4$}};
\node[] at (0.7,3.5){\scriptsize{$8$}};
\node[] at (3.3,1.5){\scriptsize{$8$}};
\node[] at (1.5,4.4){\scriptsize{$9$}};
\color{blue}
\node[] at (4.5,2.5){\scriptsize{$10$}};
\node[] at (4.5,3.5){\scriptsize{$11$}};

\node[] at (2.5,4.4){\scriptsize{$10$}};
\node[] at (3.5,4.4){\scriptsize{$11$}};
\color{black}

\node[] at (0,-1.3){\small{$L$}};

\end{scope}
\end{tikzpicture}
\caption{The graph $G_1(1,3,1)$ together with a lattice path
matroid $L$, where the labels describe an isomorphism $\varphi\colon
B(G_1(1,3,1))\to L$. Indeed, it is not hard to observe that $\varphi$ defines
a bijection between the families of circuits of rank at most $3$, and  that it
defines a bijection on the collections of bases. Since
$B(G_1(1,3,1))$ and $L$ are rank $4$ matroids, we conclude
that $\varphi$ defines a bijection on the family of all circuits.}
\label{fig:LatG131}
\end{center}
\end{figure}


Recall that a pair of elements in a matroid $M$ are clones, if permuting
these elements yields an automorphism of $M$. 
It is not hard to see for each graph depicted in Figure~\ref{fig:H1},
the blue and the red edges correspond to a pair of classes of clone elements
in the corresponding bicircular matroid. Thus, for any of these
graphs, any two subdivisions, $G'$ and $G''$, of at most
one red edge and at most one blue edge (into the same number of
blue and red edges, respectively) it holds that $B(G')\cong B(G'')$.

\begin{proposition}\label{prop:familyF1}
The bicircular matroid of each graph in $\mathcal{F}_1$ is a lattice path matroid.
\end{proposition}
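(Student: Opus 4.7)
The plan is to reduce Proposition~\ref{prop:familyF1} to the combination of Lemma~\ref{lem:Grdb} and Lemma~\ref{lem:Gsubdivision}, via the clone observation recorded in the paragraph preceding the statement. I will fix an arbitrary $G \in \mathcal{F}_1$, so that $G$ is obtained from some $G_1(r,d,b)$ by subdividing at most one red edge and at most one blue edge. The preceding remark tells us that, in $B(G_1(r,d,b))$, the red edges form a single class of clones and likewise for the blue edges; hence $B(G)$ is determined up to isomorphism by the pair of integers counting the red and blue subdivision edges, not by the specific choice of red and blue edges to be subdivided.

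The central input is Lemma~\ref{lem:Grdb}, which provides an interval ordering on $B(G_1(r,d,b))$ whose minimum is a red edge $e_{\mathrm{red}}$ and whose maximum is a blue edge $e_{\mathrm{blue}}$. Using the clone reduction (equivalently, by swapping labels in the interval presentation via the matroid automorphism guaranteed by the clone property), I may assume that the red edge of $G_1(r,d,b)$ that gets subdivided in $G$ is precisely $e_{\mathrm{red}}$, and the blue edge that gets subdivided is precisely $e_{\mathrm{blue}}$. From here the proof proceeds by two successive applications of Lemma~\ref{lem:Gsubdivision}: first I subdivide $e_{\mathrm{red}}$ into the prescribed number of edges, and then I subdivide $e_{\mathrm{blue}}$ into the prescribed number of edges.

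The place where care is needed, and the main obstacle I anticipate, is bookkeeping on the interval ordering between the two applications of Lemma~\ref{lem:Gsubdivision}: after the first subdivision, the lemma can be invoked again only if $e_{\mathrm{blue}}$ remains a terminal element in some interval ordering of the resulting lattice path matroid. This is exactly what Lemma~\ref{lem:copar} supplies, since coparallel extension at the minimum inserts the new elements in front of $e_{\mathrm{red}}$ and leaves the maximum end of the ordering untouched; an iterated (or one-sided) version of the same construction covers subdivision into arbitrarily many edges. Once this observation is in place, the second application of Lemma~\ref{lem:Gsubdivision} produces $G$ together with a lattice path structure on $B(G)$, completing the proof. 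Apart from this bookkeeping, the argument is routine and relies only on tools already established in Section~\ref{sec:prelim}.
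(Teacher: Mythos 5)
Your proposal is correct and follows essentially the same route as the paper's proof: both invoke Lemma~\ref{lem:Grdb} for an interval ordering with a red minimum and blue maximum, use the clone observation to reduce to subdividing those particular edges, and then apply Lemma~\ref{lem:Gsubdivision} (ultimately Lemma~\ref{lem:copar}) to handle the subdivisions. Your explicit remark that the two-ended version of Lemma~\ref{lem:copar} is what keeps the blue edge terminal after the red edge is subdivided is a point the paper glosses over, but it is the same argument.
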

\begin{proof}
By Lemma~\ref{lem:Grdb}, the bicircular matroid of each graph
$G_1(r,d,b)$ has an interval ordering where the minimum and maximum
elements are a red and a blue edge, respectively. By the arguments
in the paragraph above this lemma, any two subdivisions, $G'$ and $G''$,
of at most one red edge and at most one blue edge (into the same number
of blue and red edges, respectively) it holds that $B(G')\cong B(G'')$. 
Since we can choose an interval ordering of $B(G_1(r,d,b))$
where the minimum (resp.\ maximum) element is a red (resp.\ blue)
edge, we conclude by Lemma~\ref{lem:Gsubdivision}, that
if $G'$ is a subdivision of at most one red edge and at most one blue
edge of $G_1(r,d,b)$, then $B(G')$ is a lattice path matroid. 
Recall that each graph in
$\mathcal{F}_1$ is obtained by subdividing at most one red edge
and at  most and one blue edge of some $G_1(r,d,b)$. The claim
follows. 
\end{proof}

\subsection*{\normalfont{\textsc{Family  $\mathcal{F}_2$}}}

This family is constructed in a similar fashion to the construction of
$\mathcal{F}_1$.
Denote by $2K_3$ the graph obtained from $K_3$
by duplicating each edge. Again, we distinguish a pair of parallel
classes, one by color red and the other one by blue. 
Similar to how we did in the previous subsection, for a pair of non-negative
integers $r$ and $b$, we denote by $2K_3(r,b)$ the graph obtained from 
$2K_3$ by adding $r$ red parallel edges (to red edges) and
$b$ blue parallel edge (to blue edges).  From left to right, 
the following is a depiction of $2K_3 = 2K_3(0,0)$, of $2K_3(1,1)$,
and of $2K_3(r,b)$, where the dashed edges represent $r$ and $b$
parallel copies.

\begin{center}
\begin{tikzpicture}[scale=0.8]

\begin{scope}[xshift=-6cm, yshift=0cm, scale=0.9]
\node [vertex] (0) at (-1,0){};
\node [vertex] (1) at (1,0){};
\node [vertex] (2) at (0,2){};
\node[] at (0,-0.7){\small{$2K_3$}};

\path[-, red]          (0)  edge   [bend left]   (2);
\path[-, red]          (0)  edge   [bend right]   (2);
\path[-, blue]          (1)  edge   [bend right]   (2);
\path[-, blue]          (1)  edge   [bend left]   (2);
\path[-]          (1)  edge   [bend right]   (0);
\path[-]          (1)  edge   [bend left]   (0);

\end{scope}

\begin{scope}[xshift=0cm, yshift=0cm, scale=0.9]
\node [vertex] (0) at (-1,0){};
\node [vertex] (1) at (1,0){};
\node [vertex] (2) at (0,2){};
\node[] at (0,-0.7){\small{$2K_3(1,1)$}};

\path[-, red]          (0)  edge   [bend left]   (2);
\path[-, red]          (0)  edge   [bend right]   (2);
\path[-, blue]          (1)  edge   [bend right]   (2);
\path[-, blue]          (1)  edge   [bend left]   (2);
\path[-]          (1)  edge   [bend right]   (0);
\path[-]          (1)  edge   [bend left]   (0);
\draw[edge, blue] (1) to (2);
\draw[edge, red] (0) to (2);

\end{scope}

\begin{scope}[xshift=6cm, yshift=0cm, scale=0.9]
\node [vertex] (0) at (-1,0){};
\node [vertex] (1) at (1,0){};
\node [vertex] (2) at (0,2){};
\node[] at (0,-0.7){\small{$2K_3(r,b)$}};

\path[-, red]          (0)  edge   [bend left]   (2);
\path[-, red]          (0)  edge   [bend right]   (2);
\path[-, blue]          (1)  edge   [bend right]   (2);
\path[-, blue]          (1)  edge   [bend left]   (2);
\path[-]          (1)  edge   [bend right]   (0);
\path[-]          (1)  edge   [bend left]   (0);

\draw[edge, blue, dashed] (1) to (2);
\draw[edge, red, dashed] (0) to (2);

\end{scope}
\end{tikzpicture}
\end{center}
As previously mentioned, we construct $\mathcal{F}_2$ in a similar
manner to how we constructed $\mathcal{F}_1$. A graph $G$
belongs to $\mathcal{F}_2$ if the following holds:
\begin{enumerate}
	\item $G$ is a subdivision of at most one red edge and at most one
	blue edge of $2K_3(r,b)$ for some non-negative integers $r$ and $b$.
\end{enumerate}

\begin{proposition}\label{prop:familyF2}
The bicircular matroid of each graph in $\mathcal{F}_2$ is a lattice path matroid.
\end{proposition}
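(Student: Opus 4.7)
The plan is to mirror the proof of Proposition~\ref{prop:familyF1}. First, I would establish the following analogue of Lemma~\ref{lem:Grdb}: for all non-negative integers $r$ and $b$, the bicircular matroid $B(2K_3(r,b))$ is a lattice path matroid that admits an interval ordering whose minimum is a red edge and whose maximum is a blue edge. Granted this claim, since any two parallel edges in a graph are clone elements in the bicircular matroid, the red (respectively, blue) edges form a set of clones in $B(2K_3(r,b))$. Hence any two subdivisions of $2K_3(r,b)$ that each subdivide at most one red edge and at most one blue edge into the same total number of edges produce isomorphic bicircular matroids. Combining this observation with Lemma~\ref{lem:Gsubdivision} applied to the minimum red and maximum blue edges of the interval ordering gives the statement, since every graph in $\mathcal{F}_2$ is of this subdivision form.

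The base case for the analogue of Lemma~\ref{lem:Grdb} is $B(2K_3(1,1))$, a rank-$3$ bicircular matroid on eight elements. I would exhibit its lattice path presentation in a figure analogous to Figure~\ref{fig:LatG131}, realising the two triangle-type circuits (the three red edges and the three blue edges), together with the four-element circuits (those formed by pairs of disjoint $2$-cycles), as transversal circuits of the diagram, while placing a red edge at the minimum and a blue edge at the maximum of the interval ordering.

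To pass from the base case to arbitrary $r, b \geq 1$, I would induct on $r+b$ using Lemma~\ref{lem:triangles}. Each inductive step adds an edge to a red or blue parallel class that already contains at least three edges, so Lemma~\ref{lem:triangles} applies, and its moreover clause preserves the chosen minimum and maximum of the interval ordering. For the boundary cases $r=0$ or $b=0$, I would handle them separately, either by giving direct small-case lattice path presentations ($B(2K_3) = U_{3,6}$, which as a uniform matroid admits any interval ordering, and $B(2K_3(1,0))$, $B(2K_3(0,1))$ can be presented directly in the style of Figure~\ref{fig:LatG131}), or by using that lattice path matroids are closed under deletion, so that $B(2K_3(0,b))$ and $B(2K_3(r,0))$ arise as deletions of $B(2K_3(1,b))$ and $B(2K_3(r,1))$, with some care to preserve the extremal structure of the ordering.

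The main obstacle is the base case: constructing the explicit lattice path presentation of $B(2K_3(1,1))$ with a red minimum and a blue maximum. Although this is a finite combinatorial check, the diagram must simultaneously witness each triangle-type and four-element bicircular circuit as a transversal circuit while pinning the colors of the extremal elements. Once this is verified, the remainder of the argument proceeds essentially verbatim by analogy with the proof of Proposition~\ref{prop:familyF1}.
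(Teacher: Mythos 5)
Your proposal follows essentially the same route as the paper: an explicit lattice path presentation of $B(2K_3(1,1))$ with a red minimum and a blue maximum, growth of the parallel classes via Lemma~\ref{lem:triangles}, the clone observation for parallel edges, and Lemma~\ref{lem:Gsubdivision} to handle the subdivided red and blue edges. The paper's base-case check is lighter than you anticipate --- since both matroids have rank $3$, it suffices to observe that the only non-spanning circuits on each side are the red triangle and the blue triangle, so the four-element circuits need no separate verification --- and the paper is slightly less explicit than you about the boundary cases $r=0$ or $b=0$, but the argument is the same.
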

\begin{proof}
We begin by proposing the following lattice path presentation of
$B(2K_3(1,1))$.

\begin{center}

\begin{tikzpicture}[scale=0.8]

\begin{scope}[xshift=-3cm, yshift=5cm, scale=0.66]
\node [vertex] (0) at (-1.5,0){};
\node [vertex] (1) at (1.5,0){};
\node [vertex] (2) at (0,2.6){};

\node[] at (-1.25,1.7){\scriptsize{$1$}};
\node[] at (-0.9,1.45){\scriptsize{$2$}};
\node[] at (-0.5,1.2){\scriptsize{$3$}};
\node[] at (0,0.7){\scriptsize{$4$}};
\node[] at (0,-0.2){\scriptsize{$5$}};
\node[] at (0.5,1.2){\scriptsize{$6$}};
\node[] at (0.9,1.45){\scriptsize{$7$}};
\node[] at (1.25,1.7){\scriptsize{$8$}};

\path[-, red]          (0)  edge   [bend left]   (2);
\path[-, red]          (0)  edge   [bend right]   (2);
\draw[edge, red] (0) to (2);

\path[-, blue]          (1)  edge   [bend right]   (2);
\path[-, blue]          (1)  edge   [bend left]   (2);
\draw[edge, blue] (1) to (2);

\path[-]          (1)  edge   [bend right]   (0);
\path[-]          (1)  edge   [bend left]   (0);
\end{scope}

\begin{scope}[xshift=3cm, yshift = 5cm, scale=0.8]

\draw[step=1cm] (0,0) grid (3,3);
\draw[step=1cm] (-1,0) grid (-0.01,2);
\draw[step=1cm] (3.01,1) grid (4,3);

\color{red}
\node[] at (-1.3,0.5){\scriptsize{$1$}};
\node[] at (-1.3,1.5){\scriptsize{$2$}};

\node[] at (-0.5,-0.3){\scriptsize{$1$}};
\node[] at (0.5,-0.3){\scriptsize{$2$}};
\node[] at (1.5,-0.3){\scriptsize{$3$}};

\color{black}
\node[] at (-0.3,2.5){\scriptsize{$4$}};
\node[] at (2.5,-0.3){\scriptsize{$4$}};

\node[] at (3.3,0.5){\scriptsize{$5$}};

\node[] at (0.5,3.4){\scriptsize{$5$}};
\node[] at (1.5,3.4){\scriptsize{$6$}};

\color{blue}
\node[] at (4.3,1.5){\scriptsize{$7$}};
\node[] at (4.3,2.5){\scriptsize{$8$}};
\node[] at (2.5,3.4){\scriptsize{$7$}};
\node[] at (3.5,3.4){\scriptsize{$8$}};
\color{black}
\end{scope}
\end{tikzpicture}

\label{fig:2K3}
\end{center}
Denote by $J$ the lattice path matroid on the right. One can notice that
$B(2K_3(1,1))$ and $J$ are isomorphic matroids because,  the only
non-spanning circuits in both matroids are $123$ and $678$.
This lattice path presentation of $B(2K_3(1,1))$ has an interval ordering
where the minimum (resp.\ maximum)
element  is a red (resp.\ blue) edge. 
Clearly,  $2K_3(r,b)$ is obtained by adding an edge to a parallel class of
$2K_3(r-1,b)$ or of $2K_3(r,b-1)$. Thus, by Lemma~\ref{lem:triangles}, we
conclude that  $B(2K_3(r,b))$ is a lattice path matroid, and has an interval
ordering where the minimum (resp.\ maximum) is a red (resp.\ blue) edge.

As mentioned before, any class of parallel edges is a set of clone edges. Thus, 
any two subdivisions of $2K_3(r,b)$ of at most
one red edge and at most one blue edge (into the same number of
blue and red edges, respectively) have isomorphic bicircular matroids. 
By the arguments in the first paragraph, we can choose an interval ordering
of $B(2K_3(r,b))$ where the minimum (resp.\ maximum) element is a red
(resp.\ blue) edge. Thus, we conclude by Lemma~\ref{lem:Gsubdivision}, that
if $G'$ is a subdivision of at most one red edge and at most one blue
edge of $2K_3(r,b)$, then $B(G')$ is a lattice path matroid. 
The claim follows.
\end{proof}

\subsection*{\normalfont{\textsc{Family $\mathcal{F}_3$}}}

Similar to previous cases, we begin by introducing the building blocks
of $\mathcal{F}_3$. Consider three non-negative
integers $r$, $j$ and $l$. We denote by $K_3(r,j,l)$ the graph
obtained from $K_3$ by adding $l$ loops incident with the same vertex $v$; by 
adding $j$ parallel edges to some edge of $K_3$ incident with $v$;
and by adding $r$ parallel edges to the (unique) edge of $K_3$ not incident
with $v$. In Figure~\ref{fig:H3}, we depict $K_3(r,j,l)$ 
together with a couple of particular instances. Again, we use
blue and red color to distinguish some edges, and this is useful for the
definition of $\mathcal{F}_3$.

\begin{figure}[ht!]
\begin{center}

\begin{tikzpicture}[scale=0.8]

\begin{scope}[xshift=-5cm, yshift=3.5cm, scale=0.9]
\node [vertex] (0) at (-1,0){};
\node [vertex] (1) at (1,0){};
\node [vertex] (2) at (0,2){};
\node[] at (0,-1){\small{$K_3(1,1,0)$}};

\draw [edge, purple] (0) to (1);

\foreach \from/\to in {0/2, 2/0}
\path[-, red]          (\from)  edge   [bend left]   (\to);
\foreach \from/\to in {1/2, 2/1}
\path[-, blue]          (\from)  edge   [bend left]   (\to);
\end{scope}

\begin{scope}[xshift=0cm, yshift=3.5cm, scale=0.9]
\node [vertex] (0) at (-1,0){};
\node [vertex] (1) at (1,0){};
\node [vertex] (2) at (0,2){};
\node[] at (0,-1){\small{$K_3(r,1,0)$}};

\foreach \from/\to in {1/2, 2/1}
\path[-, blue]          (\from)  edge   [bend left]   (\to);

\path[-, red, dashed]          (0)  edge   [bend left]   (2);
\path[-, red]          (2)  edge   [bend left]   (0);

\draw [edge, blue] (0) to (1);
\end{scope}
\begin{scope}[xshift=5cm, yshift=3.5cm, scale=0.9]
\node [vertex] (0) at (-1,0){};
\node [vertex] (1) at (1,0){};
\node [vertex] (2) at (0,2){};
\node[] at (0,-1){\small{$K_3(r,j,0)$}};

\foreach \from/\to in {1/2, 2/1}
\path[-, blue]          (1)  edge   [bend left]   (2);
\path[-, blue, dashed]          (2)  edge   [bend left]   (1);

\path[-, red, dashed]          (0)  edge   [bend left]   (2);
\path[-, red]          (2)  edge   [bend left]   (0);

\draw [edge] (0) to (1);
\end{scope}

\begin{scope}[xshift=-7.5cm, yshift=0cm, scale=0.9]

\node [vertex] (0) at (-1,0){};
\node [vertex] (1) at (1,0){};
\node [vertex] (2) at (0,2){};
\node[] at (0,-1){\small{$K_3(1,0,1)$}};

\foreach \from/\to in {0/1, 1/2}
\draw [edge, purple] (\from) to (\to);

\foreach \from/\to in {0/2, 2/0}
\path[-, red]          (\from)  edge   [bend left]   (\to);
\path[-]         (1) edge [min distance=1cm, in = 45, out = -45] (1);

\end{scope}

\begin{scope}[xshift=-2.5cm, yshift=0cm, scale=0.9]
\node [vertex] (0) at (-1,0){};
\node [vertex] (1) at (1,0){};
\node [vertex] (2) at (0,2){};
\node[] at (0,-1){\small{$K_3(r,0,1)$}};

\foreach \from/\to in {0/1, 1/2}
\draw [edge, blue] (\from) to (\to);

\path[-, red, dashed]          (0)  edge   [bend left]   (2);
\path[-, red]          (2)  edge   [bend left]   (0);
\path[-]         (1) edge [min distance=1cm, in = 45, out = -45] (1);
\end{scope}

\begin{scope}[xshift=2.5cm, yshift=0cm, scale=0.9]%
\node [vertex] (0) at (-1,0){};
\node [vertex] (1) at (1,0){};
\node [vertex] (2) at (0,2){};
\node[] at (0,-1){\small{$K_3(1,j,1)$}};

\foreach \from/\to in {0/2, 2/0}
\path[-, red]          (\from)  edge   [bend left]   (\to);
\draw [edge, red] (0) to (1);
\foreach \from/\to in {1/2, 2/1}
\path[-, blue]          (1)  edge   [bend left]   (2);
\path[-, blue, dashed]          (2)  edge   [bend left]   (1);

\path[-]         (1) edge [min distance=1cm, in = 45, out = -45] (1);
\end{scope}

\begin{scope}[xshift=7.5cm, yshift=0cm, scale=0.9]
\node [vertex] (0) at (-1,0){};
\node [vertex] (1) at (1,0){};
\node [vertex] (2) at (0,2){};
\node[] at (0,-1){\small{$K_3(r,j,1)$}};

\draw [edge] (0) to (1);

\foreach \from/\to in {1/2, 2/1}
\path[-, blue]          (1)  edge   [bend left]   (2);
\path[-, blue, dashed]          (2)  edge   [bend left]   (1);

\path[-, red, dashed]          (0)  edge   [bend left]   (2);
\path[-, red]          (2)  edge   [bend left]   (0);

\path[-]         (1) edge [min distance=1cm, in = 45, out = -45] (1);
\end{scope}

\begin{scope}[xshift=-5cm, yshift=-3.5cm, scale=0.9]
\node [vertex] (0) at (-1,0){};
\node [vertex] (1) at (1,0){};
\node [vertex] (2) at (0,2){};
\node[] at (0,-1){\small{$K_3(1,0,l)$}};

\foreach \from/\to in {0/2, 2/0}
\path[-, red]          (\from)  edge   [bend left]   (\to);

\foreach \from/\to in {0/1, 1/2}
\draw [edge, red] (\from) to (\to);
\path[-, dashed]         (1) edge [min distance=1cm, in = 45, out = -45] (1);
\end{scope}

%

\begin{scope}[xshift=0cm, yshift=-3.5cm, scale=0.9]
\node [vertex] (0) at (-1,0){};
\node [vertex] (1) at (1,0){};
\node [vertex] (2) at (0,2){};
\node[] at (0,-1){\small{$K_3(1,j,l)$}};

\foreach \from/\to in {0/2, 2/0}
\path[-, red]          (\from)  edge   [bend left]   (\to);
\draw [edge, red] (0) to (1);

\path[-]          (1)  edge   [bend left]   (2);
\path[-, dashed]          (2)  edge   [bend left]   (1);

\path[-, dashed]         (1) edge [min distance=1cm, in = 45, out = -45] (1);
\end{scope}

\begin{scope}[xshift=5cm, yshift=-3.5cm, scale=0.9]
\node [vertex] (0) at (-1,0){};
\node [vertex] (1) at (1,0){};
\node [vertex] (2) at (0,2){};
\node[] at (0,-1){\small{$K_3(r,j,l)$}};

\path[-, dashed, red]          (0)  edge   [bend left]   (2);
\path[-,  red]          (2)  edge   [bend left]   (0);

\draw [edge] (0) to (1);

\path[-]          (1)  edge   [bend left]   (2);
\path[-, dashed]          (2)  edge   [bend left]   (1);

\path[-, dashed]         (1) edge [min distance=1cm, in = 45, out = -45] (1);
\end{scope}

\end{tikzpicture}
\caption{The generating set of graphs for $\mathcal{F}_3$.
Each dashed edge represents $r$ and $j$ parallel edges, 
and the dashed loop represents $l$ loops incident in the same vertex.
The non-bend edges of $K_3(1,1,0)$  and of $K_3(1,0,1)$ are both blue and red.}
\label{fig:H3}
\end{center}
\end{figure}
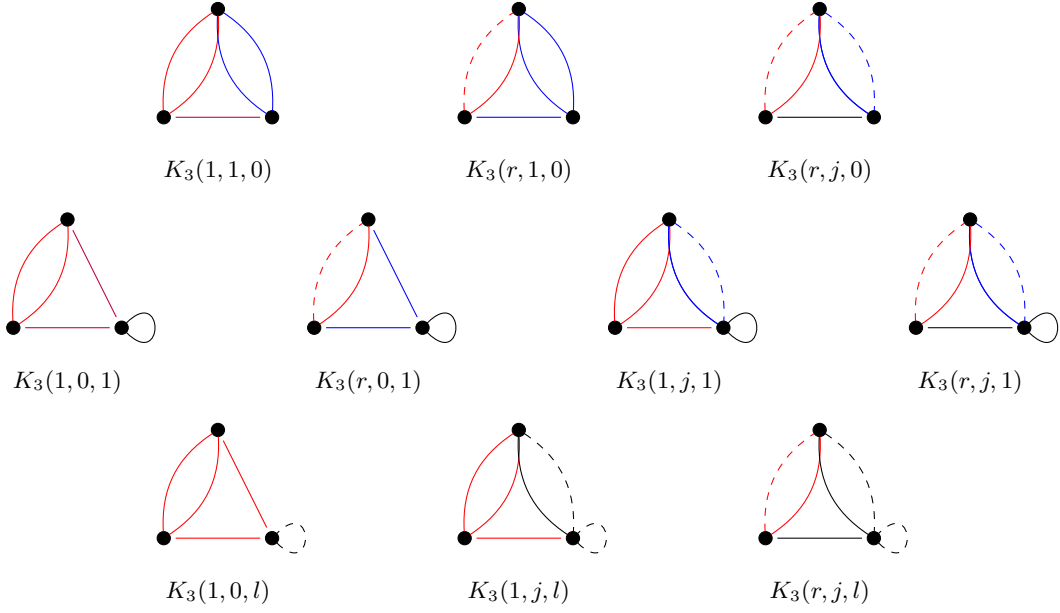


In particular, notice that the bicircular matroids of $K_3(1,1,0)$ and
$K_3(1,0,1)$ are isomorphic to the uniform matroid $U_{3,5}$.  Also
notice that for any pair of non-negative integers, $r$ and $j$, 
the graph $K_3(r,j,0)$ is a subgraph of $2K_3(r,j)$.
We construct $\mathcal{F}_3$ in a similar fashion to how we
constructed $\mathcal{F}_1$. This is done by following the 
edge colorings depicted in Figure~\ref{fig:H3}. A graph $G$ belongs to 
$\mathcal{F}_3$ if either of the followings statements hold:
\begin{enumerate}
	\item $G$ is a subdivision of at most one red edge and at most
	one blue edge of $K_3(r,j,l)$ for some non-negative integers $r$, $j$ and $l$,
	with $r\ge 1$, 
	\item $G$ is a cycle with one chord $e$, and possibly some edges parallel to
	$e$, and at most one loop in each end point of $e$,
	\item $G$ is a chordless cycle with arbitrarily many loops in at most
	one vertex $v$, and at most one loop in one neighbor of $v$, or
	\item $G$ is a connected graph on two vertices.
\end{enumerate}

\begin{lemma}\label{lem:familyF3}
For every graph $G$ in $\mathcal{F}_3$ there is a graph $H\in \mathcal{F}_1\cup
\mathcal{F}_2$ such that $B(G)$ is a minor of $B(H)$.
\end{lemma}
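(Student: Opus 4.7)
The plan is to proceed by case analysis on the four defining conditions of $\mathcal{F}_3$, exhibiting for each $G \in \mathcal{F}_3$ a graph $H \in \mathcal{F}_1 \cup \mathcal{F}_2$ together with a sequence of edge deletions and contractions of $H$ that realizes $G$. Since such graph operations correspond (for non-loop edges) to the analogous matroid operations on $B(H)$, this will yield $B(G)$ as a minor of $B(H)$.

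The central tool is the observation that contracting one of $k \ge 2$ parallel edges between two vertices $u, v$ of a loopless graph identifies $u$ and $v$ and converts the remaining $k - 1$ parallels into loops at the merged vertex. This is precisely the mechanism by which the loops appearing in $\mathcal{F}_3$-graphs are produced from the loopless sources in $\mathcal{F}_1 \cup \mathcal{F}_2$. For case~1 with $l \ge 1$, I would take $H = G_1(r-1, j, l-1) \in \mathcal{F}_1$ and contract one blue edge: a direct inspection shows this merges vertices $1$ and $3$ into a vertex $v$ and produces a $3$-vertex graph with $r+1$ red edges opposite $v$, an edge of multiplicity one and an edge of multiplicity $j+1$ incident to $v$, and $l$ loops at $v$, which is isomorphic to $K_3(r, j, l)$. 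For case~1 with $l = 0$, the graph $K_3(r, j, 0)$ is a spanning subgraph of $2K_3(r, j) \in \mathcal{F}_2$, obtained by deleting one edge from each parallel class. For case~4 (connected $2$-vertex graph), contracting one red and one blue edge of $G_1(r, d, b)$ with sufficiently large parameters and deleting extras yields the prescribed parallel and loop multiplicities on the two merged vertices. For cases~2 and~3, I would subdivide the cycle edges of a suitable $G_1(r, d, b)$ to reach the required cycle length (using the $G_1$-diagonals as the chord in case~2), then contract a single edge of a parallel class at the distinguished vertex to generate the at-most-one loop.

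The main obstacle, and the step requiring the most bookkeeping, is matching the allowed subdivisions. The definitions of $\mathcal{F}_1$ and $\mathcal{F}_2$ permit subdividing at most one red and at most one blue edge of the source graph, and the red/blue clone classes of $G_1(r, d, b)$ and $2K_3(r, b)$ depend on the particular parameter values as indicated in Figure~\ref{fig:H1}. Certain $\mathcal{F}_3$-subdivisions, for instance subdividing a $v$-incident edge of $K_3(r, j, l)$, correspond in the contraction of $G_1(r-1, j, l-1)$ to subdividing one of the non-colored cycle edges $0$-$1$ or $2$-$3$ or to subdividing a diagonal, and none of these lies in the red or blue clone class of a generic $G_1(r, d, b)$. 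The resolution is to select parameters of the source graph that put the target edge into the red or blue clone class, exploiting the fact that the clone-class structure expands for degenerate parameter values (e.g.\ $G_1(0, 0, b)$ has the edges $0$-$1$ and $2$-$3$ in its red class). Once the appropriate parameter selection has been made in each subcase, the remaining deletions and contractions become routine and the conclusion follows.
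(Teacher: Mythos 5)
Your proposal is correct and follows essentially the same route as the paper: a case analysis on the four defining conditions of $\mathcal{F}_3$, realizing each $G$ by contracting edges of parallel classes in a suitable $G_1(r,d,b)$ or $2K_3(r,b)$ (so that the leftover parallels become the required loops), with the same treatment of $K_3(r,j,0)$ as a subgraph of $2K_3(r,j)$ and the same care about placing subdivided edges into the red/blue clone classes. The only difference is cosmetic parameter bookkeeping (you contract $G_1(r-1,j,l-1)$ to hit $K_3(r,j,l)$ exactly, whereas the paper contracts $G_1(r+1,j+1,l)$ and passes to a subgraph).
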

\begin{proof}
We consider the four possible scenarios for a graph in $\mathcal{F}_3$,
and we begin with the last one.  Suppose that $G$ is a connected
graph on two vertices, $x$ and $y$. Let $l_x$ denote the number of loops
on $x$, $l_y$ the number of loops on $y$, and $m$ the number of $xy$-edges.
Consider the graph $H$ obtained from $G_1(l_x,m,l_y)$ by contracting one
blue and one red edge.  By removing one loop from each vertex of $H$ and
two of the non-loop edges, we see that $G$ is a subgraph of $H$, and
thus a minor of $G_1(l_x,m,l_y)\in \mathcal{F}_1$. Below, we depict
these graphs when $l_x = l_y = 1$ and $m = 2$.
\begin{center}
\begin{tikzpicture}[scale=0.8]
\begin{scope}[xshift=-6cm, yshift=0cm, scale=0.9]
\node [vertex] (0) at (-1,0){};
\node [vertex] (1) at (1,0){};
\node [vertex] (2) at (-1,2){};
\node [vertex] (3) at (1,2){};
\node[] at (0,-0.7){\small{$G_1(1,2,1)$}};

\foreach \from/\to in {3/2, 0/1}
\draw [edge] (\from) to (\to);

\path[-]          (0)  edge   [bend left = 20]   (3);
\path[-]          (0)  edge   [bend right = 20]   (3);

\path[-, red]          (0)  edge   [bend left]   (2);
\path[-, red]          (0)  edge   [bend right]   (2);
\draw [edge, red] (0) to (2);

\path[-, blue]          (1)  edge   [bend right]   (3);
\path[-, blue]          (1)  edge   [bend left]   (3);
\draw [edge, blue] (1) to (3);
\end{scope}
\begin{scope}[xshift=0cm, yshift=0cm, scale=0.9]
\node [vertex, label=180:{$x$}] (0) at (-1,0.2){};
\node [vertex, label = 0:{$y$}] (1) at (1,0.2){};
\node[] at (0,-0.7){\small{$H$}};

\path[-]          (0)  edge   [bend left= 50]   (1);
\path[-]          (0)  edge   [bend right= 50]   (1);
\path[-]          (0)  edge   [bend left = 20]   (1);
\path[-]          (0)  edge   [bend right = 20]   (1);

\path[-]         (1) edge [min distance=1cm] (1);
\path[-]         (1) edge [min distance=1cm, in = 225, out = 315] (1);
\path[-]         (0) edge [min distance=1cm] (0);
\path[-]         (0) edge [min distance=1cm, in = 225, out = 315] (0);
\end{scope}

\begin{scope}[xshift=6cm, yshift=0cm, scale=0.9]
\node [vertex, label=180:{$x$}] (0) at (-1,0.2){};
\node [vertex, label = 0:{$y$}] (1) at (1,0.2){};
\node[] at (0,-0.7){\small{$G$}};

\path[-]          (0)  edge   [bend left]   (1);
\path[-]          (0)  edge   [bend right]   (1);

\path[-]         (1) edge [min distance=1cm] (1);
\path[-]         (0) edge [min distance=1cm] (0);

\end{scope}
\end{tikzpicture}
\end{center}
Now suppose that $G$ satisfies the third statement of the definition
of $\mathcal{F}_3$. Let $m$ be the length of the cycle, $l_v$ denote
the number of loops on $v$, and suppose that a neighbor $u$ of $v$ is
incident with one loop; the case when no neighbor of $v$ has a loop,
follows from the case we are now considering. In this case, consider the
graph $H$ obtained by subdividing the top edge of $G(0,0,l_v-1)$ into
$m-3$ new edges ($m-3 \ge 0$ since $m\ge 3$). Since the top 
edge of $G(0,0,l_v-1)$ is a red edge (see Figure~\ref{fig:H1}),
then $H\in\mathcal{F}_1$. Finally, by contracting one of the red bend edges and
one of the blue edges of $H$ we see that $G$ is a minor of $H$. We illustrate
the case when $l_v = 3$ below, where the dotted edges represent a path of length 
$m-2$.
\begin{center}
\begin{tikzpicture}[scale=0.8]
\begin{scope}[xshift=-6cm, yshift=0cm, scale=0.9]
\node [vertex] (0) at (-1,0){};
\node [vertex] (1) at (1,0){};
\node [vertex] (2) at (-1,2){};
\node [vertex] (3) at (1,2){};
\node[] at (0,-0.7){\small{$G_1(0,0,2)$}};

\foreach \from/\to in {3/2, 0/1}
\draw [edge, red] (\from) to (\to);

\path[-, red]          (0)  edge   [bend left]   (2);
\path[-, red]          (0)  edge   [bend right]   (2);

\path[-, blue]          (1)  edge   [bend right=50]   (3);
\path[-, blue]          (1)  edge   [bend left = 50]   (3);
\path[-, blue]          (1)  edge   [bend right = 20]   (3);
\path[-, blue]          (1)  edge   [bend left= 20]   (3);
\end{scope}
\begin{scope}[xshift=0cm, yshift=0cm, scale=0.9]
\node [vertex] (0) at (-1,0){};
\node [vertex] (1) at (1,0){};
\node [vertex] (2) at (-1,2){};
\node [vertex] (3) at (1,2){};
\node[] at (0,-0.7){\small{$H$}};

\draw [edge, red] (0) to (1);
\draw [edge, red, dotted] (3) to (2);

\path[-, red]          (0)  edge   [bend left]   (2);
\path[-, red]          (0)  edge   [bend right]   (2);

\path[-, blue]          (1)  edge   [bend right=50]   (3);
\path[-, blue]          (1)  edge   [bend left = 50]   (3);
\path[-, blue]          (1)  edge   [bend right = 20]   (3);
\path[-, blue]          (1)  edge   [bend left= 20]   (3);
\end{scope}

\begin{scope}[xshift=6cm, yshift=0cm, scale=0.9]
\node [vertex, label=180:{$u$}] (0) at (-1,0.2){};
\node [vertex] (1) at (1,0.2){};
\node[] at (0,-0.7){\small{$G$}};
\node[] at (2,0.2){$v$};

\path[-, dotted]          (0)  edge   [bend left = 60]   (1);
\path[-]          (0)  edge   [bend right]   (1);

\path[-]         (1) edge [min distance=1cm] (1);
\path[-]         (1) edge [min distance=1cm, in = 225, out = 315] (1);
\path[-]         (1) edge [min distance=1cm, in = 45, out = 315] (1);
\path[-]         (0) edge [min distance=1cm] (0);

\end{scope}
\end{tikzpicture}
\end{center}
Our next case is when $G$ is a cycle with two distinguished
vertices $u$ and $v$, such that $u$ and $v$ have at most
one loop each, and any chord of the cycle is a $uv$-edge. 
Analogous to the previous case, we only consider the case
when $u$ and $v$ are incident with a loop each. Let $m$ be
the number of $uv$-edges, $a_1$ the length of one of the
$uv$-arcs of the cycle, and $a_2$ the length of the other.
Let $H$ be the graph obtained form $G_1(0,m,0)$ by subdividing
the top and bottom edges into $a_1-1$ and $a_2-1$ edges, 
respectively. Since the top edge and bottom are red and blue
edges of $G_1(0,m,0)$ (see Figure~\ref{fig:H1}), we conclude
that $H\in \mathcal{F}_1$. In this case, $G$ is the minor of $H$ obtained
by contracting of edge of each parallel class. We illustrate the
case when $m = 2$ as follows, the dotted edges represent 
the corresponding subdivisions.
\begin{center}
\begin{tikzpicture}[scale=0.8]
\begin{scope}[xshift=-6cm, yshift=0cm, scale=0.9]
\node [vertex] (0) at (-1,0){};
\node [vertex] (1) at (1,0){};
\node [vertex] (2) at (-1,2){};
\node [vertex] (3) at (1,2){};
\node[] at (0,-0.7){\small{$G_1(0,2,0)$}};

\path[-]          (0)  edge   [bend left = 20]   (3);
\path[-]          (0)  edge   [bend right = 20]   (3);

\path[-, red]          (3)  edge    (2);
\path[-, blue]          (0)  edge     (1);
\path[-, red]          (0)  edge   [bend left]   (2);
\path[-, red]          (0)  edge   [bend right]   (2);
\path[-, blue]          (1)  edge   [bend right]   (3);
\path[-, blue]          (1)  edge   [bend left]   (3);
\end{scope}
\begin{scope}[xshift=0cm, yshift=0cm, scale=0.9]
\node [vertex] (0) at (-1,0){};
\node [vertex] (1) at (1,0){};
\node [vertex] (2) at (-1,2){};
\node [vertex] (3) at (1,2){};
\node[] at (0,-0.7){\small{$H$}};

\path[-]          (0)  edge   [bend left = 20]   (3);
\path[-]          (0)  edge   [bend right = 20]   (3);

\path[-, red, dotted]          (3)  edge    (2);
\path[-, blue, dotted]          (0)  edge     (1);
\path[-, red]          (0)  edge   [bend left]   (2);
\path[-, red]          (0)  edge   [bend right]   (2);
\path[-, blue]          (1)  edge   [bend right]   (3);
\path[-, blue]          (1)  edge   [bend left]   (3);
\end{scope}

\begin{scope}[xshift=6cm, yshift=0cm, scale=0.9]
\node [vertex, label=180:{$u$}] (0) at (-1,0.4){};
\node [vertex, label =0:{$v$}] (1) at (1,0.4){};
\node[] at (0,-0.7){\small{$G$}};

\path[-, dotted]          (0)  edge   [bend left = 60]   (1);
\path[-, dotted]          (0)  edge   [bend right = 60]   (1);
\path[-]          (0)  edge   [bend left = 20]   (1);
\path[-]          (0)  edge   [bend right = 20]   (1);

\path[-]         (1) edge [min distance=1cm] (1);

\path[-]         (0) edge [min distance=1cm] (0);

\end{scope}
\end{tikzpicture}
\end{center}
The final case is when $G$ is a subdivision of at most one red edge
and at most one blue edge of $K_3(r,j,l)$. As we already observed,
$K_3(r,j,0)$ is a subgraph of $2K_3(r,l)$.  Thus, $G$ is a subgraph
of a subdivision $H$ of at most one red and at most
one blue edge of $2K_3(r,j)$. The claim follows since $H\in \mathcal{F}_2$.
Now suppose that $l\ge 1$, and let $H$ be the graph obtained from
$G_1(r+1,j+1,l)$ by contracting one blue edge.  In this case,
$K_3(r,j,l)$ is a subgraph of $H$. We illustrate the case when
$r = j  = 1$ and $l = 2$ below. 
\begin{center}
\begin{tikzpicture}[scale=0.8]
\begin{scope}[xshift=-6cm, yshift=0cm, scale=0.9]
\node [vertex] (0) at (-1,0){};
\node [vertex] (1) at (1,0){};
\node [vertex] (2) at (-1,2){};
\node [vertex] (3) at (1,2){};
\node[] at (0,-0.7){\small{$G_1(2,3,2)$}};

\draw[edge] (2) to (3);
\draw[edge] (0) to (1);

\path[-]          (0)  edge   [bend left = 20]   (3);
\path[-]          (0)  edge   [bend right = 20]   (3);

\path[-, red]          (0)  edge   [bend left = 20]   (2);
\path[-, red]          (0)  edge   [bend right = 20]   (2);
\path[-, red]          (0)  edge   [bend left = 50]   (2);
\path[-, red]          (0)  edge   [bend right = 50]   (2);
\path[-, blue]          (1)  edge   [bend right = 20]   (3);
\path[-, blue]          (1)  edge   [bend left= 20]   (3);
\path[-, blue]          (1)  edge   [bend right = 50]   (3);
\path[-, blue]          (1)  edge   [bend left= 50]   (3);
\end{scope}
\begin{scope}[xshift=0cm, yshift=0cm, scale=0.9]
\node [vertex] (0) at (-1,0){};
\node [vertex] (1) at (1,0){};
\node [vertex] (2) at (-1,2){};
\node[] at (0,-0.7){\small{$H$}};

\draw[edge] (2) to (1);
\draw[edge] (0) to (1);

\path[-]          (0)  edge   [bend left]   (1);
\path[-]          (0)  edge   [bend right]   (1);

\path[-, red]          (0)  edge   [bend left = 20]   (2);
\path[-, red]          (0)  edge   [bend right = 20]   (2);
\path[-, red]          (0)  edge   [bend left = 50]   (2);
\path[-, red]          (0)  edge   [bend right = 50]   (2);
\path[-, blue]         (1) edge [min distance=1cm, in = 315, out = 45] (1);
\path[-, blue]         (1) edge [min distance=1cm, out = 315, in = 225] (1);
\path[-, blue]         (1) edge [min distance=1cm] (1);
\end{scope}

\begin{scope}[xshift=6cm, yshift=0cm, scale=0.9]
\node [vertex] (0) at (-1,0){};
\node [vertex] (1) at (1,0){};
\node [vertex] (2) at (-1,2){};
\node[] at (0,-0.7){\small{$G = 2K_3(1,1,2)$}};

\draw[edge] (2) to (1);

\path[-]          (0)  edge   [bend left]   (1);
\path[-]          (0)  edge   [bend right]   (1);

\path[-, red]          (0)  edge   [bend left]   (2);
\path[-, red]          (0)  edge   [bend right]   (2);
\path[-, blue]         (1) edge [min distance=1cm, in = 315, out = 45] (1);
\path[-, blue]         (1) edge [min distance=1cm] (1);
\end{scope}
\end{tikzpicture}
\end{center}
By preserving the edge colors in the operations previously described, 
we can easily see that the red edges coincide with the colors
described in Figure~\ref{fig:H3}. So, if $G$ is a subdivision of at
most one red edge of $K_3(r+1,j+1,l)$, then $G$ is a minor of a subdivision $H$
of at most one red edge of $G_1(r+1, j+1, l)$. Finally,
notice that the only case when the set of blue edges of $K_3(r,j,l)$ is not
empty, is  when $l = 1$  (see Figure~\ref{fig:H3}). Also notice, that when $l = 1$
then the blue edges and the unique loop in $K_3(r,j,l)$ are clone
edges. Thus, subdividing a blue edge yields the same bicircular matroid
as subdividing the loop. Following the operations previously defined,
we can notice that blue
edges of $G_1(r+1,j+1,l)$ are contracted to loop edges in 
$H$. Thus, for any subdivision $G$ of at most one red edge and at most
one blue edge of $K_3(r,j,l)$, there is a subdivision $H$ of at most
one red edge and at most one blue edge of some $G_1(r+1,j+1,l)$
such that $B(G)$ is a minor of $B(H)$. Since this covers the last
case of the definition of $\mathcal{F}_3$, the claim is proved.
\end{proof}

\begin{proposition}\label{prop:familyF3}
The bicircular matroid of each graph in $\mathcal{F}_3$ is a lattice path matroid.
\end{proposition}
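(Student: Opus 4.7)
The plan is to derive this proposition as an almost immediate corollary of Lemma~\ref{lem:familyF3}, together with Propositions~\ref{prop:familyF1} and~\ref{prop:familyF2} and the fact that the class of lattice path matroids is closed under minors (which is already recorded in the introduction, citing Bonin and de Mier~\cite{boninEJC27}).

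Concretely, given an arbitrary $G\in\mathcal{F}_3$, I would first invoke Lemma~\ref{lem:familyF3} to produce a graph $H\in\mathcal{F}_1\cup\mathcal{F}_2$ with the property that $B(G)$ is a minor of $B(H)$. Then, depending on which family $H$ belongs to, I would cite Proposition~\ref{prop:familyF1} or Proposition~\ref{prop:familyF2} to conclude that $B(H)$ is a lattice path matroid. Applying minor-closure of the class of lattice path matroids to the pair $B(G)$ and $B(H)$ then yields the desired conclusion.

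There is essentially no technical obstacle left at this stage, since the heavy lifting was absorbed into the case analysis of Lemma~\ref{lem:familyF3} (which already handled the four defining cases of $\mathcal{F}_3$ by reducing each of them to a minor of a member of $\mathcal{F}_1$ or $\mathcal{F}_2$). If anything, the only remark worth including explicitly is a pointer to minor-closure; everything else reduces to a two-line invocation of previously proved statements.
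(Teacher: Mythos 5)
Your proposal is correct and follows exactly the same route as the paper: invoke Lemma~\ref{lem:familyF3} to realize $B(G)$ as a minor of $B(H)$ for some $H\in\mathcal{F}_1\cup\mathcal{F}_2$, apply Propositions~\ref{prop:familyF1} and~\ref{prop:familyF2}, and conclude by minor-closure of lattice path matroids. Nothing is missing.
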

\begin{proof}
By Lemma~\ref{lem:familyF3}, we know that the bicircular matroid
of each graph in $\mathcal{F}_3$ is a minor of the bicircular matroid
of some graph in $\mathcal{F}_1\cup \mathcal{F}_2$. Since the
class of lattice path matroid is a minor closed class \cite{boninEJC27},
the claim follows by Propositions~\ref{prop:familyF1} and \ref{prop:familyF2}.
\end{proof}

\subsection*{\normalfont{\textsc{Family $\mathcal{F}_4$}}}

Our final family consists of certain graphs with cut vertices. 
The restrictions upon the middle blocks is that each middle
block contains exactly two vertices (no restriction upon the edge set). 
To define the constraints over the end blocks of these graphs, we
depict three possible end blocks. Again, we distinguish some red edges, 
and in this case we distinguish one vertex which corresponds
to the cut vertex of the end block. 
\begin{center}
\begin{tikzpicture}[scale=0.8]

\begin{scope}[xshift=-7cm, yshift=0cm, scale=0.8]
\node [vertex] (0) at (-1,0){};
\node [vertex, label = 0:{$x$}] (1) at (1,0){};
\node [vertex] (2) at (0,2){};
\node[] at (-3,0){$K_3(r,j,0)$};

\draw [edge] (0) to (1);

\path[-, red, dashed]          (0)  edge   [bend left]   (2);
\path[-, red]          (0)  edge   [bend right]   (2);
\path[-, dashed]          (1)  edge   [bend right]   (2);
\path[-]          (1)  edge   [bend left]   (2);

\end{scope}

\begin{scope}[xshift=0cm, yshift=0cm, scale=0.8]
\node [vertex] (0) at (-1,0){};
\node [vertex, label = 0:{$x$}] (1) at (1,0){};
\node [vertex] (2) at (0,2){};
\node[] at (-3,0){$K_3(1,1,0)$};

\draw [edge, red] (0) to (1);

\path[-, red]          (0)  edge   [bend left]   (2);
\path[-, red]          (0)  edge   [bend right]   (2);
\path[-]          (1)  edge   [bend right]   (2);
\path[-]          (1)  edge   [bend left]   (2);

\end{scope}

\begin{scope}[xshift=7cm, yshift=0cm, scale=0.8]
\node [vertex] (0) at (-1,0){};
\node [vertex, label = 0:{$x$}] (1) at (1,0){};
\node [vertex] (2) at (0,2){};
\node[] at (-3,0){$K_3(1,0,0)$};

\foreach \from/\to in {0/1, 1/2}
\draw [edge, red] (\from) to (\to);

\path[-, red]          (0)  edge   [bend left]   (2);
\path[-, red]          (0)  edge   [bend right]   (2);

\end{scope}
\end{tikzpicture}
\end{center}
We first define an auxiliary family $\mathcal{F}_4'$. A graph
$G$ belongs to $\mathcal{F}_4'$ if it is a loopless graph with the following
properties:
\begin{enumerate}
	\item the block graph of $G$ is a path, 
	\item every middle block of $G$ has exactly two vertices, and
	\item each end block of $G$ is a subdivision of at most one red
	edge of either of the graphs above, where $x$ is a cut vertex.
\end{enumerate}

An example of a graph in $\mathcal{F}_4'$ looks as follows, where
one blue and one red edge might be subdivided (dashed edges
represent multiple parallel copies, and dots represent arbitrarily many
blocks of two vertices).

\begin{center}
\begin{tikzpicture}[scale=0.7]

\node [vertex] (0) at (-5,0){};
\node [vertex] (1) at (-3,0){};
\node [vertex] (2) at (-1,0){};
\node [vertex] (3) at (1,0){};
\node [vertex] (4) at (3,0){};
\node [vertex] (5) at (5,0){};

\node [vertex] (a0) at (-7,1){};
\node [vertex] (b0) at (-7,-1){};
\foreach \from/\to in {0/b0}
\draw [edge] (\from) to (\to);


\path[-, dashed]          (a0)  edge   [bend left]   (0);
\path[-]          (a0)  edge   [bend right]   (0);

\path[-, red]          (a0)  edge   [bend left]   (b0);
\path[-, red, dashed]          (a0)  edge   [bend right]   (b0);
\node [vertex] (a6) at (7,1){};
\node [vertex] (b6) at (7,-1){};
\foreach \from/\to in {5/b6, a6/5}
\draw [edge, blue] (\from) to (\to);

\path[-, blue]          (a6)  edge   [bend left]   (b6);
\path[-, blue]          (a6)  edge   [bend right]   (b6);

\foreach \from/\to in {4/5, 1/2, 0/1}
\draw [edge] (\from) to (\to);

\path[-]          (0)  edge   [bend left]   (1);
\path[-]          (0)  edge   [bend right]   (1);
\path[-]          (0)  edge   [bend right=60]   (1);

\path[-]          (2)  edge   [dotted]   (3);
\path[-]          (4)  edge   [bend right]   (3);
\path[-]          (4)  edge   [bend left]   (3);
\path[-]          (4)  edge   [bend right]   (5);
\path[-]          (4)  edge   [bend right=55]   (5);
\path[-]          (4)  edge   [bend left]   (5);
\path[-]          (4)  edge   [bend left=50]   (5);
\path[-]          (4)  edge   [bend left=70]   (5);

\end{tikzpicture}
\end{center}

A graph belongs to $\mathcal{F}_4$ if it can be obtained from
a graph in $\mathcal{F}'_4$ by adding loops to cut vertices and
removing edges.  In particular, any graph whose  block tree is a path,
and its blocks have exactly two vertices, belongs to $\mathcal{F}_4$. 
Also, any graph that satisfies items 1 and 2 above, and its end blocks
are loopless cycles, also belongs to $\mathcal{F}_4$.
Consider a graph $G$ obtained from some $H\in\mathcal{F}_4'$ by adding
loops to one cut vertex. It is not hard to notice that $G$ can also be obtained
from some  $H'\in\mathcal{F}_4'$ by contracting one edge in some middle block.
Thus, we inductively conclude that  every graph in $\mathcal{F}_4$
is a minor of some graph in $\mathcal{F}_4'$. 

\begin{proposition}\label{prop:familyF4}
The bicircular matroid of every graph in $\mathcal{F}_4$
is a lattice path matroid.
\end{proposition}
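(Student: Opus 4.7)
The plan has two phases. First, reduce the problem to the sub-family $\mathcal{F}_4'$: since the class of lattice path matroids is closed under minors~\cite{boninEJC27} and every graph in $\mathcal{F}_4$ is a minor of some graph in $\mathcal{F}_4'$ (as observed just before this statement), it suffices to prove that $B(G)$ is a lattice path matroid for every $G \in \mathcal{F}_4'$.

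Second, for $G \in \mathcal{F}_4'$ with block path $B_1, \ldots, B_k$ (sharing cut vertices $v_1, \ldots, v_{k-1}$), I plan to decompose $B(G)$ as iterated loop-sums. Introducing a loop $\ell_i$ at each cut vertex $v_i$ in both of the two sides the vertex separates, the discussion preceding Lemma~\ref{lem:loopsums} yields
\[
B(G) = B(B_1 + \ell_1) \oplus_{\ell_1} B(B_2 + \ell_1 + \ell_2) \oplus_{\ell_2} \cdots \oplus_{\ell_{k-1}} B(B_k + \ell_{k-1}).
\]
By iterated application of Lemma~\ref{lem:loopsums}, it suffices to show each factor above is a lattice path matroid with the added loop(s) as terminal elements of some interval ordering; for a middle factor this means $\ell_{i-1}$ and $\ell_i$ occupy the minimum and maximum positions, so that the concatenated orderings glue correctly along the shared loops.

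For a middle block $B_i$ with two vertices and $m$ parallel edges, the augmented graph $B_i + \ell_{i-1} + \ell_i$ has a bicircular matroid equal to the uniform matroid $U_{2, m+2}$, which is a lattice path matroid admitting interval orderings with any two chosen elements as minimum and maximum. For an end block, adding the loop at the cut vertex $x = v$ produces (a subdivision of at most one red edge of) $K_3(r, j, 1)$, $K_3(1, 1, 1)$, or $K_3(1, 0, 1)$. The key step is to exhibit an explicit interval presentation with $\ell$ as minimum: for $K_3(r, j, 1)$ with $r, j \ge 1$, the interval ordering $\ell < b_0 < \cdots < b_j < c < a_0 < \cdots < a_r$ together with the three pairwise incomparable intervals $[\ell, c]$, $[b_0, a_{r-1}]$, and $[c, a_r]$ should present $B(K_3(r, j, 1))$; degenerate cases ($r = 0$ or $j = 0$, including the uniform matroid $B(K_3(1, 0, 1)) = U_{3, 5}$) are handled similarly. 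Subdivisions of a red edge are absorbed via Lemma~\ref{lem:Gsubdivision}, using that the red edges form a clone class so we may assume up to isomorphism that the subdivided edge is $a_r$, the maximum of the interval ordering.

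The main obstacle will be the explicit circuit-by-circuit verification in the end-block step: one must match all the bicircular circuit types of $B(K_3(r, j, 1))$ (three-element thetas of parallel edges, the type-(i) circuit $\{b_i, b_k, \ell\}$, and the various four-element circuits of types (i), (ii), and (iii) mixing red, blue, loop, and bottom edges) against the circuits of the transversal matroid given by the three intervals. Once this verification is in hand, iterating Lemma~\ref{lem:loopsums} combines all factors along the shared loops into a single lattice path presentation of $B(G)$, completing the proof.
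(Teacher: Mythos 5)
Your skeleton is the same as the paper's: reduce to $\mathcal{F}_4'$ by minor-closure, write a graph in $\mathcal{F}_4'$ as an iterated loop sum $EB_1\oplus_l MB_1\oplus_l\cdots\oplus_l MB_k\oplus_l EB_2$, observe that each augmented middle block gives $U_{2,m+2}$ (where any two elements can be placed at the ends of an interval ordering), and then glue with Lemma~\ref{lem:loopsums}. The one place you diverge is the end-block step, and that is exactly where your proof is not finished: you declare the circuit-by-circuit verification of your proposed presentation to be ``the main obstacle'' and defer it. As it stands this is a plan, not a proof. For what it is worth, your intervals $[\ell,c]$, $[b_0,a_{r-1}]$, $[c,a_r]$ do present $B(K_3(r,j,1))$ correctly for $r\ge 1$ (the dependent $3$-sets are precisely the $3$-subsets of the red parallel class and the $3$-subsets of $\{b_0,\dots,b_j,\ell\}$, matching the bicircular circuits, and the $j=0$ case needs no separate treatment since the same formula yields $U_{3,5}$ when $r=1$, $j=0$), so the gap is fillable; but you should also record that after subdividing the maximal red edge $a_r$ via Lemma~\ref{lem:Gsubdivision}, the loop $\ell$ remains the minimum of the new interval ordering (this is what Lemma~\ref{lem:copar} gives and what the subsequent loop sum requires). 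The paper avoids your verification entirely: it already knows from Proposition~\ref{prop:familyF3} that the end block with its loop lies in $\mathcal{F}_3$ and hence has a lattice path bicircular matroid; the black parallel class together with the loop is a fundamental flat, so by Proposition~\ref{prop:boninff} it is a final segment of some interval ordering, and since that flat is a set of clones the loop can be permuted to the extreme position. If you want a complete self-contained argument you must either carry out your circuit check or substitute this fundamental-flat argument for it.
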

\begin{proof}
Recall that lattice path matroids are closed under minors~\cite{boninEJC27}.
Thus, it suffices to show that all graphs in $\mathcal{F}_4'$
have  lattice path bicircular matroids.  To begin with, consider the following graphs.
\begin{center}
\begin{tikzpicture}[scale=0.7]

\node [vertex] (0) at (-5,3){};
\node [vertex] (a) at (-5,1){};
\node [vertex] (1) at (-3,2){};
\node[] at (-2,1){$K_3(r,j,1)$};

\path[-, dashed]          (0)  edge   [bend left]   (1);
\path[-]          (0)  edge   [bend right]   (1);

\path[-, red]          (0)  edge   [bend left]   (a);
\path[-, red, dashed]          (0)  edge   [bend right]   (a);
%
\node [vertex] (n) at (3,2){};
\node [vertex] (m) at (5,2){};
\node[] at (4,1){$mK_2'$};

\path[-]          (n)  edge   [bend right]   (m);
\path[-, dashed]          (n)  edge   [bend left]   (m);

\foreach \from/\to in {1/a}
\draw [edge] (\from) to (\to);

\path[-]	     (n) edge [min distance=1cm]  (n);
\path[-]	     (m) edge [min distance=1cm]  (m);
\path[-]	     (1) edge [min distance=1cm, out= 90, in= 0]  (1);

\end{tikzpicture}
\end{center}
On the one hand, the bicircular matroid of a subdivision $G$ of at most one red edge
of $K_3(r,j,1)$ is a lattice path matroid (Proposition~\ref{prop:familyF3}).
Clearly, all black parallel edges together with the loop, are a fundamental flat
of $B(G)$. So, by  Proposition~\ref{prop:boninff}, we know
that there is an interval ordering where these edges (and the loop) are 
a final segment. Moreover, since this is a set of clone edges, we can choose
an interval ordering of $E(G)$ where the loop is the maximum element.
On the other hand,  the bicircular matroid of $mK_2'$ is the uniform matroid
$U_{2,m+2}$, so it is a lattice path matroid. 
Since any pair of elements in a uniform matroid are clones, 
then there is an interval ordering of the edge set of $mK_2'$ where
the loops are the minimum and maximum elements. 
Recall that the loop sum of a pair of graphs is obtained
by gluing two graphs over a loop, and then deleting this loop. Now,
notice that  any graph in $\mathcal{F}_4'$ is a loop sum of the form
\[
EB_1\oplus_l MB_1\oplus_l \cdots \oplus_l MB_k \oplus_l EB_2,
\]
where $EB_1$ and $EB_2$ are a subdivision of at most
one red edge of either $K_3(r,j,1)$, $K_3(1,1,1)$ or $K_3(1,0,1)$, 
and the graphs $MB_i$ are a graph on two vertices with exactly one loop
in each vertex. Thus, by the arguments above, and by 
Lemma~\ref{lem:loopsums} the bicircular matroid of the loop sum above
is a lattice path matroid. Therefore, the bicircular matroid
of each graph in $\mathcal{F}_4'$ is a lattice path matroid. 
So, the claim follows because every graph in $\mathcal{F}_4$ is a
minor of some graph in $\mathcal{F}_4'$.
\end{proof}

\subsection*{\normalfont{\textsc{Efficient recognition}}}

To conclude this section, we observe that it takes linear time to recognize
the graph families $\mathcal{F}_i$ with $i\in\{0,1,2,3,4\}$.
Consider a connected graph $G$ and let $H$
be the graph obtained from $G$ after contracting all subdivided edges.
By definition of  each $\mathcal{F}_i$, if $G$  belongs to some
$\mathcal{F}_i$ with $i\in\{0,1,2,3,4\}$, then $H$ must be either of the following
graphs:
\begin{enumerate}
		\item $K_{2,3},~K'_{2,3},~K''_{2,3},~K^\ast_{2,3}$ or $K_4$,
		\item $G(r,d,b)$ for some non-negative integers $r$ and $b$,
		\item $2K_3(r,b)$ for some non-negative integers $r$ and $b$,
		\item $K_3(r,j,l)$ for some non-negative
		integers $r$, $j$ and $l$, or a graph on two vertices, or
		\item a graph whose block tree is a path, every middle block has
		exactly two vertices, and each block has two vertices or it is
		some  graph $K_3(r,j,l)$ for some non-negative integers $K_3(r,j,l)$.
\end{enumerate}
Each of these classes can be recognized in linear time
with respect to the size of the edge set of the input graph. Moreover, 
after contracting subdivided edges of graph $G$, and  keeping track of which
edges correspond to subdivision classes, we can determine if $G$
belongs to $\bigcup_{i=0}^4\mathcal{F}_i$
in linear time with respect to $|E(G)|$. By these arguments we observe
the following.

\begin{observation}\label{obs:recognition}
Given an input graph $G$, there is a linear time algorithm (in $|E(G)|$)
to decide if $G$ belongs to the union
\[
\mathcal{F}_0\cup \mathcal{F}_1\cup \mathcal{F}_2\cup
\mathcal{F}_3\cup \mathcal{F}_4.
\]
\end{observation}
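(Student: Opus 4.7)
The plan is to reduce the recognition problem to checking a small amount of structural data that can be computed in linear time. First I would compute the block-cut tree of the input graph $G$ with a depth-first search in $O(|V(G)|+|E(G)|)=O(|E(G)|)$ time (we may assume $G$ has no isolated vertices). While traversing $G$, I would simultaneously identify every maximal internally-degree-$2$ path, recording its length together with its two endpoint vertices. Contracting all these paths yields a ``core'' graph $H$ that has the same block structure as $G$ but no internal degree-$2$ vertices along any subdivided edge.

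Next I would inspect the block-cut tree. If it is not a path (or a single block, possibly with loop pendants in $\mathcal{F}_3$), then $G$ cannot belong to $\mathcal{F}_4$, nor to $\mathcal{F}_0$--$\mathcal{F}_3$, since each of the latter has a block-cut tree that is either trivial or has a bounded-size attached loop structure. In the path case, end blocks and middle blocks are treated separately: every middle block must have exactly two vertices (a constant-time check per block), and each end block must, after suppressing degree-$2$ vertices, match one of the bounded-skeleton templates of type $K_3(r,j,l)$ up to subdivision of at most one red edge. If the block-cut tree is trivial, I would test whether the core block $H$ is isomorphic to one of the finitely many template skeletons listed in the statement ($K_{2,3}$, $K'_{2,3}$, $K''_{2,3}$, $K^\ast_{2,3}$, $K_4$, or the skeleton of $G_1(r,d,b)$, $2K_3(r,b)$, $K_3(r,j,l)$, or a two-vertex graph) by comparing a constant-size canonical encoding; the parameters $r,b,d,j,l$ are recovered in linear time from parallel-edge multiplicities, which can be counted once per edge.

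Finally, for each template I would verify that the recorded subdivision lengths respect the constraint that only certain coloured edges of the template may be subdivided: in $\mathcal{F}_0$ the allowed subdivided edges; in $\mathcal{F}_1$ and $\mathcal{F}_2$ at most one red and one blue edge; in $\mathcal{F}_3$ the appropriate red/blue constraint; in $\mathcal{F}_4$ only red edges of end blocks. Each of these checks is $O(1)$ per block after the template has been matched. Summing over all blocks, the total running time is $O(|E(G)|)$, as desired.

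The main obstacle I anticipate is not algorithmic but combinatorial bookkeeping: the templates must be enumerated completely and the recognition must correctly identify which edges play the role of red or blue in each template, since that role depends on parallel-class multiplicities and on adjacency to particular distinguished vertices. However, the templates introduced in Section~\ref{sec:GF} each have a bounded skeleton together with a finite number of multiplicity parameters, so isomorphism testing reduces to matching against a fixed finite list, which is constant-time per block. Once this case analysis is completed, linearity follows directly from the linearity of block-cut tree computation, degree-$2$ suppression, and template matching.
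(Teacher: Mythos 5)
Your proposal is correct and follows essentially the same route as the paper: contract all subdivided (internally degree-$2$) paths to obtain a bounded-skeleton core, match that core against the finite list of templates with multiplicity parameters $r,b,d,j,l$, and then verify that the recorded subdivision classes sit only on the permitted coloured edges, all in time $O(|E(G)|)$. The paper states this more tersely, but the decomposition and the checks are the same.
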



\section{Excluded minors and graph families}
\label{sec:EMGF}

We have arrived to the most technical section of this work. Here,
we show if a connected bicircular matroid $B(G)$ is
$ex_B(\mathcal{L})$-minor free, then $G$ belongs to a family
$\mathcal{F}_i$ for some $i\in\{0,1,2,3,4\}$.
The set $ex_B(\mathcal{L})$ consists of the matroids introduced in
Section~\ref{sec:EM}; namely,  the matroids
\[
\mathcal{W}^3,~C^{2,4},~R^3,~R^4,~D^4,~A^3,~B^1, \text{ and } S^1.
\]

\begin{proposition}\label{prop:F0-F4}
Let $G$ be a graph. If $B(G)$ is an $ex_B(\mathcal{L})$-minor free
connected matroid, then
\[
G\in \mathcal{F}_0\cup\mathcal{F}_1\cup
\mathcal{F}_2\cup\mathcal{F}_3\cup \mathcal{F}_4.
\]
\end{proposition}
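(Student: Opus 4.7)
The plan is to build on Proposition~\ref{prop:blockpath} and then do a careful case analysis against each of the remaining excluded minors. First, since $B(G)$ has no $C^{2,4}$-minor, Proposition~\ref{prop:blockpath} gives that the block tree of $G$ is a path and every middle block contains exactly two vertices; because $B(G)$ is connected, $G$ has no loopless leaves, so every end block contains a cycle. I would then split the argument according to the number of blocks of $G$.

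In the multiple-block case, the aim is to show $G\in\mathcal{F}_4$. The middle-block constraint is already met by Proposition~\ref{prop:blockpath}, so the work concentrates on the two end blocks. For an end block $B$ with cut vertex $x$, I would contract the rest of $G$ down to a configuration that becomes a loop on $x$ and then analyze what $B$ can look like relative to this loop. Each of the remaining excluded minors $\mathcal{W}^3$, $R^3$, $R^4$, $A^3$, $D^4$, $B^1$, $S^1$ forbids a specific substructure of $B$: for instance, $\mathcal{W}^3$ rules out a triangle in $B$ with a cycle attached at each of its three vertices, and $R^3$ rules out two distinct vertices of $B-x$ each supporting an independent cycle. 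Combining these restrictions should force $B$, up to adding loops at $x$ and deleting edges, to be a subdivision of at most one red edge of one of $K_3(r,j,0)$, $K_3(1,1,0)$, or $K_3(1,0,0)$, which is exactly the end-block prescription in the definition of $\mathcal{F}_4'$ (and hence of $\mathcal{F}_4$).

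In the single-block case, $G$ is $2$-connected, possibly with loops. I would organize the analysis by the underlying simple graph $\widehat{G}$. If $|V(\widehat{G})|\le 2$, then $G\in\mathcal{F}_3$. If $\widehat{G}$ is a cycle, the excluded minors restrict where loops and parallel chords may sit, placing $G$ in $\mathcal{F}_3$ via items (2)--(3) of its definition. If $\widehat{G}$ is a theta graph (a subdivision of $K_{2,3}$) or $K_{2,3}$ with minor enrichments, the excluded minors control which vertices may receive loops and where parallel classes can be inflated, so that $G\in\mathcal{F}_0\cup\mathcal{F}_1\cup\mathcal{F}_2$. Finally, if $\widehat{G}$ contains a richer $3$-connected structure, then $S^1$, $R^4$, $D^4$, and $B^1$ force $\widehat{G}$ to be a subdivision of at most two edges of $K_4$ with no added loops or multiplicities, placing $G\in\mathcal{F}_0$.

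The main obstacle is this final layer of case analysis for $2$-connected $G$: one must show that within each simple-graph class every configuration outside $\bigcup_{i=0}^{4}\mathcal{F}_i$ is ruled out by at least one member of $ex_B(\mathcal{L})$. Concretely, for every ``rogue'' configuration (an unwanted loop placement, an extra parallel class, an additional chord, a subdivided edge of the wrong color, etc.) one must exhibit an explicit sequence of contractions and deletions witnessing an $ex_B(\mathcal{L})$-minor. Keeping this enumeration exhaustive yet non-redundant, while simultaneously tracking how loops, parallel classes, and subdivided paths interact across end blocks and the path of middle blocks, will be the chief technical burden of the proof.
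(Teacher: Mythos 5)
Your overall strategy---exhaustive case analysis against the eight excluded minors, split into a cut-vertex case (aiming for $\mathcal{F}_4$) and a $2$-connected case---is in the same spirit as the paper's proof, and your treatment of the multiple-block case (contracting the rest of $G$ to a loop at the cut vertex and constraining the end blocks) essentially matches Observation~\ref{obs:finalcase} and Proposition~\ref{prop:excludedcase}. However, there is a genuine gap in your $2$-connected case: the taxonomy by underlying simple graph $\widehat{G}$ ($|V(\widehat{G})|\le 2$, cycle, theta graph or $K_{2,3}$ with enrichments, ``richer $3$-connected structure'') is not exhaustive. A $2$-connected outerplanar graph such as a triangulated polygon on many vertices, or a fan, is none of these: it is not a cycle, contains no subdivision of $K_{2,3}$ or $K_4$ (being outerplanar), and carries no $3$-connected substructure, yet it has arbitrarily many branch vertices. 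Your plan has no mechanism that even detects such graphs, let alone excludes them, so the ``chief technical burden'' you describe is not merely large but unbounded as stated.

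The missing idea is the paper's boundedness step. The paper proves (Lemma~\ref{lem:2connected}) that if $G$ is $2$-connected of minimum degree three with a pair of non-adjacent vertices and $B(G)$ is $ex_B(\mathcal{L})$-minor free, then $G$ contains a subdivision of $G_1$ (the $4$-cycle with two opposite edges doubled); combined with the analysis of supergraphs of $G_1$ and of $K_4$, this yields Proposition~\ref{prop:boundedsubdivision}: every $2$-connected $G$ with $B(G)$ excluded-minor free is a subdivision of a graph on at most four vertices. Only after that bound is in place does the finite case analysis over $G_1(r,d,b)$, $2K_3(r,b)$, $K_3(r,j,l)$ and two-vertex graphs become possible. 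You would need to supply an analogue of this reduction (or directly show that any $2$-connected graph with five or more branch vertices forces one of the eight excluded minors) before your enumeration can be made exhaustive. A secondary, smaller omission: in the cut-vertex case your list of admissible end blocks omits the two-vertex end blocks (the graphs $mK_2''$ with a loop at the non-cut vertex), which the definition of $\mathcal{F}_4'$ requires and which the paper handles separately.
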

\begin{proof}
This proof is divided in three main cases:
Non-outerplanar graphs (Proposition~\ref{prop:nonouterplanar}),
$2$-connected graphs (Proposition~\ref{prop:2connected}), and
outerplanar graphs with a cut vertex (Proposition~\ref{prop:excludedcase}).
The claim follows from these three propositions.
\end{proof}

The proofs of  Propositions~\ref{prop:nonouterplanar},
\ref{prop:2connected}, and~\ref{prop:excludedcase} are subdivided 
in several cases. Regardless of the particular subcase in turn, the proof
idea is very simple:
Given a graph $G$ we verify that if $G$ does not belong to some family
$\mathcal{F}_i$, then  there is a graph $G'$ with a minor $H$
such that $B(G)\cong B(G')$ and $B(H)\in ex_B(\mathcal{L})$ (in most
cases $G' = G$).  In Figures~\ref{fig:C24}, \ref{fig:bicircularEX}
and~\ref{fig:BB}, we depict bicircular and affine presentations of 
matroids in $ex_B(\mathcal{L})$.

\subsection*{\normalfont{\textsc{Non-outerplanar graphs}}}

A well-known characterization of outerplanar graphs states that a graph
is an outerplanar graph if it does not contain a subdivision of
$K_4$ nor a subdivision of $K_{2,3}$ \cite{chartrandIHP}. We first consider
the case when
$G$ contains a subdivision of $K_4$, and later we consider the general case
of non-outerplanar graphs.

Recall that the bicircular matroid of $K_4$ is $U_{4,6}$, and that the
operation of  subdividing an edge $e$ in a graph $G$, translates to series
extension of $e$ in $B(G)$. Lemma~\ref{lem:uniformextensions},
asserts that if $M$ is a series extension of at least  three different
elements of a uniform matroid (of rank at least two and positive corank), 
then $M$ contains a $C^{2,4}$-minor. So,
we conclude that if $G$ contains a subdivision of at least three
different edges of $K_4$, then $B(G)$ contains
a $C^{2,4}$-minor.

\begin{lemma}\label{lem:K4subdivision}
Let $G$ be a graph such that $B(G)$ is an $ex_B(\mathcal{L})$-minor free
connected matroid. If $G$ contains a subdivision of $K_4$, then
$G$ equals a subdivision of at most two edges of $K_4$.
In particular, this implies that $G\in\mathcal{F}_0$.
\end{lemma}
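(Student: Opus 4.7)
The plan is to prove the lemma in two steps: first bounding the number of subdivided edges of $K_4$ using Lemma~\ref{lem:uniformextensions}, and then showing that $G$ cannot have any structure beyond this subdivision, with the excluded minor $A^3$ arising in each remaining case.

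For the first step, I will invoke Lemma~\ref{lem:uniformextensions} with $r=4$, $n=6$. Since $B(K_4)=U_{4,6}$, if the given $K_4$-subdivision $H$ in $G$ subdivides three or more edges of $K_4$, then $B(H)$ is a series extension of three or more elements of $U_{4,6}$, and hence contains $C^{2,4}$ as a minor. Since $H$ is a subgraph of $G$, the matroid $B(H)$ is a restriction of $B(G)$, so $B(G)$ contains $C^{2,4}$, contradicting the hypothesis.

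For the second step, suppose for contradiction that there exists an edge $e\in E(G)\setminus E(H)$. By Proposition~\ref{prop:blockpath}, the block tree of $G$ is a path with middle blocks of exactly two vertices; since $H$ is $2$-connected with four branch vertices, $H$ lies in a single block $B_H$ of $G$ that must be either $G$ itself or an end block of $G$. If $e$ lies outside $B_H$, then $B_H$ is a proper end block, and since $B(G)$ is connected the other end block contains a cycle. Contracting all intervening $K_2$-middle blocks, then contracting the far cycle down to a single graph-loop $\ell$ at the cut vertex of $B_H$, and finally contracting $H$ to a copy of $K_4$, exhibits $B(K_4+\ell)$ as a minor of $B(G)$. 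A direct check shows that contracting any edge of $K_4$ incident with the vertex supporting $\ell$ produces the graph on three vertices with a loop, two pairs of parallel edges through the loop vertex, and a single edge between the remaining two vertices — which is precisely $A_3$ from Figure~\ref{fig:bicircularEX} — and hence $B(G)$ has $A^3$ as a minor, a contradiction. If instead $e$ lies in $B_H$, the same reduction applies when $e$ is a graph-loop at a vertex of $H$; when $e$ is a non-loop edge, I will analyse the position of its endpoints and argue that either $H\cup\{e\}$ hosts a $K_4$-subdivision with strictly more subdivided $K_4$-edges than $H$ (iterating then contradicts Step 1), or the minor of $B(G)$ realised is $B(K_4+\text{parallel edge})$, whose contraction by the parallel edge again yields $A^3$ through the analogous three-vertex graph.

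The main obstacle will be the case analysis of Step 2 when $e$ lies inside $B_H$: I need to classify the possible positions of $e$ relative to $H$ (joining two branch vertices, a branch and an internal subdivision vertex, two internal subdivision vertices, or attaching a new vertex which by $2$-connectedness of $B_H$ reaches $H$ via two internally disjoint paths) and verify that in each subcase either the $K_4$-subdivision structure enriches (contradicting Step 1) or the specific minor computation $B(K_4+\text{loop or parallel})/f\cong A^3$ applies. The conclusion $G\in\mathcal{F}_0$ is then immediate from the third item in the definition of $\mathcal{F}_0$, since $G$ is a subdivision of at most two edges of $K_4$.
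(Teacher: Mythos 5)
Your proposal is correct and follows essentially the same route as the paper: Step 1 is exactly the paper's application of Lemma~\ref{lem:uniformextensions} to $B(K_4)=U_{4,6}$, and Step 2 reduces, as in the paper, to checking that $K_4$ plus a loop or plus a parallel edge (the paper's $K_4''$ and $K_4'$) acquires an $A^3$-minor by contracting an edge at the relevant vertex. The only difference is economy: the paper avoids your block-tree discussion and the ``enriched subdivision'' branch by simply contracting $H$ down to $K_4$, after which any edge of $E(G)\setminus E(H)$ becomes a loop or a parallel edge, so all of your subcases collapse into the two graphs $K_4'$ and $K_4''$.
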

\begin{proof}
By the arguments preceding this lemma, we know that any subdivision $G$
of at least three edges of $K_4$, satisfies that $B(G)$ has a $C^{2,4}$-minor. 
Thus, if $B(G)$ is  an $ex_B(\mathcal{L})$-minor free matroid, and $G$
is a subdivision of $K_4$, then $G$ equals a subdivision of at most two edges
of $K_4$. We proceed to show that if $G$ contains a subdivision of $K_4$, 
the $G$ is a subdivision of $K_4$. First, we observe that the bicircular matroids
of the following graphs contain some minor in $ex_B(\mathcal{L})$.
\begin{center}

\begin{tikzpicture}[scale=0.8]

\begin{scope}[xshift=-3cm, yshift = 0cm, scale=0.8]
\node [vertex, label = 180:{$z$}] (0) at (-1,0){};
\node [vertex] (1) at (1,0){};
\node [vertex, label = 270:{$y$}] (2) at (0,0.9){};
\node [vertex, label = 0:{$x$}] (3) at (0,2){};
\node[] at (0,-1){$K_4'$};

\foreach \from/\to in {0/1,0/2, 0/3, 1/2, 1/3, 2/3}
\draw [edge] (\from) to (\to);
\path[-]          (0)  edge   [bend left]   (3);

\end{scope}
\begin{scope}[xshift=3cm, yshift = 0cm, scale=0.8]
\node [vertex, label = 180:{$z$}] (0) at (-1,0){};
\node [vertex] (1) at (1,0){};
\node [vertex, label = 270:{$y$}] (2) at (0,0.9){};
\node [vertex, label = 0:{$x$}] (3) at (0,2){};
\node[] at (0,-1){$K_4''$};

\foreach \from/\to in {0/1,0/2, 0/3, 1/2, 1/3, 2/3}
\draw [edge] (\from) to (\to);
\path[-]         (3) edge [min distance=1cm] (3);

\end{scope}
\end{tikzpicture}
\end{center}
Consider the minor of $K_4''$ obtained by contracting the edge $xy$.
This minor is isomorphic to $A_3$ (Figure~\ref{fig:bicircularEX}). Similarly,
by contracting one of the $xz$-edges
of $K_4'$ we also obtain $A_3$ as a minor of $K_4'$. Thus, $B(K_4')$ and
$B(K_4'')$ contain $A^3$ as a minor. 
To conclude the proof, suppose that $G$ contains a subdivision
$H$ of $K_4$, and $G\neq H$. So, there is an edge in $E(G)\setminus E(H)$,
and by contracting $H$ to $K_4$, we find either $K_4'$ or $K_4''$ as a minor of $G$.
This contradicts the fact that $B(G)$ is $ex_B(\mathcal{L})$-minor free, 
and so the claim follows.
\end{proof}

To conclude this first scenario, we consider the general case of
non-outerplanar graphs. We proceed by doing some case checking:
We list all possible minimal supergraphs of $K_{2,3}$ 
that do not belong to $\mathcal{F}_0$, and we see that the bicircular matroids
of these graphs have an $ex_B(\mathcal{L})$-minor.
This (almost) exhaustive case checking is the same technique used
in the rest of this section. 

\begin{proposition}\label{prop:nonouterplanar}
Let $G$ be a non-outerplanar graph. If $B(G)$ is an $ex_B(\mathcal{L})$-minor
free connected matroid, then $G$ belongs to $\mathcal{F}_0$.
\end{proposition}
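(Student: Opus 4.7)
My plan is to reduce the statement to a careful structural analysis of extensions of a $K_{2,3}$-subdivision. By the Chartrand--Harary characterization cited just before Lemma~\ref{lem:K4subdivision}, a non-outerplanar graph contains a subdivision of $K_4$ or of $K_{2,3}$. If $G$ contains a subdivision of $K_4$, then Lemma~\ref{lem:K4subdivision} already yields $G\in\mathcal{F}_0$, so I may assume $G$ contains a subdivision $H$ of $K_{2,3}$ but no subdivision of $K_4$. Let $u_1,u_2$ be the branch vertices of $H$, and let $P_1,P_2,P_3$ be the three internally disjoint $u_1$-$u_2$ paths that make up $H$.

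First I would handle the block structure. Because $B(G)$ is a connected matroid and $ex_B(\mathcal{L})$-minor free, Proposition~\ref{prop:blockpath} tells us that the block tree of $G$ is a path and every middle block has exactly two vertices. Since $H$ is $2$-connected, $H$ is contained in a single block $B_0$ of $G$. I would then argue $G=B_0$: the three internally disjoint paths $P_1,P_2,P_3$ of $H$ already play the role of three ``end-block-like'' cycle structures, so any further block (a cycle or loop hanging off a cut vertex of $B_0$) would let us contract along $H$ to produce one of the graphs $P_1,\dots,P_4$ of Figure~\ref{fig:C24} as a minor of $G$, giving the forbidden $C^{2,4}$-minor.

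Next, with $G$ $2$-connected, I proceed by case analysis on the edges of $G\setminus H$. The no-$K_4$-subdivision hypothesis forbids any chord whose two endpoints lie on internal parts of distinct paths $P_i$ and $P_j$, because together with $u_1,u_2$ such a chord would directly realise a $K_4$-subdivision. The remaining possibilities for adding an edge to $H$ are: (a) a chord between $u_1$ and $u_2$ (equivalently, a fourth $u_1$-$u_2$ path); (b) a chord between a branch vertex and an internal vertex of some $P_i$; (c) a chord between two internal vertices of the same $P_i$; and (d) a loop at some vertex of $H$. For each option I would check whether the resulting graph fits one of the templates $K_{2,3},\ K'_{2,3},\ K''_{2,3},\ K^{\ast}_{2,3}$ with only the designated coloured edges subdivided --- in which case $G\in\mathcal{F}_0$ --- or otherwise exhibit an explicit minor in $ex_B(\mathcal{L})$ using the bicircular presentations in Figures~\ref{fig:C24}, \ref{fig:bicircularEX}, \ref{fig:BB}: a ``too long'' fourth internally disjoint path forces $\mathcal{W}^3$ or $R^3$; parallel edges or a loop on an internal vertex of the ``wrong'' path force $B^1$ or $D^4$; the most exotic combinations force $S^1$ or $R^4$.

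The main obstacle will be the case analysis in the last step. Although the no-$K_4$-subdivision hypothesis drastically restricts $G$ to a series-parallel, essentially ``generalised theta'' structure (so that the list of minimal supergraphs of $H$ to consider is finite), the bookkeeping to separate the additions that keep $G$ inside $\mathcal{F}_0$ from those that force an $ex_B(\mathcal{L})$-minor is delicate. The key technical point is to exploit the red/blue colouring from the definition of $\mathcal{F}_0$: the allowed subdivisions of $K'_{2,3},\ K''_{2,3},\ K^{\ast}_{2,3}$ are designed to cover exactly the ``$K_4$-free'' extensions of $H$, and every other extension can be matched, by inspection of its bicircular presentation, with one of the eight matroids in $ex_B(\mathcal{L})$.
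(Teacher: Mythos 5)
Your reduction to the $2$-connected case is where the argument breaks. You claim that once the $K_{2,3}$-subdivision $H$ (a theta graph) sits inside a block $B_0$, any further block hanging off a cut vertex of $B_0$ --- in particular a loop --- lets you contract along $H$ to one of the graphs $P_1,\dots,P_4$ of Figure~\ref{fig:C24} and hence produce a $C^{2,4}$-minor. This is false: the graph $K''_{2,3}$ is exactly $K_{2,3}$ with a loop attached at an internal vertex of one of the three $u_1$--$u_2$ paths, so that loop is a separate block and its attachment vertex is a cut vertex, yet $B(K''_{2,3})$ is a lattice path matroid (Proposition~\ref{prop:familyF0}) and therefore has no $C^{2,4}$-minor at all. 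The contraction you describe cannot exist because the three cycles of a theta graph pairwise share entire $u_1$--$u_2$ paths, whereas each of $P_1,\dots,P_4$ requires three cycles meeting only at a common centre (three genuine end blocks); one extra loop or pendant cycle attached to a theta graph does not supply that. Since the subdivisions of $K''_{2,3}$ form one of the constituent families of $\mathcal{F}_0$, your argument would wrongly exclude some of the very graphs the proposition is supposed to produce, so the gap is not cosmetic.

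The remainder of your outline (forbidding chords between internal vertices of distinct paths via a $K_4$-subdivision, then sorting the one-edge extensions of $H$ into those matching the $\mathcal{F}_0$ templates and those forcing an $ex_B(\mathcal{L})$-minor) is essentially the paper's strategy: it enumerates the minimal supergraphs of $K_{2,3}$ as $K_4$, $K'_{2,3}$, $K''_{2,3}$, $T_1$ ($K_{2,3}$ plus a loop at a \emph{branch} vertex) and $T_2$ ($K_{2,3}$ plus a chord between the two branch vertices), shows that $T_1$ and $T_2$ force a $C^{2,4}$-minor, and then checks which further extensions and which subdivisions of $K'_{2,3}$, $K''_{2,3}$ and $K^{\ast}_{2,3}$ survive. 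To repair your proof you must replace the block-tree step by precisely this finer analysis of where a loop or pendant cycle may attach: at a branch vertex it yields $T_1$ and hence $C^{2,4}$; at an internal vertex it yields the admissible template $K''_{2,3}$; and a second attachment, or an attachment combined with a parallel class, forces one of $S^1$, $R^4$ or $C^{2,4}$ exactly as in the paper's case checking.
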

\begin{proof}
Throughout this proof we assume that $G$ and $B(G)$ are as in the
hypothesis. Also, if $G$ contains no subdivision 
of $K_{2,3}$ then it contains a subdivision of $K_4$, and the
claim follows by Lemma~\ref{lem:K4subdivision}. So, we also assume
that $G$ contains a subdivision of $K_{2,3}$. Since any subdivision of
$K_{2,3}$ belongs to $\mathcal{F}_0$, we suppose that 
$G$ contains a subdivision $H$ of $K_{2,3}$ plus some edge not in $E(H)$. 
In other words, $G$ contains a subdivision of some supergraph of $K_{2,3}$.
It is not hard to notice that the minimal supergraphs of $K_{2,3}$ are $K_4$,
$K'_{2,3}$, $K''_{2,3}$, and the graphs $T_1$ and $T_2$ depicted below.
\begin{center}

\begin{tikzpicture}[scale=0.8]

\begin{scope}[xshift=-3cm, yshift=0cm, scale=0.9]
\node [vertex, label = 180:{$x$}] (0) at (-1,0){};
\node [vertex] (1) at (1,0){};
\node [vertex] (2) at (-1,2){};
\node [vertex, label = 0:{$y$}] (3) at (1,2){};
\node [vertex] (s) at (0,1){};
\node[] at (0,-1){$T_1$};

\foreach \from/\to in {0/1, 3/2, 0/s, s/3, 0/2, 1/3}
\draw [edge] (\from) to (\to);

\path[-]          (3)  edge   [min distance = 1cm]   (3);
\end{scope}

\begin{scope}[xshift=3cm, yshift = 0cm, scale=0.9]
\node [vertex, label = 180:{$x$}] (0) at (-1,0){};
\node [vertex] (1) at (1,0){};
\node [vertex] (2) at (-1,2){};
\node [vertex, label = 0:{$y$}] (3) at (1,2){};
\node [vertex] (s) at (0,1){};
\node[] at (0,-1){$T_2$};

\foreach \from/\to in {0/1, 3/2, 0/s, s/3, 0/2, 1/3}
\draw [edge] (\from) to (\to);

\path[-]          (0)  edge   [bend right]   (3);

\end{scope}

\end{tikzpicture}
\end{center}
In particular,  if $G$ contains a subdivision of $K_4$, then $G\in \mathcal{F}_0$
(Lemma~\ref{lem:K4subdivision}). Also, observe that by contracting the edge $xy$
of $T_2$, we recover $P_2$ as a minor of $T_2$. Thus $B(T_2)$ contains
a $C^{2,4}$-minor. Moreover, it is not hard to notice that  $B(T_1)\cong B(T_2)$ so,
$B(T_1)$ contains a $C^{2,4}$-minor as well. So, if $G$ is 
$ex_B(\mathcal{L})$-free then $G$ contains no subdivision of $T_1$ nor
of $T_2$. 

We proceed to consider the case when $G$ contains a subdivision of
$K'_{2,3}$. Again,  we show that if $G$ is obtained from $K'_{2,3}$ be adding
one edge, then $B(G)$ contains an $ex_B(\mathcal{L})$-minor. Recall that
$T_1$ has a bicircular matroid with an $ex_B(\mathcal{L})$-minor. Thus, 
if $G'$ is obtained by adding  a loop to $x$ (or $y$) then $B(G')$ has an
$ex_B(\mathcal{L})$-minor. By considering the following cases we show
that $G'$ plus any loop yields a bicircular matroid with an $ex_B(\mathcal{L})$-minor.
\begin{center}

\begin{tikzpicture}[scale=0.8]

\begin{scope}[xshift=-2.5cm, yshift=0cm, scale=0.9]
\node [vertex, label = 180:{$x$}] (0) at (-1,0){};
\node [vertex] (1) at (1,0){};
\node [vertex] (2) at (-1,2){};
\node [vertex, label = 0:{$y$}] (3) at (1,2){};
\node [vertex] (s) at (0,1){};

\foreach \from/\to in {0/1, 3/2, 0/s, s/3, 0/2}
\draw [edge] (\from) to (\to);

\path[-]          (2)  edge   [min distance = 1cm]   (2);
\path[-]          (1)  edge   [bend right]   (3);
\path[-]          (1)  edge   [bend left]   (3);

\end{scope}

\begin{scope}[xshift=2.5cm, yshift = 0cm, scale=0.9]
\node [vertex, label = 180:{$x$}] (0) at (-1,0){};
\node [vertex, label = 270:{$w$}] (1) at (1,0){};
\node [vertex] (2) at (-1,2){};
\node [vertex, label = 0:{$y$}] (3) at (1,2){};
\node [vertex] (s) at (0,1){};

\foreach \from/\to in {0/1, 3/2, 0/s, s/3, 0/2}
\draw [edge] (\from) to (\to);

\path[-]          (1)  edge   [min distance = 1cm, out = 40, in = -40]   (1);
\path[-]          (1)  edge   [bend right]   (3);
\path[-]          (1)  edge   [bend left]   (3);
\end{scope}
\end{tikzpicture}
\end{center}
The graph to the left is $S_1'$, so $B(S_1') = S^1 \in ex_B(\mathcal{L})$; and
if we contract one of the $yw$-edges in the graph to the right, we recover
$R_4''$. If $G$ is a spanning supergraph of $K'_{2,3}$ and does not contain
any subgraph of the above, nor $K_4$, $T_1$ and $T_2$, then it contains one
of the following subgraphs.
\begin{center}

\begin{tikzpicture}[scale=0.8]

\begin{scope}[xshift=-7.5cm, yshift=0cm, scale=0.9]
\node [vertex] (0) at (-1,0){};
\node [vertex, label = 0:{$w$}] (1) at (1,0){};
\node [vertex] (2) at (-1,2){};
\node [vertex, label = 0:{$y$}] (3) at (1,2){};
\node [vertex] (s) at (0,1){};

\foreach \from/\to in {1/0, 0/s, s/3, 3/2, 1/3, 2/0}
\draw [edge] (\from) to (\to);

\path[-]          (1)  edge   [bend right]   (3);
\path[-]          (1)  edge   [bend left]   (3);
\end{scope}

\begin{scope}[xshift=-2.5cm, yshift=0cm, scale=0.9]
\node [vertex] (0) at (-1,0){};
\node [vertex] (1) at (1,0){};
\node [vertex] (2) at (-1,2){};
\node [vertex] (3) at (1,2){};
\node [vertex] (s) at (0,1){};

\foreach \from/\to in {1/0, 0/s, s/3, 3/2}
\draw [edge] (\from) to (\to);

\path[-]          (1)  edge   [bend right]   (3);
\path[-]          (1)  edge   [bend left]   (3);
\path[-]          (2)  edge   [bend right]   (0);
\path[-]          (2)  edge   [bend left]   (0);

\end{scope}

\begin{scope}[xshift=2.5cm, yshift = 0cm, scale=0.9]
\node [vertex] (0) at (-1,0){};
\node [vertex] (1) at (1,0){};
\node [vertex, label = 180:{$z$}] (2) at (-1,2){};
\node [vertex, label = 0:{$y$}] (3) at (1,2){};
\node [vertex, label=180:{$v$}] (s) at (0,1){};

\foreach \from/\to in {1/0, 0/s, s/3, 0/2}
\draw [edge] (\from) to (\to);

\path[-]          (1)  edge   [bend right]   (3);
\path[-]          (1)  edge   [bend left]   (3);
\path[-]          (2)  edge   [bend right]   (3);
\path[-]          (2)  edge   [bend left]   (3);
\end{scope}

\begin{scope}[xshift=7.5cm, yshift = 0cm, scale=0.9]
\node [vertex] (0) at (-1,0){};
\node [vertex] (1) at (1,0){};
\node [vertex] (2) at (-1,2){};
\node [vertex] (3) at (1,2){};
\node [vertex] (s) at (0,1){};

\foreach \from/\to in { 3/2, 0/s, s/3, 0/2}
\draw [edge] (\from) to (\to);

\path[-]          (1)  edge   [bend right]   (3);
\path[-]          (1)  edge   [bend left]   (3);
\path[-]          (1)  edge   [bend right]   (0);
\path[-]          (1)  edge   [bend left]   (0);

\end{scope}

\end{tikzpicture}
\end{center}
We see that $R''_4$ is a minor of the left most graph above,  by contracting
one of $wy$-edges.  The middle left
graph above is $S_1$ so, its bicircular matroid belongs to $ex_B(\mathcal{L})$.
We can see that $A_3$ is a minor of the middle right graph by contracting 
one of the $yz$-edges and the edge $yv$. Finally, 
notice that the rightmost graph is $K^\ast_{2,3}$. 
We already argued that if we add a loop to $K'_{2,3}$, then its bicircular
matroid has an $ex_B(\mathcal{L})$-minor. Also, $T_2$ together with the
previous four graphs
show that if we add a non-loop edge to $K'_{2,3}$ we obtain either
a copy of $K^\ast_{2,3}$, or a graph whose bicircular matroid
has an  $ex_B(\mathcal{L})$-minor. This implies that  if $G$ contains a
subdivision $H$ of $K'_{2,3}$, but no subdivision of $K^\ast_{2,3}$, then
$H = G$ (otherwise, there is some edge in $G$ that is not in $H$, and
with adequate contractions, we obtain one of the graphs above, which
contradicts the fact that $B(G)$ is $ex_B(\mathcal{L})$-minor free).

Recall that we distinguished some blue and red edges in $K'_{2,3}$.
In order to conclude the proof, we must show that if $H$ is a subdivision
of some black edges of $K'_{2,3}$, then $B(H)$ contains
an $ex_B(\mathcal{L})$-minor. In other words, the bicircular matroids
of either of the following graph contain an $ex_B(\mathcal{L})$-minor.
\begin{center}
\begin{tikzpicture}[scale=0.8]

\begin{scope}[xshift=-3cm, yshift=-3.5cm, scale=0.9]
\node [vertex, label = 180:{$x$}] (0) at (-1,0){};
\node [vertex] (1) at (1,0){};
\node [vertex, label = 180:{$z$}] (2) at (-1,2){};
\node [vertex] (3) at (1,2){};
\node [vertex] (s) at (0,1){};
\node [vertex] (s1) at (0,2){};

\foreach \from/\to in {2/s1, s1/3}
\draw [edge] (\from) to (\to);

\foreach \from/\to in  {0/s, s/3}
\draw [edge, red] (\from) to (\to);

\foreach \from/\to in {0/1, 1/3}
\draw [edge, blue] (\from) to (\to);

\path[-]          (0)  edge   [bend left]   (2);
\path[-]          (0)  edge   [bend right]   (2);
\end{scope}

\begin{scope}[xshift=3cm, yshift=-3.5cm, scale=0.9]
\node [vertex] (0) at (-1,0){};
\node [vertex] (1) at (1,0){};
\node [vertex, label = 180:{$z$}] (2) at (-1,2){};
\node [vertex, label = 0:{$y$}] (3) at (1,2){};
\node [vertex] (s) at (0,1){};
\node [vertex] (s1) at (-1.3,1){};

\draw [edge] (2) to (3);

\foreach \from/\to in  {0/s, s/3}
\draw [edge, red] (\from) to (\to);

\foreach \from/\to in {0/1, 1/3}
\draw [edge, blue] (\from) to (\to);

\path[-]          (0)  edge   [bend left]   (2);
\path[-]          (0)  edge   [bend right]   (2);
\end{scope}

\end{tikzpicture}
\end{center}
Notice that if we contract one of the $xz$-edges in the graph to the left, 
we recover $T_1$ as a minor; on the graph to the right we find $T_2$
as a minor by contracting the $yz$-edge. We already argued that neither
$B(T_1)$ nor $B(T_2)$ are $ex_B(\mathcal{L})$-minor free. Thus, 
if $B(G)$ is an $ex_B(\mathcal{L})$-minor free connected matroid, 
and $G$ is a graph that contains subdivision of $K'_{2,3}$
but not of $K^\ast_{2,3}$, then $G\in\mathcal{F}_0$.

Suppose that $G$ contains a subdivision of $K^\ast_{2,3}$. Since
$K^\ast_{2,3}$ is a supergraph of $K'_{2,3}$, then every subdivision
of $K^\ast_{2,3}$ contains a subdivision of $K'_{2,3}$. So, by the arguments
above, if $G$ is a proper supergraph of $K^\ast_{2,3}$, then $B(G)$
has an $ex_B(\mathcal{L})$-minor. Moreover, if $H$ is a subdivision of
$K^\ast_{2,3}$, then $H$ is a subdivision of the blue and red edges
of $K^\ast_{2,3}$. Therefore, the statement of this proposition is settled when
$G$ contains a subdivision of $K_4$, of $K'_{2,3}$ or
a subdivision of $K^\ast_{2,3}$. When
$G$ contains a subdivision of  $K''_{2,3}$ the proof follows analogous
arguments to the ones above. Finally, if $G$ contains no
subdivision of $K_4$, of $K'_{2,3}$, of $K''_{2,3}$ nor of $K^\ast_{2,3}$,
then $G$ contains a subdivision of $T_1$, of $T_2$ or $G$ equals
a subdivision of $K_{2,3}$. Since $B(G)$ is $ex_B(\mathcal{L})$-minor free,
we conclude that $G$ is a subdivision of $K_{2,3}$, so $G\in \mathcal{F}_0$.
All possible cases for non-outerplanar graphs have now been considered.
\end{proof}

\subsection*{\normalfont{\textsc{Subdivisions of $G_1$}}}

We move on to the second case. In this case we consider
$2$-connected graphs. We begin by the subcase when $G$
contains a subdivision of $G_1$. Even though this seems like an arbitrary
assumption, the following lemma shows that this case settles
a broad scenario.

\begin{lemma}\label{lem:2connected}
Let $G$ be a $2$-connected graph of minimum degree three. 
If $G$ contains a pair of non-adjacent vertices and $B(G)$
is $ex_B(\mathcal{L})$-minor free, then $G$ contains a subdivision
of $G_1$. 
\end{lemma}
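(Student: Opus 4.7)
The plan is to argue the contrapositive: assume $G$ is $2$-connected with minimum degree at least three, contains non-adjacent vertices $u,v$, and has no topological $G_1$-minor; I will produce a minor of $B(G)$ belonging to $ex_B(\mathcal{L})$. First, I reduce to the outerplanar case. By Lemma~\ref{lem:K4subdivision}, a graph whose bicircular matroid is $ex_B(\mathcal{L})$-minor free that contains a subdivision of $K_4$ must be itself such a subdivision with at most two edges subdivided. The minimum degree hypothesis rules out any genuine subdivision vertex, and $K_4$ itself has no non-adjacent pair; hence $G$ contains no subdivision of $K_4$. Combining this with Proposition~\ref{prop:nonouterplanar}, together with the observation that no graph of $\mathcal{F}_0$ has minimum degree three with a non-adjacent pair, I may assume $G$ is outerplanar.

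Being $2$-connected and outerplanar, $G$ has a unique Hamilton cycle $H$, and every vertex of $H$ must be incident to an additional edge (either a chord of $H$ or a multi-edge copy of a boundary edge) in order to achieve minimum degree three. The two arcs of $H$ between $u$ and $v$ already provide two internally disjoint $uv$-paths, and the extra edges attached at interior vertices of these arcs supply the raw material for the two pairs of parallel paths demanded by $G_1$. Concretely, I look for four vertices $a,b,d,c$ appearing on $H$ in this cyclic order such that the arcs $[a,b]$ and $[d,c]$ realise the two single-edge paths of $G_1$, the arcs $[b,d]$ and $[c,a]$ realise one of the two $bd$-paths and one of the two $ac$-paths, and a second $bd$-path together with a second $ac$-path are obtained from the additional edges (a chord-path connecting the arc $[a,b]$ to $[d,c]$, or a multi-edge parallel to a boundary edge within $[b,d]$ or $[c,a]$). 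An exhaustive inspection shows that either such a configuration exists and we are done, or else the extra edges are so restrictively placed that $G$ falls into a small catalogue of exceptional outerplanar multigraphs.

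Finally, I rule out each exceptional configuration using the excluded minor hypotheses. A triangle each of whose vertices carries an attached cycle yields a $W_3$-subdivision and hence a $\mathcal{W}^3$-minor of $B(G)$; multi-edges clustered at a single vertex produce an $A^3$- or $R^3$-minor after contractions of parallel classes; a pair of chords spanning a four-cycle with two multi-edges yields a subdivision of one of the four graphs depicted in Figure~\ref{fig:C24} and thus a $C^{2,4}$-minor via Lemma~\ref{lem:uniformextensions}; and the remaining configurations are matched to matroids depicted in Figures~\ref{fig:bicircularEX} and~\ref{fig:BB}. The main obstacle is precisely this last step: because $G_1$ requires two parallel pairs on \emph{opposite} sides of a four-cycle, the absence of a $G_1$-subdivision forces the extra edges of $G$ to cluster on one side of the Hamilton cycle, and one must verify that every such clustering pattern supplies one of the eight matroids of $ex_B(\mathcal{L})$ as a minor of $B(G)$.
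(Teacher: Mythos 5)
Your opening reduction is sound: a subdivision of $K_4$ would force $G=K_4$ by Lemma~\ref{lem:K4subdivision} and the degree hypothesis, contradicting the existence of a non-adjacent pair, and no member of $\mathcal{F}_0$ has minimum degree three together with two non-adjacent vertices, so Proposition~\ref{prop:nonouterplanar} does let you assume $G$ is outerplanar. But from that point on the proposal is a strategy, not a proof, and the gap is exactly where you locate it yourself. The ``exhaustive inspection'' that is supposed to either produce a $G_1$-subdivision or place $G$ in a ``small catalogue of exceptional outerplanar multigraphs'' is never carried out: the catalogue is not exhibited, and nothing in your setup even shows it is finite. You have not bounded $|V(G)|$, so the class of $2$-connected outerplanar multigraphs of minimum degree three with a non-adjacent pair is infinite, and an inspection over Hamilton-cycle chord/multi-edge patterns on arbitrarily long cycles is not an available proof step as stated. (A smaller point: the claim that a $2$-connected outerplanar graph has a \emph{unique} Hamilton cycle fails for multigraphs, e.g.\ $2K_3$, which is outerplanar in the paper's sense.)

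The missing idea is the finiteness reduction. The paper's proof takes the two internally disjoint $xy$-paths $P$ and $Q$ (each of length at least two since $x\not\sim y$), restricts to the subgraph induced on $V(P)\cup V(Q)$, and contracts $P$ and $Q$ to paths of length exactly two, so that everything reduces to graphs on exactly four vertices with a non-adjacent pair. There are then only seven edge-minimal $2$-connected four-vertex graphs of minimum degree three with a non-adjacent pair, namely $G_1$ and six graphs $H_1,\dots,H_6$, and a direct check shows each $B(H_i)$ has a $\mathcal{W}^3$- or $A^3$-minor (via the graphs $W_3$, $A_3$, $A_3'$). If you want to salvage your route, you would need to supply an analogous reduction that caps the number of branch vertices before any case analysis; without it, the argument does not close.
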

\begin{proof}
We may assume that $G$ has exactly four vertices.
Indeed, suppose that $G$ is a $2$-connected graph,
and let $x$ and $y$ be a pair of non-adjacent vertices. Since $G$ is
$2$-connected, then there are a pair of disjoint $xy$-paths $P$ and $Q$
of length at least $2$. Now, consider the subgraph of $G$ induced by the
vertices of $P$ and $Q$, and then  contract $P$ and $Q$ to paths
of length exactly $2$.  Thus, with out loss of generality, we assume that
$G$ is a graph on four vertices, and that  $x$ and $y$ are a pair of non-adjacent
vertices of $G$. The following illustrates all  edge
minimal $2$-connected graphs of minimum degree three, where $xy\not\in E$
(up to isomorphism).

\begin{center}

\begin{tikzpicture}[scale=0.8]

\begin{scope}[xshift=-5cm, yshift=0.5cm, scale=0.9]
\node [vertex] (0) at (-1,0){};
\node [vertex, label=right:{$y$}] (1) at (1,0){};
\node [vertex, label=left:{$x$}]  (2) at (-1,2){};
\node [vertex] (3) at (1,2){};
\node[] at (0,-0.7){\small{$G_1$}};

\path[-]       (3)  edge    (2);
\path[-]         (0)  edge     (1);
\path[-]         (0)  edge   [bend left]   (2);
\path[-]          (0)  edge   [bend right]   (2);
\path[-]          (1)  edge   [bend right]   (3);
\path[-]         (1)  edge   [bend left]   (3);
\end{scope}

\begin{scope}[xshift=0cm, yshift=0.5cm, scale=0.9]
\node [vertex] (0) at (-1,0){};
\node [vertex, label=right:{$y$}] (1) at (1,0){};
\node [vertex, label=left:{$x$}]  (2) at (-1,2){};
\node [vertex] (3) at (1,2){};
\node[] at (0,-0.7){\small{$H_1$}};

\foreach \from/\to in {0/1, 1/3, 2/3, 2/0}
\draw [edge] (\from) to (\to);

\path[-]         (0) edge [min distance=1cm, out=315, in=225] (0);
\path[-]         (1) edge [min distance=1cm, out=315, in=225]  (1);
\path[-]         (2) edge [min distance=1cm] (2);
\path[-]         (3) edge [min distance=1cm] (3);
\end{scope}

\begin{scope}[xshift=5cm, yshift=0.5cm, scale=0.9]
\node [vertex] (0) at (-1,0){};
\node [vertex, label=right:{$y$}] (1) at (1,0){};
\node [vertex, label=left:{$x$}]  (2) at (-1,2){};
\node [vertex, label=right:{$z$}] (3) at (1,2){};
\node[] at (0,-0.7){\small{$H_2$}};

\foreach \from/\to in {0/1, 2/3, 2/0}
\draw [edge] (\from) to (\to);

\path[-]         (0) edge [min distance=1cm, out=315, in=225] (0);
\path[-]         (2) edge [min distance=1cm] (2);

\path[-]          (1)  edge   [bend right]   (3);
\path[-]         (1)  edge   [bend left]   (3);
\end{scope}

\begin{scope}[xshift=-7.5cm, yshift=-3cm, scale=0.9]
\node [vertex] (0) at (-1,0){};
\node [vertex, label=right:{$y$}] (1) at (1,0){};
\node [vertex, label=left:{$x$}]  (2) at (-1,2){};
\node [vertex, label=right:{$z$}] (3) at (1,2){};
\node[] at (0,-0.7){\small{$H_3$}};

\foreach \from/\to in {0/1, 1/3, 2/3, 2/0, 3/0}
\draw [edge] (\from) to (\to);

\path[-]         (1) edge [min distance=1cm, out=315, in=225]  (1);
\path[-]         (2) edge [min distance=1cm] (2);

\end{scope}

\begin{scope}[xshift=-2.5cm, yshift=-3cm, scale=0.9]
\node [vertex] (0) at (-1,0){};
\node [vertex, label=right:{$y$}] (1) at (1,0){};
\node [vertex, label=left:{$x$}]  (2) at (-1,2){};
\node [vertex, label=right:{$z$}] (3) at (1,2){};
\node[] at (0,-0.7){\small{$H_4$}};

\foreach \from/\to in {0/1, 2/3, 2/0, 3/0}
\draw [edge] (\from) to (\to);

\path[-]         (2) edge [min distance=1cm] (2);
\path[-]          (1)  edge   [bend right]   (3);
\path[-]         (1)  edge   [bend left]   (3);

\end{scope}

\begin{scope}[xshift=2.5cm, yshift=-3cm, scale=0.9]
\node [vertex] (0) at (-1,0){};
\node [vertex, label=right:{$y$}] (1) at (1,0){};
\node [vertex, label=left:{$x$}]  (2) at (-1,2){};
\node [vertex, label=right:{$z$}] (3) at (1,2){};
\node[] at (0,-0.7){\small{$H_5$}};

\foreach \from/\to in {0/1, 2/0, 3/0}
\draw [edge] (\from) to (\to);

\path[-]          (2)  edge   [bend right]   (3);
\path[-]         (2)  edge   [bend left]   (3);
\path[-]          (1)  edge   [bend right]   (3);
\path[-]         (1)  edge   [bend left]   (3);

\end{scope}

\begin{scope}[xshift=7.5cm, yshift=-3cm, scale=0.9]
\node [vertex] (0) at (-1,0){};
\node [vertex, label=right:{$y$}] (1) at (1,0){};
\node [vertex, label=left:{$x$}]  (2) at (-1,2){};
\node [vertex, label=right:{$z$}] (3) at (1,2){};
\node[] at (0,-0.7){\small{$H_6$}};

\foreach \from/\to in {0/1, 2/0}
\draw [edge] (\from) to (\to);

\path[-]         (0) edge [min distance=1cm, out=315, in=225] (0);
\path[-]          (2)  edge   [bend right]   (3);
\path[-]         (2)  edge   [bend left]   (3);
\path[-]          (1)  edge   [bend right]   (3);
\path[-]         (1)  edge   [bend left]   (3);

\end{scope}
\end{tikzpicture}
\end{center}

To conclude the proof, we show that for each $i\in\{1,\dots, 6\}$, 
the bicircular matroid $B(H_i)$ has an $ex_B(\mathcal{L})$-minor.
For instance, by removing any loop of $H_1$, we obtain a subdivision of
$W_3$ (Figure~\ref{fig:bicircularEX}), thus
$B(H_1)$ contains a $\mathcal{W}^3$-minor. Similarly, by contracting
one of the $yz$-edges of $H_2$, we recover $W_3$ as a minor of $H_2$. 
Actually, by contracting one $yz$-edge in either of $H_3$, $H_4$ or $H_6$,
 we find $A_3'$ as a minor of these graphs. 
 Finally, by contracting one $yz$-edge of $H_5$ we recover $A_3$.
 Therefore, if $B(G)$ is $ex_B(\mathcal{L})$-minor
free, then $G$ contains a subdivision of $G_1$.
\end{proof}

At this point it is convenient to have a clear image of the structure of the graphs
$G_1(r,d,b)$ (see, for instance, Figure~\ref{fig:H1}). With the same
procedure as in previous occasions, 
we begin by studying which supergraphs of $G_1$ have a bicircular
matroid with an $ex_B(\mathcal{L})$-minor. In this case, 
these graphs are the following ones.

\begin{center}
\begin{tikzpicture}[scale=0.8]

\begin{scope}[xshift=-3cm, yshift=0cm, scale=0.8]
\node [vertex] (0) at (-1,0){};
\node [vertex] (1) at (1,0){};
\node [vertex, label = left:{$x$}] (2) at (-1,2){};
\node [vertex, label = right:{$y$}] (3) at (1,2){};
\node[] at (0,-1){$G_1'$};

\foreach \from/\to in {0/1, 2/3}
\draw [edge] (\from) to (\to);

\path[-]          (0)  edge   [bend left]   (2);
\path[-]          (0)  edge   [bend right]   (2);
\path[-]          (1)  edge   [bend right]   (3);
\path[-]          (1)  edge   [bend left]   (3);
\path[-]          (2)  edge   [bend left]   (3);
\end{scope}

\begin{scope}[xshift=3cm, yshift=0cm, scale=0.8]
\node [vertex] (0) at (-1,0){};
\node [vertex] (1) at (1,0){};
\node [vertex, label = left:{$x$}] (2) at (-1,2){};
\node [vertex, label = right:{$y$}] (3) at (1,2){};
\node[] at (0,-1){$G_1''$};

\foreach \from/\to in {0/1, 2/3}
\draw [edge] (\from) to (\to);

\path[-]          (0)  edge   [bend left]   (2);
\path[-]          (0)  edge   [bend right]   (2);
\path[-]          (1)  edge   [bend right]   (3);
\path[-]          (1)  edge   [bend left]   (3);
\path[-]         (3) edge [min distance=1cm] (3);
\end{scope}

\end{tikzpicture}
\end{center}
By contracting one $xy$-edge in $G_1'$ or in $G_1''$ we
see that $A_3$ is a minor of these graphs, thus $B(G_1')$ and 
$B(G_1'')$ have an $A^3$-minor. This implies that if $G$ is a supergraph
of $G_1$, and $B(G)$ is a connected
$ex_B(\mathcal{L})$-minor free matroid, then $G$ equals 
$G_1(r,d,b)$ for some non-negative integers $r$, $d$ and $b$. Furthermore,
if $G$ contains a subdivision $H$ of one of the previous graphs, then
$G = H$; otherwise, we could find a subdivision of  either $G'_1$ or
of $G''_1$ in $G$. 

\begin{lemma}\label{lem:G1subdivision}
Let $G$ be a graph such that $B(G)$ is an $ex_B(\mathcal{L})$-minor free
connected matroid. If $G$ contains a subdivision of $G_1$, then
$G\in \mathcal{F}_0\cup \mathcal{F}_1$.
\end{lemma}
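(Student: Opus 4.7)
The strategy is to isolate the ``topological core'' of $G$, show it must equal $G_1(r,d,b)$, and then constrain which edges of $G_1(r,d,b)$ are subdivided in $G$.

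First, I would suppress every internal degree-$2$ vertex of $G$, producing a graph $G^{\ast}$ of which $G$ is a subdivision. Since $B(G^{\ast})$ is obtained from $B(G)$ by contracting the corresponding series elements, $B(G^{\ast})$ is itself a connected $ex_B(\mathcal{L})$-minor free bicircular matroid. Because $G$ contains a subdivision of $G_1$, the graph $G^{\ast}$ contains $G_1$ as a subgraph. The discussion preceding the lemma, where $B(G_1')$ and $B(G_1'')$ were shown to have $A^3$-minors, then forces $G^{\ast}=G_1(r,d,b)$ for some non-negative integers $r,d,b$; in particular no edge outside the allowed parallel classes of $G_1(r,d,b)$ survives in $G^{\ast}$.

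Next, I would identify which edges of $G_1(r,d,b)$ are subdivided in $G$. The target is to show that only one red edge and one blue edge can be subdivided (in the coloring of Figure~\ref{fig:H1}), placing $G$ in $\mathcal{F}_1$. For this I would argue by contradiction, splitting into three cases: (i) a diagonal edge is subdivided; (ii) two edges of the same clone class (two reds or two blues) are subdivided; (iii) an uncolored edge, i.e., a top or bottom edge of $G_1(r,d,b)$ in a regime of $(r,d,b)$ where it falls outside the red and blue classes, is subdivided. In each case, contracting all edges of $G$ except a small witness subgraph should reveal a minor isomorphic to one of $\mathcal{W}^3$, $A^3$, $R^3$, $R^4$, or $D^4$; Lemma~\ref{lem:uniformextensions} is likely useful in case~(ii) to force a $C^{2,4}$-minor once three coparallel subdivisions sit on a common uniform substructure.

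The main obstacle will be the case analysis of the second step, in particular ensuring that the witness minor is found uniformly over all admissible $(r,d,b)$. To tame the bookkeeping, I would first verify the forbidden minor at the smallest $(r,d,b)$ for which the offending edge exists and then invoke the fact that parallel edges and loops at the same vertex are clones in the bicircular matroid: contracting or deleting extra clones returns to the base case, while a clone-extension argument in the spirit of Lemmas~\ref{lem:cloneEX} and~\ref{lem:triangles} propagates the minor back to the original $(r,d,b)$. Once this analysis is complete, $G$ is a subdivision of at most one red edge and at most one blue edge of $G_1(r,d,b)$, hence $G\in\mathcal{F}_1\subseteq \mathcal{F}_0\cup\mathcal{F}_1$.
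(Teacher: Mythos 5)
Your overall architecture (pass to the topological core, pin it down as $G_1(r,d,b)$ using $G_1'$ and $G_1''$, then restrict the subdivided edges) is the same as the paper's, and your treatment of two non-parallel edges of one colour class and of black top/bottom edges via small witness minors ($R^4$, $D^4$, etc.) matches what the paper does. But there is a genuine gap in the way you set up the second step: you frame cases (i)--(iii) as a proof by contradiction, claiming each produces an $ex_B(\mathcal{L})$-minor, and you only use $\mathcal{F}_0$ as a trivial superset in the last line. That cannot work, because some of those subdivisions do \emph{not} create any forbidden minor. Concretely, take $G_1=G_1(0,0,0)$ and subdivide \emph{both} copies of one doubled edge once each. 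This falls under your case (ii), but $B(G_1)\cong U_{4,6}$ and the resulting matroid is a series extension of two elements of $U_{4,6}$, which is a lattice path matroid by Lemma~\ref{lem:copar} (equivalently Lemma~\ref{lem:Gsubdivision}); being a lattice path matroid, it is $ex_B(\mathcal{L})$-minor free, so the contradiction you are hunting for does not exist. Your appeal to Lemma~\ref{lem:uniformextensions} does not rescue this: that lemma needs \emph{three} series-extended elements to force a $C^{2,4}$-minor, and here there are only two.

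These are exactly the cases that account for the $\mathcal{F}_0$ alternative in the statement. The paper's proof observes that subdividing a pair of parallel edges, or a diagonal edge, creates a subdivision of $K_{2,3}$, so the graph is no longer outerplanar; it then hands all non-outerplanar graphs to Proposition~\ref{prop:nonouterplanar}, which either finds a forbidden minor or places the graph in $\mathcal{F}_0$. So the correct structure is a trichotomy, not a dichotomy: a subdivided edge pattern either (a) is at most one red and one blue edge (giving $\mathcal{F}_1$), or (b) destroys outerplanarity (defer to Proposition~\ref{prop:nonouterplanar} and $\mathcal{F}_0$), or (c) yields one of the listed excluded minors (contradiction). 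You need to add branch (b) explicitly; without it, your case analysis attempts to refute configurations that are actually realizable under the hypotheses.
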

\begin{proof}
Let $G$ be as in the hypothesis. Above this statement, we argued that if
$G$ contains a subdivision of $G_1$, then $G$ is a subdivision
of a graph $G_1(r,d,b)$ for some non-negative integers $r$, $d$ and $b$.

\begin{center}
\begin{tikzpicture}[scale=0.8]

\begin{scope}[xshift=6cm, yshift=0cm, scale=0.9]
\node [vertex] (0) at (-1,0){};
\node [vertex] (1) at (1,0){};
\node [vertex] (2) at (-1,2){};
\node [vertex] (3) at (1,2){};
\node[] at (0,-0.7){\small{$G_1(r,d,b)$}};

\foreach \from/\to in {3/2, 0/1}
\draw [edge] (\from) to (\to);

\draw [edge, dashed] (0) to (3);

\path[-, red]          (0)  edge   [bend left]   (2);
\path[-, red]          (0)  edge   [bend right]   (2);
\draw [edge, dashed, red] (0) to (2);

\path[-, blue]          (1)  edge   [bend right]   (3);
\path[-, blue]          (1)  edge   [bend left]   (3);
\draw [edge, dashed, blue] (1) to (3);
\end{scope}
\end{tikzpicture}
\end{center}

To begin with, recall that $G_1(0,0,0) = G_1$ and $B(G_1)\cong U_{4,6}$. So, if 
$B(G)$ is an $ex_B(\mathcal{L})$-minor free matroid, then
$G$ is a subdivision of at most two edges of $G_1(r,d,b)$;
otherwise, $B(G)$ contains a $C^{2,4}$-minor (Lemma~\ref{lem:uniformextensions}).
We consider the
minimal outerplanar subdivision of two edges of the graphs
$G_1(r,d,b)$ that are not subdivisions of one red and one blue edge. 
To this end, notice that subdividing any two parallel edges or any diagonal
edge, creates a subdivision of $K_{2,3}$, i.e., a non-outerplanar subdivision. 
Thus, any non-outerplanar subdivision of at most two edges of $G_1$ 
belongs to $\mathcal{F}_1$. (This settles the case when $r = d = b = 0$).

 Consider the case when $r = b = 0$ and $d\ge 1$.
 In this case, one (not black) color class corresponds
to the edges on the top left side of the diagonal, and the other one
to the bottom right edges. Subdividing a diagonal edge, 
creates a supergraph of $K_{2,3}$, so this is taken care
of by Proposition~\ref{prop:nonouterplanar}; this also happens if we
subdivide a pair of parallel edges. Now we show that
subdividing two non parallel edges on the same side of the diagonal 
yields a graph whose bicircular matroid has an $ex_B(\mathcal{L})$-minor.
Up to symmetry, the following graph is the unique possible minimal case.
To see that its bicircular matroid has an $ex_B(\mathcal{L})$-minor,
notice that this graph contains $P_2$ as a minor: Remove
the edge $yz$, and contract the edges $zw$ and $zx$.
\begin{center}
\begin{tikzpicture}[scale=0.8]
\begin{scope}[xshift=-3cm, yshift=4cm, scale=0.9]
\node [vertex, label = left:{$x$}] (0) at (-1,0){};
\node [vertex, label = right:{$w$}] (1) at (1,0){};
\node [vertex, label = left:{$y$}] (2) at (-1,2){};
\node [vertex, label = right:{$z$}] (3) at (1,2){};
\node [vertex] (s1) at (0,0){};
\node [vertex] (s2) at (1.33,1){};

\draw [edge] (0) to (3);
\draw [edge, red] (2) to (3);

\foreach \from/\to in {0/s1, s1/1}
\draw [edge, blue] (\from) to (\to);

\path[-, red]          (0)  edge   [bend left]   (2);
\path[-, red]          (0)  edge   [bend right]   (2);
\path[-, blue]          (1)  edge   [bend right=10]   (s2);
\path[-, blue]          (s2)  edge   [bend right=10]   (3);
\path[-, blue]          (1)  edge   [bend left]   (3);
\end{scope}
\end{tikzpicture}
\end{center}

The second case we consider is when $r \ge 1$ and $b = 0$. 
Again, the subcases are when $d = 0$ and when $d\ge 1$.
Suppose  that $d = 0$ so,  the red class consists of the $r+2$ parallel edges,
and the remaining edges are blue. By the same arguments as before, we do not
consider subdivisions of a pair of parallel edges (this creates a supergraph
of $K_{2,3}$). The minimal subdivisions of two edges of the same
color class, but not of parallel edges of $G$ are the following ones.
\begin{center}
\begin{tikzpicture}[scale=0.8]

\begin{scope}[xshift=-3cm, yshift=0cm, scale=0.9]
\node [vertex, label = left:{$x$}] (0) at (-1,0){};
\node [vertex, label = right:{$w$}] (1) at (1,0){};
\node [vertex, label = left:{$y$}] (2) at (-1,2){};
\node [vertex, label = right:{$z$}] (3) at (1,2){};
\node [vertex] (s1) at (0,0){};
\node [vertex] (s2) at (1.33,1){};

\draw [edge, red] (2) to (0);

\foreach \from/\to in {0/s1, s1/1, 2/3}
\draw [edge, blue] (\from) to (\to);

\path[-, red]          (0)  edge   [bend left]   (2);
\path[-, red]          (0)  edge   [bend right]   (2);
\path[-, blue]          (1)  edge   [bend right=10]   (s2);
\path[-, blue]          (s2)  edge   [bend right=10]   (3);
\path[-, blue]          (1)  edge   [bend left]   (3);
\end{scope}

\begin{scope}[xshift=3cm, yshift=0cm, scale=0.9]
\node [vertex, label = left:{$x$}] (0) at (-1,0){};
\node [vertex, label = right:{$w$}] (1) at (1,0){};
\node [vertex, label = left:{$y$}] (2) at (-1,2){};
\node [vertex, label = right:{$z$}] (3) at (1,2){};
\node [vertex] (s1) at (0,0){};
\node [vertex] (s2) at (0,2){};

\draw [edge, red] (2) to (0);
\foreach \from/\to in {0/s1, s1/1, 3/s2, s2/2}
\draw [edge, blue ] (\from) to (\to);

\path[-, red]          (0)  edge   [bend left]   (2);
\path[-, red]          (0)  edge   [bend right]   (2);
\path[-, blue]          (1)  edge   [bend right]   (3);
\path[-, blue]          (1)  edge   [bend left]   (3);
\end{scope}
\end{tikzpicture}
\end{center}
By mapping the $yz$-edge (resp.\ the $zw$-path of length $2$) on the left
to the rightmost $zw$-edge (resp.\ the $yz$-path of lentgh $2$) on the right,
we see that the bicircular matroids of these graphs are isomorphic. 
We see that $R_4$ is a minor of the graph on the left after
contracting $zw$ and either of the $xy$-edges.
This implies that $R^4$ is a minor of $B(G)$.  Thus, if $G$
is a subdivision of $G_1(r,0,0)$, then $G$ is a subdivision of at most
one edge of each color class. To conclude this case, we consider
the subcase when $G$ is a subdivision of  $G_1(r,d,0)$ with $d\ge 1$.
In this case, if $G$ is a subdivision of two red or two blue
edges, then $G$ contains a subdivision of either two parallel
edges, or of one of the graphs above. Thus, we only consider the
case when $G$ is a subdivision of some black non-diagonal edge. 
The following is the unique such subdivision.
\begin{center}
\begin{tikzpicture}[scale=0.8]
\begin{scope}[scale=0.9]
\node [vertex, label = left:{$x$}] (0) at (-1,0){};
\node [vertex, label = right:{$w$}] (1) at (1,0){};
\node [vertex, label = left:{$y$}] (2) at (-1,2){};
\node [vertex, label = right:{$z$}] (3) at (1,2){};
\node [vertex] (s2) at (0,2){};

\draw [edge, red] (2) to (0);
\path[-, red]          (0)  edge   [bend left]   (2);
\path[-, red]          (0)  edge   [bend right]   (2);

\foreach \from/\to in {3/s2, s2/2, 0/3}
\draw [edge] (\from) to (\to);

\draw [edge, blue] (1) to (0);
\path[-, blue]          (1)  edge   [bend right]   (3);
\path[-, blue]          (1)  edge   [bend left]   (3);
\end{scope}
\end{tikzpicture}
\end{center}
In this case, by contracting one of the $xy$-edges, and removing
one  $zw$-edge, we obtain $R_4''$ as a minor of the graph
above. Therefore, if $G$ is an outerplanar subdivision of either
$G_1(r,d,0)$ for some $d\ge 1$, then $G\in \mathcal{F}_1$. By symmetry,
and together with the previous subcase (when $d = 0$), 
the claim holds when $G$ is an outerplanar subdivision  of $G_1(r,d,0)$
or of $G_1(0,d,b)$ for some non-negative integer $d$.

The final case is when $r$ and $b$ are positive integers. In this case,
the definitions of the red and blue edge do not depend on $d$;
each (not black) color class corresponds to a class of parallel edges.
By the same argument as above, we do not
consider the cases when $G$ is a subdivision of a pair of parallel edges,
nor when is a subdivision of some diagonal edges. Thus, it remains
to show that if $G$ is a subdivision of either the top or the bottom edge, then 
has an $ex_B(\mathcal{L})$-minor. Such a minimal subdivision, looks as
follows, and in this case we see that it contains $D_4$ (Figure~\ref{fig:bicircularEX})
as a minor by contracting the $xw$-edge.
\begin{center}
\begin{tikzpicture}[scale=0.8]
\begin{scope}[scale = 0.9]
\node [vertex, label = left:{$x$}] (0) at (-1,0){};
\node [vertex, label = right:{$w$}] (1) at (1,0){};
\node [vertex] (2) at (-1,2){};
\node [vertex] (3) at (1,2){};
\node [vertex] (s2) at (0,2){};

\foreach \from/\to in {0/1, 3/s2, s2/2}
\draw [edge] (\from) to (\to);

\path[-, red]          (0)  edge   [bend left]   (2);
\path[-, red]          (0)  edge   [bend right]   (2);
\draw [edge, red] (0) to (2);

\path[-, blue]          (1)  edge   [bend right]   (3);
\path[-, blue]          (1)  edge   [bend left]   (3);
\draw [edge, blue] (1) to (3);
\end{scope}

\end{tikzpicture}
\end{center}

After this exhaustive case checking, we conclude that
if $G$ is an outerplanar graph that contains a subdivision of $G_1$,
then $G$ is a subdivision of at most one red edge and at most one blue
edge of  $G_1(r,d,b)$ for some non-negative integers $r$, $d$ and $b$.
Therefore, $G\in\mathcal{F}_1$ (recall that if $G$ is not an outerplanar
graph, then  Proposition~\ref{prop:nonouterplanar} asserts that
$G\in\mathcal{F}_0$).
\end{proof}


\subsection*{\normalfont{\textsc{$2$-connected graphs}}}

The aim of this case is to settle Proposition~\ref{prop:F0-F4} for
$2$-connected graphs. Notice that if $G$ is a $2$-connected graph, then
$G$ is a subdivision of some $2$-connected graph $H$ of minimum
degree $3$ (if $G$ has minimum degree $3$ then $G = H$).
By Lemmas~\ref{lem:2connected} and~\ref{lem:G1subdivision},
if $H$ has a pair of non-adjacent vertices, then $H$ is a spanning
supergraph of $G_1$ so, $|V(H)| = 4$. On the contrary, if every pair of vertices of
$H$ are adjacent, then $H$ contains $K_4$ as a subgraph. Hence, by
Lemma~\ref{lem:K4subdivision}, we conclude that $H = K_4$. 
In both cases, we conclude that if $H$ has at least four vertices, then 
it has exactly four vertices. Thus, we conclude the following statement.

\begin{proposition}\label{prop:boundedsubdivision}
Let $G$ be a $2$-connected graph. If $B(G)$ is an $ex_B(\mathcal{L})$-minor
free matroid, then $G$ is a subdivision of a graph on at most $4$ vertices.
Moreover, if $G$ is a subdivision of a graph on $4$ vertices of degree at least
three, then $G\in \mathcal{F}_0\cup \mathcal{F}_1$.
\end{proposition}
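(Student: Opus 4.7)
The plan is to reduce $G$ to its topological core $H$ (obtained by suppressing every degree-two vertex until no such vertex remains) and then invoke Lemmas~\ref{lem:K4subdivision}, \ref{lem:2connected}, and~\ref{lem:G1subdivision}. Since $G$ is $2$-connected, so is $H$, and unless $G$ is a cycle---in which case $H$ is a digon and $|V(H)|=2$, trivially settling the first statement---every vertex of $H$ has degree at least three. Suppressing a degree-two vertex of $G$ corresponds to contracting one of its two incident edges, a matroid contraction in $B(G)$; hence $B(H)$ is a minor of $B(G)$ and inherits the $ex_B(\mathcal{L})$-minor free hypothesis.

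I would then split into two cases. If $H$ has a pair of non-adjacent vertices, Lemma~\ref{lem:2connected} applied to $H$ produces a subdivision of $G_1$ in $H$, which is inherited by $G$ since $G$ is itself a subdivision of $H$. Lemma~\ref{lem:G1subdivision} then yields $G\in\mathcal{F}_0\cup\mathcal{F}_1$, and by inspection of the generating graphs of these families---namely $G_1(r,d,b)$, $K_{2,3}$, $K'_{2,3}$, $K''_{2,3}$, $K^\ast_{2,3}$, and $K_4$ with at most two edges subdivided---each has topological core on at most four vertices, so $|V(H)|\le 4$.

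In the complementary case every pair of vertices of $H$ is adjacent. If $|V(H)|\le 3$ there is nothing further to show. Otherwise $H$ contains $K_4$ as a subgraph, hence $G$ contains a subdivision of $K_4$, and Lemma~\ref{lem:K4subdivision} forces $G$ to be itself a subdivision of at most two edges of $K_4$; in particular $G\in\mathcal{F}_0$ and $|V(H)|=4$. This completes the first statement.

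For the moreover clause, the hypothesis rules out cycles (whose cores have vertices of degree two), so $H$ has exactly four vertices of degree at least three and one of the two cases above applies directly: either $H$ has a non-adjacent pair and $G\in\mathcal{F}_0\cup\mathcal{F}_1$ by Lemma~\ref{lem:G1subdivision}, or $H$ is complete on four vertices and $G\in\mathcal{F}_0\subseteq\mathcal{F}_0\cup\mathcal{F}_1$ by Lemma~\ref{lem:K4subdivision}. The whole argument is essentially bookkeeping on top of the three preceding lemmas; the only mild obstacle is the verification that suppression of degree-two vertices preserves the $ex_B(\mathcal{L})$-minor free hypothesis and that the topological cores of graphs in $\mathcal{F}_0\cup\mathcal{F}_1$ all have at most four vertices.
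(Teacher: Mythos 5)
Your argument is correct and follows essentially the same route as the paper: reduce $G$ to its minimum-degree-three core $H$, then split on whether $H$ has a non-adjacent pair (Lemma~\ref{lem:2connected} plus Lemma~\ref{lem:G1subdivision}) or is complete (Lemma~\ref{lem:K4subdivision}). Your explicit handling of the cycle case and of why $B(H)$ inherits the $ex_B(\mathcal{L})$-minor-free hypothesis are welcome refinements of details the paper leaves implicit, but they do not change the structure of the proof.
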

\begin{proof}
By the arguments in the paragraph above, we conclude that
$G$ is a subdivision of a graph on at most $4$ vertices. Moreover, 
with the same arguments we also notice that if $G$ is a subdivision of
a graph on $4$ vertices of minimum degree $3$, then $G$ contains a
subdivision of $K_4$ or of $G_1$. Thus, the claim follows by 
Lemmas~\ref{lem:K4subdivision} and~\ref{lem:G1subdivision}.
\end{proof}

Now we  consider subdivisions of graphs on three vertices.
Recall that the graph $2K_3$ is obtained by duplicating every edge of the triangle,
and its bicircular matroid is the uniform matroid $U_{3,6}$. Notice that
adding a loop to $2K_3$ creates a copy of $A_3$ as a subgraph. 
Also, adding   one more parallel edge to each parallel class yields the graph
$3K_3$. The bicircular matroids of $3K_3$ and of $A_3$ belong to
$ex_B(\mathcal{L})$. Thus,  if $G$ contains a subdivision of $2K_3$, 
and $B(G)$ is a connected $ex_B(\mathcal{L})$-minor free matroid, 
then $G$ is a subdivision of $2K_3(r,b)$ for some non-negative integers
$b$ and $r$. We use these observations to prove the following lemma.

\begin{lemma}\label{lem:2K3subdivision}
Let $G$ be an outerplanar graph such that $B(G)$ is an $ex_B(\mathcal{L})$-minor free
connected matroid. If $G$ contains a subdivision of $2K_3$, then
$G\in \mathcal{F}_2$.
\end{lemma}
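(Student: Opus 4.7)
The plan is to mirror the case-by-case analysis in the proof of Lemma~\ref{lem:G1subdivision}. By the observations immediately preceding the statement, $G$ is a subdivision of $2K_3(r,b)$ for some non-negative integers $r$ and $b$, and the goal is to show that, possibly after relabeling the three parallel classes, at most one red edge and at most one blue edge of $2K_3(r,b)$ are subdivided.

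First I would rule out subdividing two edges in the same parallel class: the two subdivided paths of length at least two, together with a length-two path through the third vertex of $2K_3$, form three internally disjoint paths of length at least two between a common pair of vertices, giving a $K_{2,3}$-subdivision and contradicting the outerplanarity of $G$. Hence at most one edge is subdivided per parallel class, so at most three edges of $2K_3(r,b)$ are subdivided in total. If exactly three are subdivided (one per class), then deleting all but two edges from each of the red and blue classes of $G$ produces a subdivision of $2K_3$ whose bicircular matroid is a series extension of three distinct elements of $U_{3,6}\cong B(2K_3)$; by Lemma~\ref{lem:uniformextensions}, this minor has a $C^{2,4}$-minor, contradicting the hypothesis. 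So $G$ subdivides at most two edges in at most two distinct parallel classes.

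If every subdivided edge lies in the red or blue class of $2K_3(r,b)$, then $G\in\mathcal{F}_2$ directly. The remaining case is that the black (size-two) class contains at least one subdivided edge. When $r=0$ or $b=0$, at least two parallel classes of $2K_3(r,b)$ have size two and are indistinguishable as graph classes; relabeling the subdivided (originally black) class as the red or blue class of some $2K_3(r',b')$ places $G$ in $\mathcal{F}_2$. The main obstacle, which I expect to be the hardest part, is the case $r,b\ge 1$ with a subdivided black edge, where the black class is the unique size-two class and no relabeling helps. For each such sub-case (a black edge alone, or a black edge together with a red or blue edge), I would exhibit an explicit $ex_B(\mathcal{L})$-minor by contracting one edge in the red class (merging $u$ and $w$ and converting the remaining red parallels into loops) and similarly in the blue class, then deleting appropriate extra parallels so that the resulting minor graph has bicircular matroid isomorphic to one of $A^3$, $R^4$, or $D^4$ shown in Figure~\ref{fig:bicircularEX}.
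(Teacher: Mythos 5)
Your overall skeleton is sound and matches the paper's: reduce to $G$ being a subdivision of $2K_3(r,b)$, use outerplanarity to forbid two subdivided edges in one parallel class, and use Lemma~\ref{lem:uniformextensions} to forbid subdivisions touching all three classes. However, both of the genuinely hard sub-cases are handled incorrectly. First, the relabeling claim for $r=0$ or $b=0$ is false in exactly the sub-case that matters: take $r\ge 1$, $b=0$ and suppose the two subdivided edges lie one in each of the two size-two classes. By definition, the black class of any $2K_3(r',b')$ has exactly two edges, so the class of size $r+2\ge 3$ can never be recolored black; hence at least one of the two subdivided edges must remain black under any relabeling, and $G\notin\mathcal{F}_2$. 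This graph is not vacuously irrelevant --- it must be actively excluded. The paper does this by deleting the unsubdivided edge of one small class and contracting one edge of the large class, which exhibits $R_4''$ and hence an $R^4$-minor; your proof as written would wrongly certify this graph as a member of $\mathcal{F}_2$.

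Second, in the case $r,b\ge 1$ with a subdivided black edge, the proposed operations cannot produce the claimed minors. Contracting one edge of the red class and one edge of the blue class identifies all three branch vertices of the triangle, leaving a graph on two vertices (the merged vertex and the subdivision vertex). The bicircular matroid of a connected two-vertex graph has rank at most $2$, and no further deletions or contractions can raise the rank, so no minor isomorphic to $A^3$, $R^4$ or $D^4$ (ranks $3$, $4$, $4$) can be extracted afterwards. The correct move here is deletion only: removing the unsubdivided black edge and all but three edges from each of the red and blue classes leaves exactly the graph $D_4$ of Figure~\ref{fig:bicircularEX} as a subgraph, giving a $D^4$-minor. (Your treatment of the ``three classes subdivided'' case via Lemma~\ref{lem:uniformextensions} is correct and is in fact a slightly cleaner way to dispatch that configuration than the paper's, but it does not compensate for the two gaps above.)
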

\begin{proof}
Before this lemma, we argued that if $G$ contains a subdivision $H$ of
$2K_3$, then $G = H$ and $H = 2K_3(r,b)$ for some non-negative
integers $r$ and $b$. 
Recall that we distinguished some
colored edges in $2K_3(r,b)$, we proceed to show that
$G$ must be a subdivision of at most one red and at most one
blue edge of $2K_3(r,b)$.
As we have done before, we highlight that subdividing a pair
of parallel edges of $2K_3$ creates a subdivision of $K_{2,3}$. 
Since we are considering outerplanar graphs, we do not consider
such subdivisions.

First, consider the case when $r = b = 0$, i.e., when
$G$ is a subdivision of $2K_3$, so $B(G)$ is a series extension
of $U_{3,6}$. By Lemma~\ref{lem:uniformextensions}, we know that if $G$
contains a subdivision of at least three different edges of $2K_3$, then 
$B(G)$ has a $C^{2,4}$-minor. Thus, $G$ is a subdivision
of at most two non-parallel edges of $2K_3$, and so, $G\in\mathcal{F}_2$. 

Suppose now that $r\ge 1$ and $b = 0$. To show that 
$G$ is a subdivision of at most one red edge and at most one blue
edge, we consider the minimal subdivision of $2K_3(1,0)$ that
is not such a subdivision (nor a subdivision of two parallel edges).
\begin{center}
\begin{tikzpicture}[scale=0.8]

\begin{scope}[xshift=5cm, yshift=0cm, scale=0.9]
\node [vertex, label=left:{$x$}] (0) at (-1,0){};
\node [vertex, label=right:{$z$}] (1) at (1,0){};
\node [vertex, , label=left:{$y$}] (2) at (0,2){};
\node [vertex] (a) at (0.82,1.17){};
\node [vertex] (b) at (0,-0.32){};

\foreach \from/\to in {0/2}
\draw [edge, red] (\from) to (\to);

\path[-, red]          (0)  edge   [bend left]   (2);
\path[-, red]          (0)  edge   [bend right]   (2);
\path[-, blue]          (1)  edge   [bend right]   (2);
\path[-, blue]          (1)  edge   [bend left]   (2);
\path[-, blue]          (1)  edge   [bend right]   (0);
\path[-, blue]          (1)  edge   [bend left]   (0);
\end{scope}
\end{tikzpicture}
\end{center}
By removing the $xz$-edge and contracting one $xy$-edge,
we see that this graph contains $R_4''$ as a minor. Therefore, if $G$ is
an outerplanar subdivision of $2K_3(1,0)$, then it is a subdivision of at
most one red and at most one blue edge of $2K_3(r,0)$. By symmetric
arguments,  the claim also follows when $r = 0$ and $b\ge 1$.
So, we conclude the proof by considering the case when $r$ and 
$b$ are positive integers.
In this case, the red edges correspond to the $(r+2)$-parallel edges, and
the blue edges to the $(b+2)$-parallel edges. Since we are considering
outerplanar subdivisions, we show that the bicircular matroid of the 
following graph has an $ex_B(\mathcal{L})$-minor.
\begin{center}
\begin{tikzpicture}[scale=0.8]
\begin{scope}[scale=0.9]

\node [vertex, label = left:{$x$}] (0) at (-1,0){};
\node [vertex, label = right:{$z$}] (1) at (1,0){};
\node [vertex] (2) at (0,2){};
\node [vertex] (a) at (0,0){};

\foreach \from/\to in {1/a, a/0}
\draw [edge] (\from) to (\to);

\draw [edge, red] (0) to (2);
\path[-, red]          (0)  edge   [bend left]   (2);
\path[-, red]          (0)  edge   [bend right]   (2);
\draw [edge, blue] (1) to (2);
\path[-, blue]          (1)  edge   [bend right]   (2);
\path[-, blue]          (1)  edge   [bend left]   (2);
\path[-]          (1)  edge   [bend left]   (0);
\end{scope}
\end{tikzpicture}
\end{center}
In this case, by removing the $xz$-edge, we see that 
$D_4$ is a subgraph of the graph above. So, its  bicircular matroid
 has a $D^4$-minor. Hence, if $G$ is a subdivision
of $2K_3(r,b)$ for some positive integers $r$ and $b$, then 
$G$ is a subdivision of at most one blue and  at most one red edge. 
After considering all possible cases, we conclude that the statement
of this proposition holds.
\end{proof}

To conclude the case checking for outerplanar subdivisions of 
graphs on three vertices, we recall the definition of the 
graphs $K_3(r,j,l)$. These graphs are obtained from a triangle
by adding $j$ parallel copies to some edge; adding $k$ parallel copies to another;
and  $j$ loops incident in some vertex that is not incident
with both classes of parallel edges. We depict them as follows, together
with the edges colorings used to define the family $\mathcal{F}_3$.
\begin{center}
\begin{tikzpicture}[scale=0.8]

\begin{scope}[xshift=-7cm, yshift=0cm, scale=0.8]
\node [vertex] (0) at (-1,0){};
\node [vertex, label = 270:{$x$}] (1) at (1,0){};
\node [vertex] (2) at (0,2){};
\node[] at (-3,0){$K_3(r,j,l)$};

\draw [edge] (0) to (1);

\path[-, red]          (0)  edge   [bend left]   (2);
\path[-, red]          (0)  edge   [bend right]   (2);
\path[-, dashed]          (1)  edge   [bend right]   (2);
\path[-]          (1)  edge   [bend left]   (2);

\path[-, dashed]         (1) edge [min distance=1cm, in = 45, out = -45] (1);

\end{scope}

\begin{scope}[xshift=0cm, yshift=0cm, scale=0.8]
\node [vertex] (0) at (-1,0){};
\node [vertex, label = 270:{$x$}] (1) at (1,0){};
\node [vertex] (2) at (0,2){};
\node[] at (-3,0){$K_3(1,1,0)$};

\draw [edge, purple] (0) to (1);

\path[-, red]          (0)  edge   [bend left]   (2);
\path[-, red]          (0)  edge   [bend right]   (2);
\path[-, blue]          (1)  edge   [bend right]   (2);
\path[-, blue]          (1)  edge   [bend left]   (2);

\end{scope}

\begin{scope}[xshift=7cm, yshift=0cm, scale=0.8]
\node [vertex] (0) at (-1,0){};
\node [vertex, label = 270:{$x$}] (1) at (1,0){};
\node [vertex] (2) at (0,2){};
\node[] at (-3,0){$K_3(1,0,1)$};

\foreach \from/\to in {0/1, 1/2}
\draw [edge, blue] (\from) to (\to);

\path[-, red]          (0)  edge   [bend left]   (2);
\path[-, red]          (0)  edge   [bend right]   (2);
\path[-]         (1) edge [min distance=1cm, in = 45, out = -45] (1);

\end{scope}
\end{tikzpicture}
\end{center}
\begin{lemma}\label{lem:3vertexsubdivision}
Let $G$ be a $2$-connected outerplanar graph such that $B(G)$ is an
$ex_B(\mathcal{L})$-minor free matroid. If $G$ is a subdivision
of a graph on three vertices of minimum degree three, 
then $G\in\mathcal{F}_2\cup \mathcal{F}_3$.
\end{lemma}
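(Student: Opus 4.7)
The proof follows the template of Lemmas~\ref{lem:K4subdivision}, \ref{lem:G1subdivision}, and~\ref{lem:2K3subdivision}: identify which $3$-vertex multigraphs $H$ (of minimum degree three, with underlying simple graph $K_3$ since $G$ is $2$-connected) can serve as the topological skeleton of $G$, and verify for each such $H$ that its outerplanar subdivisions belong to $\mathcal{F}_2 \cup \mathcal{F}_3$. Write $m_{ij}$ for the edge multiplicities and $l_i$ for the loop counts of $H$; each vertex satisfies $2 l_i + \sum_{j\ne i} m_{ij} \geq 3$.

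If every $m_{ij} \geq 2$, then $G$ contains a subdivision of $2K_3$ and Lemma~\ref{lem:2K3subdivision} gives $G \in \mathcal{F}_2$. Otherwise, relabel so that $m_{23} = 1$ and analyze the distribution of loops in $H$. If $H$ has loops at all three vertices, then $W_3 \subseteq H$ and $B(G)$ has a $\mathcal{W}^3$-minor, contradicting the hypothesis. If $H$ has loops at exactly two vertices, I would split into subcases according to whether both loop-bearing vertices are endpoints of the single edge or only one of them is; in all subcases except one, $A_3' \subseteq H$ after a direct subgraph check against Figure~\ref{fig:bicircularEX}, while in the residual subcase, with loops at both endpoints of the single edge and only one pair of multi-edges, a direct $6$-element circuit-structure comparison establishes $B(H) \cong A^3$. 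If $H$ carries a single loop at the vertex opposite the single edge, the degree constraints at the other two vertices force $m_{12}, m_{13} \geq 2$ and $A_3 \subseteq H$. In each of these situations $B(G)$ contains a matroid from $ex_B(\mathcal{L})$ as a minor, a contradiction.

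The only remaining possibilities are that either $H$ has no loops, or the loops of $H$ are concentrated at a single vertex that is an endpoint of some single edge of $H$. In both cases, $H \cong K_3(r, j, l)$ for integers $r \geq 1$, $j \geq 0$, $l \geq 0$, by selecting the vertex $v$ in the definition of $K_3(r, j, l)$ to be a loop-bearing endpoint of a single edge when loops are present, and an arbitrary endpoint of a single edge otherwise.

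Finally, it remains to verify that an outerplanar subdivision $G$ of $K_3(r, j, l)$ with $r \geq 1$ fits item~1 of the definition of $\mathcal{F}_3$. Outerplanarity rules out subdividing any pair of parallel edges, since that creates a $K_{2, 3}$-subdivision. A minimal-case check in the spirit of the proof of Lemma~\ref{lem:2K3subdivision} then rules out subdividing two distinct edges of the same color class by exhibiting, after suitable contractions, a minor that equals $R_4''$, $A_3$, or another element of $ex_B(\mathcal{L})$. The main technical obstacle is the two-loop subcase with only one pair of multi-edges, where the subgraph-matching with $A_3'$ fails; this is overcome by the direct $6$-element matroid isomorphism mentioned above.
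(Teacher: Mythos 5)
Your classification of the skeleton $H$ (the graph obtained by contracting all subdivided edges) follows the paper's argument closely: the $2K_3$-subgraph case is handed to Lemma~\ref{lem:2K3subdivision}, three looped vertices give $W_3$, two looped vertices give $A_3'$, and a looped vertex incident with two parallel classes gives $A_3$, leaving $H\cong K_3(r,j,l)$ with $r\ge 1$. One remark: your ``residual subcase'' in the two-loop analysis does not actually arise. If the loops sit at both endpoints of the single edge, the third (loopless) vertex still has degree at least three and hence carries a parallel pair to one of the two looped vertices, so $A_3'$ is again a literal subgraph (the minimal instance of your residual configuration \emph{is} $A_3'$ up to isomorphism). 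Your fallback of verifying $B(H)\cong A^3$ directly is harmless but unnecessary.

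The genuine gap is in the last step, where you must show that an outerplanar subdivision $G$ of $K_3(r,j,l)$ satisfies item~1 of the definition of $\mathcal{F}_3$. Your one-sentence plan --- rule out ``subdividing two distinct edges of the same color class'' --- misstates what has to be proved and omits most of the work. Membership in $\mathcal{F}_3$ requires that \emph{at most one red and at most one blue} edge be subdivided; in particular no black edge may be subdivided at all, and for $l\ge 2$ almost every non-loop edge of $K_3(r,j,l)$ is black (see Figure~\ref{fig:H3}), so entire edge classes must be excluded from subdivision, not merely pairs within a class. The paper first bounds the total number of subdivided edges by two via Lemma~\ref{lem:uniformextensions} (using $B(K_3(1,1,0))\cong B(K_3(1,0,1))\cong U_{3,5}$), a step you do not mention, and then runs a case analysis on $l\le 1$ versus $l\ge 2$ that locates $D^4$-, $R^4$- and $R^3$-minors in the specific forbidden subdivisions. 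The minors you name for this stage ($R_4''$ and $A_3$, which are graphs rather than elements of $ex_B(\mathcal{L})$) are the ones used elsewhere in the paper and are not the ones that arise here. As written, the concluding verification is not carried out and the stated plan would not, even if executed, establish the required conclusion.
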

\begin{proof}
Let $H$ be the graph obtained from $G$ by contracting all
subdivided edges. By the hypothesis of this lemma, we assume that
$H$ has minimum degree three and $|V(H)| = 3$.
If $H$ contains  $2K_3$ as a subgraph, then $G$ contains
a subdivision of $2K_3$ and the claim follows by
Lemma~\ref{lem:2K3subdivision}. Now, we show that if $H$ does not
contain a copy of $2K_3$, then $H = K_3(r,j,l)$ for some
non-negative integers $r$, $j$ and $l$. 

Notice that since $H$ is $2$-connected (because is obtained from
contracting subdivided edges of a $2$-connected graph),
then $H$ is a supergraph of $K_3$. So, if every vertex of $H$ has
a loop, then $H$ contains $W_3$ as a subgraph, and so $B(G)$ 
has a $\mathcal{W}^3$-minor. Thus, $H$ has at least one  loopless
vertex $v$. Since $d_H(v) \ge 3$, then $v$ is incident with a pair
of parallel edges. Let $u$ be the other vertex incident in this same
pair of parallel edges, and let $w$ be the remaining vertex. If both, 
$u$ and $w$ are looped vertices, then $H$ contains $A_3'$ as a subgraph,
which implies that $B(G)$ has an $A^3$-minor. Thus, $H$ has at most
one looped vertex. Finally, if this vertex is also incident with
two classes of parallel edges, $H$ contains a copy of $A_3$,
which also implies that $B(G)$ has an $A^3$-minor. 
Since we are assuming that $B(G)$ is $ex_B(\mathcal{L})$-free, 
we conclude that $H$ has at most one looped vertex, and this
vertex is not incident with two classes of parallel edges. Therefore, 
$H = K_3(r,j,l)$ for some non-negative integers $r$, $j$ and $l$,
and so, $G$ is  a subdivision of $K_3(r,j,l)$. Furthermore, 
since $H$ has minimum degree three, then $r\ge 1$
and $j+l\ge 1$. 
 
Recall that the bicircular matroid of  $K_3(1,1,0)$ and of $K_3(1,0,1)$ is the 
uniform matroid $U_{3,5}$. Hence, by Lemma~\ref{lem:uniformextensions},
we know that any subdivision of at least three edges of $K_3(1,1,0)$ or of
$K_3(1,0,1)$ has a bicircular matroid with a $C^{2,4}$-minor.
By the edge symmetries of these graphs,  any subdivision of at least three
edges of $K_3(r,j,l)$ (where $r\ge 1$ and $j+l\ge 1$) has a bicircular
matroid with a $C^{2,4}$-minor.

Let $G$ be a subdivision of at most two non-loop edges%
\footnote{Any subdivision of some loop of $K_3(r,j,l)$ yields a graph with a cut vertex.}
of $K_3(r,j,l)$. We begin by considering all cases when
$l\le 1$ (see the top two rows of Figure~\ref{fig:H3}). In particular,
if $r \ge 1$ and $j+l = 1$, then any subdivision of any pair of non-parallel edges
of $K_3(r,j,l)$  belongs to $\mathcal{F}_3$ (this cases correspond to subdivisions
of $K_3(1,1,0)$, of $K_3(r,1,0)$, of $K_3(1,0,1)$ and of $K_3(r,0,1)$). Also,
if $r = 1 = l$, then any subdivision of a pair of non-parallel edges of $K_3(1,j,1)$
belongs to $\mathcal{F}_3$, so the claim follows for subdivisions
of $K_3(1,1,0)$, of $K_3(r,1,0)$, of $K_3(1,0,1)$, of $K_3(r,0,1)$ and of
$K_3(1,j,1)$. The remaining subcases (of the  case  $l\le 1$) are
subdivisions of $K_3(r,j,l)$ where $r,j\ge 2$ (right most graphs in
the top two columns of Figure~\ref{fig:H3}). In these cases,
we must show that $G$ is a subdivision of at most one edge in each parallel
classes. Since we are working with outerplanar graphs, we assume that
it is not a subdivision of edges in the same parallel class. Thus, it suffices
to show that $G$ does
not contain a subdivision of the bottom edge of $K_3(r,j,l)$. Again, we consider
the minimal such subdivisions, and see that their bicircular matroids
have an $ex_B(\mathcal{L})$-minor. We depict these cases below, 
and clearly, their bicircular matroid is $D^4$ (Figure~\ref{fig:bicircularEX})
which belongs to $ex_B(\mathcal{L})$.
\begin{center}
\begin{tikzpicture}[scale=0.8]
\begin{scope}[xshift = -3cm, scale=0.9]%
\node [vertex] (0) at (-1,0){};
\node [vertex] (1) at (1,0){};
\node [vertex] (2) at (0,2){};
\node [vertex] (s') at (0,0){};

\foreach \from/\to in {0/s', s'/1}
\draw [edge] (\from) to (\to);

\draw [edge, red] (0) to (2);

\foreach \from/\to in {0/2, 2/0}
\path[-, red]          (\from)  edge   [bend left]   (\to);

\foreach \from/\to in {1/2, 2/1}
\path[-, blue]          (\from)  edge   [bend left]   (\to);

\path[-]         (1) edge [min distance=1cm, in = 45, out = -45] (1);

\end{scope}

\begin{scope}[xshift = 3cm, scale=0.9]%
\node [vertex] (0) at (-1,0){};
\node [vertex] (1) at (1,0){};
\node [vertex] (2) at (0,2){};
\node [vertex] (s') at (0,0){};

\foreach \from/\to in {0/s', s'/1}
\draw [edge] (\from) to (\to);

\draw [edge, red] (0) to (2);
\draw [edge, blue] (1) to (2);

\foreach \from/\to in {0/2, 2/0}
\path[-, red]          (\from)  edge   [bend left]   (\to);

\foreach \from/\to in {1/2, 2/1}
\path[-, blue]          (\from)  edge   [bend left]   (\to);
\end{scope}
\end{tikzpicture}
\end{center}
 
In the paragraph above, we settled all cases when $l\le 1$,
now we assume that $l\ge 2$ (see the bottom row of
Figure~\ref{fig:H3}).  We first observe that $G$ is a subdivision of at most
one non-loop edge of $K_3(r,j,l)$. 
Since we are considering only outerplanar subdivisions, then we do not
consider subdivisions of parallel edges. Thus, it suffices to show that the
bicircular matroids of the following two graphs have an $ex_B(\mathcal{L})$-minor. 
\begin{center}
\begin{tikzpicture}[scale=0.8]
\begin{scope}[xshift = -3cm, scale=0.9]%
\node [vertex, label = left:{$x$}] (0) at (-1,0){};
\node [vertex] (1) at (1,0){};
\node [vertex, label = 0:{$y$}] (2) at (0,2){};
\node [vertex] (s) at (0.5,1){};
\node [vertex] (s') at (0,0){};

\foreach \from/\to in {0/s', s'/1, 1/s, s/2}
\draw [edge] (\from) to (\to);

\path[-]          (0)  edge   [bend left]   (2);
\path[-]          (0)  edge   [bend right]   (2);
\path[-]         (1) edge [min distance=1cm, in = 45, out = -45] (1);
\path[-]         (1) edge [min distance=1cm, in = -45, out = -135] (1);

\end{scope}

\begin{scope}[xshift = 3cm, scale=0.9]%
\node [vertex, label = left:{$x$}] (0) at (-1,0){};
\node [vertex] (1) at (1,0){};
\node [vertex, label = 0:{$y$}] (2) at (0,2){};
\node [vertex] (s) at (0.5,1){};
\node [vertex] (s') at (-0.8,1.2){};

\foreach \from/\to in {0/1, 1/s, s/2}
\draw [edge] (\from) to (\to);

\path[-]          (0)  edge   [bend left]   (2);
\path[-]          (0)  edge   [bend right]   (2);
\path[-]         (1) edge [min distance=1cm, in = 45, out = -45] (1);
\path[-]         (1) edge [min distance=1cm, in = -45, out = -135] (1);

\end{scope}
\end{tikzpicture}
\end{center}
Contracting one $xy$-edge in either of these graphs,
shows that their bicircular matroids  have an $R^4$-minor.
Since $B(G)$ is an $ex_B(\mathcal{L})$-minor free matroid,  then $G$ is a
subdivision of  at most one non-loop edge of $K_3(r,j,l)$ (recall that
we are assuming that $r\ge 1$ and $l\ge 2$). In particular, since
any subdivision of at most one edge of $K_3(1,0,l)$
belongs to  $\mathcal{F}_3$
(see left most graph in the bottom row of Figure~\ref{fig:H3}), then
the claim is settled for $l\ge 2$, $r = 1$ and $j = 0$. 

To conclude the proof we consider the case when $G$ is a subdivision of
$K_3(r,j,l)$, where $r,l\ge 2$. To show that $G\in \mathcal{F}_3$,
we argue that $G$ is a subdivision  of at most one red edge, i.e.,
an edge that is not incident with a loop.  Again,
we assume that $G$ is not a subdivision of loop, nor of a pair of parallel edges. 
Hence, it suffices to show that the bicircular matroid of the 
following graph has an $ex_B(\mathcal{L})$-minor. 
\begin{center}
\begin{tikzpicture}[scale=0.8]

\begin{scope}[scale=0.9]%
\node [vertex, label=left:{$x$}] (0) at (-1,0){};
\node [vertex] (1) at (1,0){};
\node [vertex, label = 0:{$y$}] (2) at (0,2){};
\node [vertex] (s) at (0.5,1){};

\foreach \from/\to in {0/1, 1/s, s/2}
\draw [edge] (\from) to (\to);

\draw [edge, red] (0) to (2);

\path[-, red]          (0)  edge   [bend left]   (2);
\path[-, red]          (0)  edge   [bend right]   (2);
\path[-]         (1) edge [min distance=1cm, in = 45, out = -45] (1);
\path[-]         (1) edge [min distance=1cm, in = -45, out = -135] (1);

\end{scope}
\end{tikzpicture}
\end{center}
Contract one of the $xy$-edges of the graph above. This minor
is a triangle with two classes of parallel loops, i.e., this minor equals
$R_3$. Thus, the bicircular matroid of the graph above has an $R^3$-minor. 
Which implies that if $G$ is an outerplanar subdivision of $K_3(r,j,l)$
where $r,l\ge 2$, then $G$ is a subdivision of at most 
one red edge. Therefore,  $G\in\mathcal{F}_3$. 
This completes all possible cases, so the claim follows.
\end{proof}

Having proved all these technical lemmas, we are now ready to show that 
Proposition~\ref{prop:F0-F4} holds for $2$-connected graphs.

\begin{proposition}\label{prop:2connected}
Let $G$ be a $2$-connected graph. If $B(G)$ is an
$ex_B(\mathcal{L})$-minor free connected matroid, then
$G$ belongs to
$\mathcal{F}_0\cup \mathcal{F}_1\cup \mathcal{F}_2 \cup \mathcal{F}_3$.
\end{proposition}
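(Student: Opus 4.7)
The plan is to combine the preceding results by a case analysis on the topological structure of $G$. First, if $G$ is non-outerplanar, Proposition~\ref{prop:nonouterplanar} immediately yields $G \in \mathcal{F}_0$, so I may assume throughout the rest of the argument that $G$ is outerplanar.

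Since $G$ is $2$-connected, I would suppress every degree-$2$ vertex of $G$ to obtain a graph $H$ such that $G$ is a subdivision of $H$. Either $H$ is a cycle, in which case $G$ itself is a cycle, or $H$ has minimum degree three. In the cycle case $G$ is loopless (being $2$-connected): if $G$ has two vertices then it fits item~4 of $\mathcal{F}_3$, and if $G$ has at least three vertices it is a chordless cycle with no loops, hence fits item~3 of $\mathcal{F}_3$ (with zero loops). Either way $G \in \mathcal{F}_3$.

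In the remaining case $H$ has minimum degree three, and Proposition~\ref{prop:boundedsubdivision} gives $|V(H)| \leq 4$. I would split into three subcases. When $|V(H)|=4$, the ``moreover'' part of Proposition~\ref{prop:boundedsubdivision} immediately gives $G \in \mathcal{F}_0 \cup \mathcal{F}_1$. When $|V(H)|=3$, Lemma~\ref{lem:3vertexsubdivision} delivers $G \in \mathcal{F}_2 \cup \mathcal{F}_3$. When $|V(H)|=2$, since $H$ is loopless and has minimum degree three, it must consist of at least three parallel edges; but any proper subdivision of three or more parallel edges contains a subdivision of $K_{2,3}$ and is therefore non-outerplanar, forcing $G=H$. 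Hence $G$ is a connected graph on two vertices, which places $G$ in $\mathcal{F}_3$ by item~4.

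I do not expect a serious obstacle, since all the heavy lifting is contained in the preceding lemmas and Proposition~\ref{prop:boundedsubdivision}. The only subtlety is verifying that the $|V(H)|=2$ branch cannot escape the outerplanar hypothesis, which is handled by the observation that any subdivision of $\geq 3$ parallel edges contains a $K_{2,3}$-subdivision, together with the initial reduction to the outerplanar case.
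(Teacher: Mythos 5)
Your overall architecture matches the paper's: dispose of the non-outerplanar case via Proposition~\ref{prop:nonouterplanar}, reduce to a subdivision of a small graph $H$ via Proposition~\ref{prop:boundedsubdivision}, and invoke the ``moreover'' clause for $|V(H)|=4$ and Lemma~\ref{lem:3vertexsubdivision} for $|V(H)|=3$. The gap is in the small cases, and it is genuine. First, your claim that a $2$-connected $G$ is loopless conflicts with the convention the paper is using: throughout (e.g.\ in the definition of $\mathcal{F}_3$, items 2 and 3, and in the paper's own treatment of this proposition) loops do not create cut vertices, so cycles with loops at one vertex and two-vertex graphs with loops at both vertices are legitimately $2$-connected and must be dealt with here. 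Your case split on $|V(H)|\in\{2,3,4\}$ silently drops $|V(H)|=1$ (a cycle with unsubdivided loops at the original vertex), and your $|V(H)|=2$ branch never sees the looped configurations at all.

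Second, even in the loopless two-vertex case your key step fails: a proper subdivision of a graph consisting of $m\ge 3$ parallel edges is \emph{not} automatically non-outerplanar. A subdivision of $K_{2,3}$ arises only when at least three of the parallel edges are subdivided; subdividing one or two of them yields an outerplanar graph strictly larger than $H$, so ``forcing $G=H$'' is a non sequitur and item~4 of $\mathcal{F}_3$ does not apply to such a $G$. The correct conclusion is that at most two parallel edges are subdivided, after which one lands in item~2 of $\mathcal{F}_3$ (a cycle with a chord and parallel copies of it) --- but only if each of the two branch vertices carries at most one loop. Ruling out two or more loops at a vertex in the presence of a subdivided edge is exactly where the paper has to use the excluded minors $R^3$ and $R^4$ (and, for both vertices doubly looped, $R_3$ itself); your argument never invokes any excluded minor in this branch, so this part of the claim is simply not established.
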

\begin{proof}
When $G$ is not an outerplanar graph, the claim follows by
Proposition~\ref{prop:nonouterplanar}. Also,
Proposition~\ref{prop:boundedsubdivision} asserts that if a $2$-connected
graph $G$ satisfies that $B(G)$ is an $ex_B(\mathcal{L})$-minor
free matroid, then $G$ is a subdivision of a graph of at most four
vertices. Moreover, the same proposition settles the case when $G$ is a subdivision
of a graph on four vertices of minimum degree three.
Similarly, Lemma~\ref{lem:3vertexsubdivision} shows that if $G$ is a subdivision of
a graph on $3$ vertices  of minimum degree three, then
$G\in \mathcal{F}_2\cup \mathcal{F}_3$.  To conclude this case checking we
consider the case when $G$ is an outerplanar subdivision of a graph on at
most two vertices.  This case follows almost trivially.

A graph $H$ on two vertices $x$ and $y$ consists of a class of
parallel non-loop edges, some loops incident with $x$ or $y$.
Subdividing any loop yields a graph with a cut vertex. Also, subdividing
at least three different parallel edges creates a subdivision of
$K_{2,3}$. Thus,  an outerplanar $2$-connected subdivision of a graph
on $2$ vertices is a subdivision of at most two non parallel edges of $H$. 
If each vertex of $H$ has at most one loop, then any such subdivision
belongs to $\mathcal{F}_3$. The remaining cases are when exactly one
vertex has at least two loops, and when both vertices have two loops. 
In the latter, we must verify that there are no subdivided edges, and
in the former we must verify that there is at most one subdivided edge. 
This follows because the bicircular matroids of the following graphs
are either $R^3$ or $R^4$ (see Figure~\ref{fig:bicircularEX}),
which both belong to $ex_B(\mathcal{L})$.
\begin{center}
\begin{tikzpicture}[scale=0.7]
\begin{scope}[xshift = -6cm]
\node [vertex, label =left:{$x$}] (n) at (-1,0){};
\node [vertex, label = right:{$y$}] (m) at (1,0){};
\node [vertex] at (0,0.325){};
\node [vertex] at (0,-0.325){};

\path[-]          (n)  edge   [bend right]   (m);
\path[-]          (n)  edge   [bend left]   (m);

\path[-]	     (n) edge [min distance=1cm]  (n);
\path[-]	     (m) edge [min distance=1cm]  (m);
\path[-]	     (m) edge [min distance=1cm, out = 225, in = 315]  (m);

\end{scope}

\begin{scope}[xshift = 0cm]
\node [vertex, label =left:{$x$}] (n) at (-1,0){};
\node [vertex, label = right:{$y$}] (m) at (1,0){};
\node [vertex] at (0,0.325){};
\node [vertex] at (0,-0.325){};

\path[-]          (n)  edge   [bend right]   (m);
\path[-]          (n)  edge   [bend left]   (m);
\draw[edge]    (n)   to  (m);

\path[-]	     (m) edge [min distance=1cm]  (m);
\path[-]	     (m) edge [min distance=1cm, out = 225, in = 315]  (m);

\end{scope}
\begin{scope}[xshift = 6cm]
\node [vertex, label =left:{$x$}] (n) at (-1,0){};
\node [vertex, label = right:{$y$}] (m) at (1,0){};
\node [vertex] at (0,0.325){};

\path[-]          (n)  edge   [bend right]   (m);
\path[-]          (n)  edge   [bend left]   (m);

\path[-]	     (n) edge [min distance=1cm]  (n);
\path[-]	     (m) edge [min distance=1cm]  (m);
\path[-]	     (m) edge [min distance=1cm, out = 225, in = 315]  (m);
\path[-]	     (n) edge [min distance=1cm, out = 225, in = 315]  (n);
\end{scope}

\end{tikzpicture}
\end{center}

The final case is when $G$ is a $2$-connected subdivision of 
a graph on one vertex. In this case, $G$ is a cycle with at most
one looped vertex, and such a graph belongs to $\mathcal{F}_3$.
The claim now follows.
\end{proof}


\subsection*{\normalfont{\textsc{Outerplanar graphs with cut vertices}}}

Propositions~\ref{prop:nonouterplanar} and~\ref{prop:2connected}
settle Proposition~\ref{prop:F0-F4} for non-outerplanar graphs and for
$2$-connected graphs. We conclude by considering the
complement case.  In particular, we consider graphs that do not contain
subdivisions of $K_4$ nor of $K_{2,3}$. Also, in the 
paragraph before Lemma~\ref{lem:2K3subdivision}, we argued
that if $G$ contains a subdivision $H$ of $2K_3$ and $B(G)$ is 
$ex_B(\mathcal{L})$-minor free, then $G = H$. Which implies that
 $G$ is a $2$-connected graph. With analogous arguments, we conclude
 that if $G$ contains a subdivision of $G_1$ then $G$ is a $2$-connected graph.
 These observations, together with Lemma~\ref{lem:2connected}
 imply the following statement. 

\begin{observation}\label{obs:finalcase}
Let $G$ be an outerplanar graph such that $B(G)$ is an $ex_B(\mathcal{L})$-minor
free connected matroid. If $G$ has a cut vertex and $EB$ is an end block of
$G$, then one of the following holds:
	\begin{enumerate}
		\item $EB$ is a cycle with possible loops on the corresponding cut vertex, 
		\item $EB$ is a subdivision of a graph on $2$ vertices of minimum
		degree $3$, or
		\item $EB$ is a subdivision of a graph on three vertices
		but contains no subdivision of $2K_3$.
	\end{enumerate}
\end{observation}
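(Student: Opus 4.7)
The plan is to transfer the structural results for $2$-connected outerplanar graphs directly to the end block $EB$. First, I would record the exclusions available to $G$: outerplanarity forbids subdivisions of $K_4$ and $K_{2,3}$, while the discussion preceding this observation --- via Lemma~\ref{lem:2connected} and the paragraph preceding Lemma~\ref{lem:2K3subdivision} --- shows that $G$ contains no subdivision of $G_1$ nor of $2K_3$, since either would force $G$ to be $2$-connected, contradicting the existence of a cut vertex. Each of these exclusions is inherited by $EB \subseteq G$, and $B(EB)$, being a deletion-minor of $B(G)$, is itself $ex_B(\mathcal{L})$-minor free.

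Next, I would apply Proposition~\ref{prop:boundedsubdivision} to $EB$. Being a block of $G$, $EB$ is $2$-connected, so $EB$ is a subdivision of some graph $H$ on at most four vertices, which I take to be fully smoothed so that every vertex of $H$ has degree at least three. If $|V(H)| = 4$, then the ``moreover'' clause of Proposition~\ref{prop:boundedsubdivision} would place $EB$ in $\mathcal{F}_0 \cup \mathcal{F}_1$; but every graph in $\mathcal{F}_0$ contains a subdivision of $K_4$ or of $K_{2,3}$, and every graph in $\mathcal{F}_1$ contains a subdivision of $G_1$, each forbidden by step one. Hence $|V(H)| \le 3$.

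A short case split on $|V(H)| \in \{1,2,3\}$ then yields the three cases of the observation. If $|V(H)| = 1$, then $H$ is a single vertex with loops; the $2$-connectedness of $EB$ forces it to consist of one cycle together with possibly additional loops attached to the unique vertex shared by that cycle and the surviving loops, and an easy block-tree argument shows that this common vertex must be the cut vertex of $G$ (otherwise $EB$ would lose its leaf status in the block tree), recovering case~(1). If $|V(H)| = 2$, then the smoothing guarantees two vertices of degree at least three, which is case~(2). If $|V(H)| = 3$, then $EB$ is a subdivision of a three-vertex graph and, since it cannot contain a $2K_3$-subdivision by step one, case~(3) holds. The principal subtlety will be case~(1), where I must justify that the ``extra'' loops sit precisely at the cut vertex; this relies on the block decomposition of $G$ rather than any matroid computation.
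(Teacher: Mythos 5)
Your argument is correct and follows essentially the same route as the paper: both establish that outerplanarity excludes $K_4$- and $K_{2,3}$-subdivisions, that a $G_1$- or $2K_3$-subdivision would force $G$ to be $2$-connected (contradicting the cut vertex), and then use these exclusions to cap the number of branch vertices of the smoothed end block at three. The only cosmetic difference is that you route the bound through Proposition~\ref{prop:boundedsubdivision} and the definitions of $\mathcal{F}_0$ and $\mathcal{F}_1$, whereas the paper invokes Lemma~\ref{lem:2connected} directly; since that proposition is itself derived from Lemma~\ref{lem:2connected}, the underlying machinery is identical.
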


Recall that, by Proposition~\ref{prop:blockpath},
if the bicircular matroid of graph $G$ is $C^{2,4}$-minor free, 
then the block tree of $G$ is a path, and every middle block contains
exactly two vertices. By definition of $\mathcal{F}_4$, such a graph
$G$ belongs to $\mathcal{F}_4$ if the end blocks 
of $G$ are subgraphs of a subdivision of some red edge of the following graphs,
where the cut vertex is corresponding cut vertex is $x$.
\begin{center}
\begin{tikzpicture}[scale=0.8]

\begin{scope}[xshift=-7.5cm, yshift=0cm, scale=0.8]
\node [vertex] (0) at (-1,0){};
\node [vertex, label = 270:{$x$}] (1) at (1,0){};
\node [vertex] (2) at (0,2){};
\node[] at (0,-1){\small{$K_3(r,j,l)$}};

\draw [edge] (0) to (1);
\draw [edge, red, dashed] (0) to (2);
\draw [edge, dashed] (2) to (1);

\path[-, red]          (0)  edge   [bend left]   (2);
\path[-, red]          (0)  edge   [bend right]   (2);
\path[-]          (1)  edge   [bend right]   (2);
\path[-]          (1)  edge   [bend left]   (2);

\path[-, dashed]	     (1) edge [min distance=1cm, out = 45, in = 315]  (1);

\end{scope}

\begin{scope}[xshift=-2.5cm, yshift=0cm, scale=0.8]
\node [vertex] (0) at (-1,0){};
\node [vertex, label = 270:{$x$}] (1) at (1,0){};
\node [vertex] (2) at (0,2){};
\node[] at (0,-1){\small{$K_3(1,1,l)$}};

\draw [edge, red] (0) to (1);

\path[-, red]          (0)  edge   [bend left]   (2);
\path[-, red]          (0)  edge   [bend right]   (2);
\path[-]          (1)  edge   [bend right]   (2);
\path[-]          (1)  edge   [bend left]   (2);

\path[-, dashed]	     (1) edge [min distance=1cm, out = 45, in = 315]  (1);

\end{scope}

\begin{scope}[xshift=2.5cm, yshift=0cm, scale=0.8]
\node [vertex] (0) at (-1,0){};
\node [vertex, label = 270:{$x$}] (1) at (1,0){};
\node [vertex] (2) at (0,2){};
\node[] at (0,-1){\small{$K_3(1,0,l)$}};

\foreach \from/\to in {0/1, 1/2}
\draw [edge, red] (\from) to (\to);
\path[-, dashed]	     (1) edge [min distance=1cm, out = 45, in = 315]  (1);

\path[-, red]          (0)  edge   [bend left]   (2);
\path[-, red]          (0)  edge   [bend right]   (2);

\end{scope}

\begin{scope}[xshift=7.5cm, yshift=0cm, scale=0.8]
\node [vertex] (0) at (-1,0){};
\node [vertex, label = 0:{$x$}] (2) at (1,0){};
\node[] at (0,-1){\small{$mK_2''$}};

\draw [edge, red, dashed] (0) to (2);
\path[-, red]          (0)  edge   [bend left]   (2);
\path[-, red]          (0)  edge   [bend right]   (2);
\path[-]	     (0) edge [min distance=1cm]  (0);
\path[-, dashed]	     (2) edge [min distance=1cm]  (2);
\end{scope}
\end{tikzpicture}
\end{center}

\begin{proposition}\label{prop:excludedcase}
Let $G$ be an outerplanar graph with some cut vertex. 
If $B(G)$ is an $ex_B(\mathcal{L})$-minor free connected
matroid, then $G\in\mathcal{F}_4$.
\end{proposition}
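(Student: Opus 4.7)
The plan is to combine the structural restrictions already established in this section with a case analysis on end blocks. By Proposition~\ref{prop:blockpath}, the hypothesis that $B(G)$ is $ex_B(\mathcal{L})$-minor free (and in particular $C^{2,4}$-minor free) forces the block tree of $G$ to be a path and every middle block to have exactly two vertices, with an arbitrary number of parallel edges between them. This already matches items 1 and 2 in the definition of $\mathcal{F}_4'$, so it only remains to identify the possible structures of the two end blocks $EB_1, EB_2$ of $G$ (and the loops that may be attached at the cut vertices). By Observation~\ref{obs:finalcase}, each end block falls into one of three families, and my goal is to refine each case so that $EB_i$ sits as a subgraph of a subdivision of at most one red edge of one of the templates $K_3(r,j,l)$, $K_3(1,1,l)$, $K_3(1,0,l)$, or $mK_2''$, with the cut vertex playing the role of $x$.

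I would handle the three cases of Observation~\ref{obs:finalcase} in turn. Case (a), where $EB$ is a cycle with loops only on the cut vertex $x$, is already a subgraph of a subdivision of one red edge of $K_3(1,0,l)$, so no further work is needed. For case (b), where $EB$ is a subdivision of a graph on two vertices of minimum degree three, I would first argue, using the fact that subdividing three different parallel edges produces a subdivision of $K_{2,3}$, that only one or two non-parallel edges can be subdivided, and then use the bicircular matroid of a neighbouring middle block (which contains at least two parallel edges) to show that if the non-cut vertex carries two or more loops, or if the (red) parallel class is subdivided more than once, one recovers either $R^3$ or $R^4$ as a minor; thus $EB$ must be a subgraph of a subdivision of at most one red edge of $mK_2''$. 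For case (c), the proof of Lemma~\ref{lem:3vertexsubdivision} already forces the uncontracted model of $EB$ to be $K_3(r,j,l)$ for some non-negative $r,j,l$, and the same internal case analysis shows that only subdivisions of at most one red edge are allowed; what is new here is to verify, by combining $EB$ with a pair of parallel edges from the adjacent middle block, that the cut vertex of $G$ must be the distinguished loop-vertex of $K_3(r,j,l)$, and that no additional loops can sit at any other vertex of $EB$.

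The extra ingredient driving the cases (b) and (c) arguments is that, because $G$ has a cut vertex and $B(G)$ is connected, every neighbour of $EB$ across the cut vertex contributes at least one cycle or one pair of parallel edges; contracting this block down to two parallel edges (or a single loop) at $x$ and running the exact same affine/contraction checks used in Lemmas~\ref{lem:K4subdivision}, \ref{lem:G1subdivision}, \ref{lem:2K3subdivision} and~\ref{lem:3vertexsubdivision} produces the desired $ex_B(\mathcal{L})$-minor whenever $EB$ deviates from the $\mathcal{F}_4$-pattern. In particular, a loop at a non-cut vertex forces a $D^4$-, $A^3$-, $R^3$- or $R^4$-minor once the neighbouring block is taken into account, and a subdivision of a ``blue'' or ``black'' edge of $K_3(r,j,l)$ forces a $D^4$- or $R^4$-minor by the same contractions used in Lemma~\ref{lem:3vertexsubdivision}.

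The main obstacle is bookkeeping rather than conceptual: the number of configurations that must be ruled out roughly doubles compared to the $2$-connected case, because every loop/subdivision pattern in $EB$ must be checked against every minimal possibility for the adjacent block (a pair of parallel edges from a middle block, or a minimal cycle/loop from the other end block). The payoff is that each such configuration contracts down to one of the seven minimal forbidden minors already exhibited in Figures~\ref{fig:C24}, \ref{fig:bicircularEX} and~\ref{fig:BB}, so once the right contractions are chosen, every excluded case reduces verbatim to a scenario already handled in Lemma~\ref{lem:3vertexsubdivision} or Lemma~\ref{lem:G1subdivision}. Piecing these verifications together over both end blocks yields $G\in\mathcal{F}_4$.
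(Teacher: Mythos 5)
Your proposal is correct and follows essentially the same route as the paper: reduce via Proposition~\ref{prop:blockpath} and Observation~\ref{obs:finalcase} to a case analysis on the end blocks, contract the adjacent block down to a pair of parallel edges or a loop at the cut vertex, and show each deviation from the $\mathcal{F}_4$-templates yields an $ex_B(\mathcal{L})$-minor. The only discrepancies are in which specific excluded minors arise in a few subcases (e.g., the paper rules out a second subdivision class in the two-vertex end block via $P_2$/$P_3$, giving $C^{2,4}$, rather than $R^3$ or $R^4$), but these are bookkeeping details that do not affect the validity of the plan.
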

\begin{proof}
By the arguments in this subsection, it suffices to show
the end blocks of $G$ are a subgraph of some subdivision of at most
one red edge of either of $K_3(r,j,l)$, $K_3(1,1,l)$, $K_3(1,0,l)$ or $mK_2''$
depicted above. We divide this proof in
the cases laid out by  Observation~\ref{obs:finalcase}. In particular, when both
end blocks satisfy the first statement of Observation~\ref{obs:finalcase}, then
the claim follows immediately. Also, since $B(G)$ is a connected matroid, no end
block of $G$ consists of exactly one edge. This implies that each end block
can be contracted either to a pair of parallel edges, or to a leaf with
a loop on the end vertex. We use this simple observation throughout this proof.
Also, with out loss of generality we suppose that there are no loops
incident with the cut vertex $x$
(any minor of $G$ that does not contain the loops on $x$, is also a minor of $G$).

Suppose that an end block $EB$ is a subdivision of a graph
on two vertices of minimum degree three. Denote by $x$ the
corresponding cut vertex and by $y$ the other vertex in $EB$ of 
degree at least $3$.  If $EB$ is an empty subdivision, i.e., $EB$ is a
block on two vertices, then there is nothing to prove. Now we show that
if $EB$ contains at least one subdivided edge, then $y$ has at most one loop
and $EB$ has at most one subdivision class; in other words, 
$EB$ is a subdivision of at most one red edge of some $mK_2''$. 
First suppose that there are at least two loops incident with $y$. 
Since $x$ is a cut vertex and $B(G)$ is a connected matroid,
then $G$ must contain one  of the graphs as a minor.

\begin{center}

\begin{tikzpicture}[scale=0.8]

\begin{scope}[xshift=-3cm]%
\node [vertex, label = 180:{$y$}] (n) at (-2,0){};
\node [vertex, label=90:{$x$}] (m) at (0,0){};
\node [vertex] (m') at (1,0){};
\node [vertex] at (-1,0.325){};

\foreach \from/\to in {n/m, m/n, m/m', m'/m}
\path[-]          (\from)  edge   [bend right]   (\to);

\path[-]	     (n) edge [min distance=1cm]  (n);
\path[-]	     (n) edge [min distance=1cm, out = 225, in = 315]  (n);
\end{scope}
\begin{scope}[xshift=3cm]
\node [vertex, label = 180:{$y$}] (n) at (-2,0){};
\node [vertex, label=90:{$x$}] (m) at (0,0){};
\node [vertex] (m') at (1,0){};
\node [vertex] at (-1,0.325){};

\path[-]          (n)  edge   [bend right]   (m);
\path[-]          (n)  edge   [bend left]   (m);

\draw[edge] (m) to (m');

\path[-]	     (n) edge [min distance=1cm]  (n);
\path[-]	     (m') edge [min distance=1cm]  (m');

\path[-]	     (n) edge [min distance=1cm, out = 225, in = 315]  (n);

\end{scope}
\end{tikzpicture}
\end{center}
%
Since the graph to the left is $R_4$ and the graph to the right is $R_4'$, 
we conclude that there is at most one loop incident with $y$. 
Finally, we show that there is at most one subdivision class. To this end, 
notice that by contracting the edge $xy$ in the following graphs,
we obtain $P_2$ (on the left) and $P_3$ (on the right)  as a minor of $G$. 
\begin{center}

\begin{tikzpicture}[scale=0.8]

\begin{scope}[xshift=-3cm]%
\node [vertex, label = 180:{$y$}] (n) at (-2,0){};
\node [vertex, label=90:{$x$}] (m) at (0,0){};
\node [vertex] (m') at (1,0){};
\node [vertex] at (-1,0.325){};
\node [vertex] at (-1,-0.325){};

\foreach \from/\to in {n/m, m/n, m/m', m'/m}
\path[-]          (\from)  edge   [bend right]   (\to);

\draw[edge]    (n)   to  (m);
\end{scope}
\begin{scope}[xshift=3cm]
\node [vertex, label = 180:{$y$}] (n) at (-2,0){};
\node [vertex, label=90:{$x$}] (m) at (0,0){};
\node [vertex] at (-1,0.325){};
\node [vertex] at (-1,-0.325){};
\node [vertex] (m') at (1,0){};

\path[-]          (n)  edge   [bend right]   (m);
\path[-]          (n)  edge   [bend left]   (m);
\draw[edge]    (n)   to  (m);

\draw[edge] (m) to (m');
\path[-]	     (m') edge [min distance=1cm]  (m');

\end{scope}
\end{tikzpicture}
\end{center}
In both cases, this contradicts the fact that $B(G)$ is $ex_B(\mathcal{L})$-minor
free. So, if and end block $EB$ of $G$ satisfies the second statement
of Observation~\ref{obs:finalcase}, then $EB$ is a subgraph of a subdivision
of at most one red edge of a graph $mK_2''$.

To conclude this proof we verify the case when $EB$ is a subdivision 
of a graph on three vertices of minimum degree $3$. Denote by $x$, $y$ and
$z$ the vertices of degree at least $3$ of $EB$, with $x$ the cut vertex. 
Since $EB$ is a block, then $xy,~yz,~xz\in E(G)$. 
If  $y$ or $z$ are incident with some loop, then by contracting $xy$
we find $R_3$ as a minor of $G$. Thus, $y$ and $z$ are loopless vertices.
Moreover, since we are assuming that $EB$ contains no subdivision of
$2K_3$, then without loss of generality, we suppose that $yz$ has no
parallel edges. These arguments show that $EB$ is a subdivision
of $K_3(r,j,0)$ for some non-negative integer $j$ and $k$, with $j\ge 1$.
Now we show that $EB$ is a subdivision of at most one red edge. 
Recall that subdividing a pair of parallel edges in $K_3(r,j,0)$,
creates a subdivision of $K_{2,3}$. We do not consider such subdivisions
because we are working with outerplanar graphs. To see that $EB$ has at most
one subdivision class, we argue that the bicircular matroids
of the following graphs have an $ex_B(\mathcal{L})$-minor. 
\begin{center}
\begin{tikzpicture}[scale=0.8]
\begin{scope}[xshift=-7.5cm, yshift=0cm, scale=0.8]
\node [vertex, label = 270:{$y$}] (0) at (-1,0){};
\node [vertex, label = 270:{$x$}] (1) at (1,0){};
\node [vertex, label = 0:{$z$}] (2) at (0,2){};
\node[vertex] at (0,0){};
\node [vertex] (s) at (0.5,1){};
\node [vertex] (m) at (2,0){};

\foreach \from/\to in {0/1, 1/2}
\draw [edge] (\from) to (\to);

\foreach \from/\to in {0/2, 2/0, m/1, 1/m}
\path[-]      (\from)  edge   [bend right]   (\to);
\end{scope}
\begin{scope}[xshift=-2.5cm, yshift=0cm, scale=0.8]
\node [vertex, label = 270:{$y$}] (0) at (-1,0){};
\node [vertex, label = 270:{$x$}] (1) at (1,0){};
\node [vertex, label = 0:{$z$}] (2) at (0,2){};
\node[vertex] at (0,0){};
\node [vertex]  at (-0.8,1.2){};
\node [vertex] (m) at (2,0){};

\foreach \from/\to in {0/1, 1/2}
\draw [edge] (\from) to (\to);

\foreach \from/\to in {0/2, 2/0, m/1, 1/m}
\path[-]      (\from)  edge   [bend right]   (\to);

\end{scope}
\begin{scope}[xshift=2.5cm, yshift=0cm, scale=0.8]
\node [vertex, label = 270:{$y$}] (0) at (-1,0){};
\node [vertex, label = 270:{$x$}] (1) at (1,0){};
\node [vertex, label = 0:{$z$}] (2) at (0,2){};
\node [vertex] (m) at (2,0){};
\node[vertex] at (0,0){};
\node [vertex] (s) at (0.5,1){};
\path[-]	     (m) edge [min distance=1cm]  (m);

\foreach \from/\to in {0/1, 1/2, m/1}
\draw [edge] (\from) to (\to);

\path[-]      (0)  edge   [bend left]   (2);
\path[-]        (0)  edge   [bend right]   (2);

\end{scope}
\begin{scope}[xshift=7.5cm, yshift=0cm, scale=0.8]
\node [vertex, label = 270:{$y$}] (0) at (-1,0){};
\node [vertex, label = 270:{$x$}] (1) at (1,0){};
\node [vertex, label = 0:{$z$}] (2) at (0,2){};
\node [vertex] (m) at (2,0){};
\node[vertex] at (0,0){};
\node [vertex] (s') at (-0.8,1.2){};

\foreach \from/\to in {0/1, 1/2, m/1}
\draw [edge] (\from) to (\to);

\path[-]      (0)  edge   [bend left]   (2);
\path[-]        (0)  edge   [bend right]   (2);
\path[-]	     (m) edge [min distance=1cm]  (m);
\end{scope}
\end{tikzpicture}
\end{center}
We proceed from left to right. Contracting one $yz$-edge in the left
most and second left most graphs, yields an $R_4'''$-minor and
an $R_4$-minor, respectively. The bicircular matroid of the middle
right and of the right most graph are isomorphic. So, we conclude
by noticing that we obtain $P_3$ as a minor of the right most graph
by contracting the edges $yz$ and $zx$. This shows that the bicircular matroids
of the graphs above have an $ex_B(\mathcal{L})$-minor.

To conclude the proof, we must consider the cases when 
$EB$ is a subdivision of $K_3(r,j,0)$ with $j\ge 1$ or with $r\ge 2$. 
In these cases, it suffices to observe that the bicircular matroid of the
following graphs have an $ex_B(\mathcal{L})$-minor.
\begin{center}
\begin{tikzpicture}[scale=0.8]
\begin{scope}[xshift=-7.5cm, yshift=0cm, scale=0.8]
\node [vertex, label = 270:{$y$}] (0) at (-1,0){};
\node [vertex, label = 270:{$x$}] (1) at (1,0){};
\node [vertex, label = 0:{$z$}] (2) at (0,2){};
\node [vertex]  at (0.5,1){};
\node [vertex] (m) at (2,0){};
\foreach \from/\to in {0/2, 2/0, m/1, 1/m}
\path[-]      (\from)  edge   [bend right]   (\to);

\foreach \from/\to in {0/1, 1/2, 0/2}
\draw [edge] (\from) to (\to);

\end{scope}
\begin{scope}[xshift=-2.5cm, yshift=0cm, scale=0.8]
\node [vertex, label = 270:{$y$}] (0) at (-1,0){};
\node [vertex, label = 270:{$x$}] (1) at (1,0){};
\node [vertex, label = 0:{$z$}] (2) at (0,2){};
\node [vertex] (m) at (2,0){};
\node[vertex] at (0,0){};

\foreach \from/\to in {0/1, 1/2, 0/2, 1/m}
\draw [edge] (\from) to (\to);

\path[-]      (0)  edge   [bend left]   (2);
\path[-]        (0)  edge   [bend right]   (2);
\path[-]	     (m) edge [min distance=1cm]  (m);

\end{scope}
\begin{scope}[xshift=2.5cm, yshift=0cm, scale=0.8]
\node [vertex, label = 270:{$y$}] (0) at (-1,0){};
\node [vertex, label = 270:{$x$}] (1) at (1,0){};
\node [vertex, label = 0:{$z$}] (2) at (0,2){};
\node [vertex]  at (0.8,1.2){};
\node [vertex] (m) at (2,0){};
\foreach \from/\to in {0/2, 2/0, m/1, 1/m, 1/2, 2/1}
\path[-]      (\from)  edge   [bend right]   (\to);

\foreach \from/\to in {0/1}
\draw [edge] (\from) to (\to);

\end{scope}
\begin{scope}[xshift=7.5cm, yshift=0cm, scale=0.8]%
\node [vertex, label = 270:{$y$}] (0) at (-1,0){};
\node [vertex, label = 270:{$x$}] (1) at (1,0){};
\node [vertex, label = 0:{$z$}] (2) at (0,2){};
\node [vertex]  at (0.8,1.2){};
\node [vertex] (m) at (2,0){};

\foreach \from/\to in {0/1, 1/m}
\draw [edge] (\from) to (\to);

\path[-]      (0)  edge   [bend left]   (2);
\path[-]        (0)  edge   [bend right]   (2);
\path[-]      (1)  edge   [bend left]   (2);
\path[-]        (1)  edge   [bend right]   (2);
\path[-]	     (m) edge [min distance=1cm]  (m);

\end{scope}
\end{tikzpicture}
\end{center}
As we have done several times, we notice that each graph above contains
a minor whose bicircular matroid belongs to $ex_B(\mathcal{L})$. 
Contracting one $yz$-edge of the left most graph above yields
a graph isomorphic to $R_4$. Similarly,
$R_4'$ is the minor obtained from the middle left graph by contracting
one $yz$-edge. We  see that $P_2$ is a minor of the middle right graph
by removing the $xy$-edge and contracting the $xz$-edge.
By applying analogous operations to the right most graph, we see
that this graph contains  $P_3$ as a minor.

After considering all these possible cases, we conclude that the
end blocks of $G$ are subdivisions of at most one red edge
of $K_3(r,j,l)$ or of $mK_2''$. Hence, we conclude that $G\in\mathcal{F}_4$.
\end{proof}


\section{Characterizations}
\label{sec:CAR}

Using the work of previous sections, we now prove
Theorems~\ref{thm:graphs} and~\ref{thm:excludedminors}, 
and we discuss some of their implications.

%

\begingroup
\def\thetheorem{\ref{thm:graphs}}
\begin{theorem}
Let $G$ be a graph such that $B(G)$ is a connected matroid. 
The bicircular matroid $B(G)$ is a lattice path matroid if and only if
$G$ belongs to $\mathcal{F}_0\cup\mathcal{F}_1\cup \mathcal{F}_2 \cup
\mathcal{F}_3 \cup \mathcal{F}_4$.
\end{theorem}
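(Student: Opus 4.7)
The plan is to derive Theorem~\ref{thm:graphs} as a direct consequence of the work already assembled in Sections~\ref{sec:EM}--\ref{sec:EMGF}, so the proof itself should be short: both directions are now essentially packaged statements, and the real content is precisely the three blocks already established.

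For the \emph{if} direction, I would simply suppose $G \in \mathcal{F}_0 \cup \mathcal{F}_1 \cup \mathcal{F}_2 \cup \mathcal{F}_3 \cup \mathcal{F}_4$ and invoke the corresponding proposition from Section~\ref{sec:GF}. More precisely, depending on which family contains $G$, one of Propositions~\ref{prop:familyF0}, \ref{prop:familyF1}, \ref{prop:familyF2}, \ref{prop:familyF3}, or \ref{prop:familyF4} asserts exactly that $B(G)$ is a lattice path matroid. No extra work is needed here.

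For the \emph{only if} direction, I would argue contrapositively: assume $G \notin \bigcup_{i=0}^{4} \mathcal{F}_i$, while $B(G)$ is connected. By Proposition~\ref{prop:F0-F4}, $B(G)$ must then contain some matroid in $ex_B(\mathcal{L})$ as a minor. By Proposition~\ref{prop:exminors}, every matroid in $ex_B(\mathcal{L})$ is an excluded minor for the class of lattice path matroids, and since lattice path matroids are closed under minors~\cite{boninEJC27}, this forces $B(G)$ not to be a lattice path matroid. Combining the two directions gives the equivalence.

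The statement is therefore really a bookkeeping result: all the difficulty is pushed into Proposition~\ref{prop:F0-F4} (the case analysis on non-outerplanar graphs, $2$-connected graphs, and outerplanar graphs with a cut vertex) and into the verification in Proposition~\ref{prop:exminors} that the eight matroids in $ex_B(\mathcal{L})$ are in fact excluded minors for lattice path matroids. The main obstacle was already overcome in Section~\ref{sec:EMGF}; at the level of Theorem~\ref{thm:graphs} itself there is nothing new to prove, so the write-up should be only a few lines citing the relevant propositions.
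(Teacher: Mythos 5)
Your proposal is correct and follows essentially the same route as the paper: the forward direction cites Propositions~\ref{prop:familyF0}--\ref{prop:familyF4}, and the converse combines Proposition~\ref{prop:exminors} with Proposition~\ref{prop:F0-F4} (you phrase this contrapositively, the paper phrases it directly, but the content is identical). Nothing is missing.
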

\addtocounter{theorem}{-1}
\endgroup

\begin{proof}
By Propositions~\ref{prop:familyF0}, \ref{prop:familyF1},
\ref{prop:familyF2},~\ref{prop:familyF3}, and~\ref{prop:familyF4}
the bicircular matroid of any graph in
$\mathcal{F}_0\cup\mathcal{F}_1\cup \mathcal{F}_2 \cup
\mathcal{F}_3 \cup \mathcal{F}_4$ is a lattice path matroid.
On the other hand, if $B(G)$ is a lattice path matroid, 
then it is $ex_B(\mathcal{L})$-minor free (Proposition~\ref{prop:exminors}). 
Thus, if $B(G)$ is also connected, then by Proposition~\ref{prop:F0-F4},
we conclude that $G\in \mathcal{F}_0\cup\mathcal{F}_1\cup \mathcal{F}_2 \cup
\mathcal{F}_3\cup\mathcal{F}_4$.
\end{proof}

In this paragraph we claim and show that for any graph $G$ in $\mathcal{F}_0$, 
there is some graph in
$\mathcal{F}_1\cup \mathcal{F}_2\cup\mathcal{F}_3\cup\mathcal{F}_4$
whose bicircular matroid is isomorphic to $B(G)$--- we do so
to show that Theorem~\ref{thm:graphs} implies Corollary~\ref{cor:F1-F4}.
For instance, 
any subdivision of at most two edges of $K_4$ has the same bicircular matroid
as some subdivision of at most two edges of $G_1$. Similarly, 
any subdivision of blue and red edges of $K^\ast_{2,3}$ has the same bicircular
matroid as some subdivision of a pair of non parallel edges of $2K_3$. 
Subdivisions of $K_{2,3}$ are  subgraphs of  subdivisions of red and blue
edges of $K''_{2,3}$, and the latter have the same bicircular matroids
as subdivisions of red and blue edges of $K'_{2,3}$. To conclude this
argumentation, we show that subdivisions of $K'_{2,3}$
have isomorphic bicircular matroids to bicircular matroids of some graph
in $\mathcal{F}_4$. To do so, it suffices to notice that the following graphs
have isomorphic bicircular matroids (to the left, $K_{2,3}'$; to the right, a graph in 
$\mathcal{F}_4$).
\begin{center}
\begin{tikzpicture}[scale=0.8]

\begin{scope}[xshift=-4cm, scale=0.9]
\node [vertex] (0) at (-1,0){};
\node [vertex] (1) at (1,0){};
\node [vertex] (2) at (-1,2){};
\node [vertex] (3) at (1,2){};
\node [vertex] (s) at (0,1){};

\draw [edge] (2) to (3);

\foreach \from/\to in {0/1, 1/3}
\draw [edge, blue] (\from) to (\to);

\foreach \from/\to in { 0/s, s/3}
\draw [edge, red] (\from) to (\to);

\path[-]          (0)  edge   [bend left]   (2);
\path[-]          (0)  edge   [bend right]   (2);
\end{scope}
\begin{scope}[xshift=4cm, scale = 0.8]
\node [vertex] (0) at (-1,0){};
\node [vertex] (1) at (1,0){};
\node [vertex] (2) at (-1,2){};
\node [vertex] (3) at (1,2){};
\node [vertex] (s) at (0,1){};

\draw [edge] (2) to (3);

\foreach \from/\to in { 0/s, s/3}
\draw [edge, red] (\from) to (\to);

\path[-]          (0)  edge   [bend left]   (2);
\path[-]          (0)  edge   [bend right]   (2);

\path[-, blue]          (1)  edge   [bend left]   (3);
\path[-, blue]          (1)  edge   [bend right]   (3);

\end{scope}

\end{tikzpicture}
\end{center}

\begin{corollary}\label{cor:F1-F4}
Let $L$ be a connected lattice path matroid. Then, $L$ is a bicircular matroid
if and only if there is some graph $G\in \mathcal{F}_1\cup \mathcal{F}_2 \cup
\mathcal{F}_3\cup\mathcal{F}_4$ such that $L\cong B(G)$. 
\end{corollary}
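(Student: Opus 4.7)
The plan is to derive the corollary immediately from Theorem~\ref{thm:graphs} together with the observations in the paragraph that precedes the corollary. Those observations are exactly what is needed to show that $\mathcal{F}_0$ contributes no bicircular matroid that is not already realized by a graph in $\mathcal{F}_1\cup\mathcal{F}_2\cup\mathcal{F}_3\cup\mathcal{F}_4$, so once they are in hand the corollary is a short packaging argument.

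For the \emph{if} direction, I would note that if $L\cong B(G)$ for some $G\in \mathcal{F}_1\cup\mathcal{F}_2\cup\mathcal{F}_3\cup\mathcal{F}_4$, then in particular $G$ lies in the larger union $\mathcal{F}_0\cup\mathcal{F}_1\cup\mathcal{F}_2\cup\mathcal{F}_3\cup\mathcal{F}_4$; since $L$ is connected, Theorem~\ref{thm:graphs} applied to $B(G)\cong L$ yields that $L$ is a lattice path matroid, and it is bicircular by construction.

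For the \emph{only if} direction, I would start from a connected lattice path matroid $L$ that is also a bicircular matroid, fix a graph $G$ with $L\cong B(G)$ (so that $B(G)$ is connected), and apply Theorem~\ref{thm:graphs} to conclude $G\in \mathcal{F}_0\cup\mathcal{F}_1\cup\mathcal{F}_2\cup\mathcal{F}_3\cup\mathcal{F}_4$. If $G$ is already in $\mathcal{F}_1\cup\mathcal{F}_2\cup\mathcal{F}_3\cup\mathcal{F}_4$ there is nothing to prove. Otherwise $G\in\mathcal{F}_0$, and I would invoke the case analysis of the preceding paragraph, which handles each ``shape'' of $\mathcal{F}_0$ separately: subdivisions of at most two edges of $K_4$ are replaced by corresponding subdivisions of at most two edges of $G_1$ (giving a graph in $\mathcal{F}_1$); subdivisions of the red and blue edges of $K^\ast_{2,3}$ are replaced by subdivisions of a pair of non-parallel edges of $2K_3$ (giving a graph in $\mathcal{F}_2$); subdivisions of $K_{2,3}$, $K'_{2,3}$ and $K''_{2,3}$ all reduce, via the displayed bicircular-matroid isomorphism, to the sketched graph in $\mathcal{F}_4$. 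In each case the replacement graph $H$ satisfies $B(H)\cong B(G)\cong L$.

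The only real content is the case analysis just summarised, and I do not anticipate any serious obstacle there: each replacement is a direct matroid-isomorphism check (two graphs realizing the same series extension of a uniform matroid, or the same ``theta-like'' transversal presentation) of the sort the excerpt already performs, and connectedness of $B(G)$ transfers automatically to $B(H)$ via the isomorphism. Hence the proof of the corollary is essentially two short paragraphs citing Theorem~\ref{thm:graphs} and the preceding constructive paragraph.
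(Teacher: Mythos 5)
Your proposal is correct and follows essentially the same route as the paper: the paper's entire argument for this corollary is the paragraph preceding it, which replaces each shape of graph in $\mathcal{F}_0$ (subdivisions of $K_4$, of $K^\ast_{2,3}$, of $K_{2,3}$, $K'_{2,3}$ and $K''_{2,3}$) by a graph in $\mathcal{F}_1\cup\mathcal{F}_2\cup\mathcal{F}_4$ with an isomorphic bicircular matroid, after which Theorem~\ref{thm:graphs} gives both directions exactly as you describe. The only cosmetic difference is that your ``if'' direction detours through Theorem~\ref{thm:graphs} to re-derive that $L$ is a lattice path matroid, which is already part of the corollary's hypothesis and so is not needed.
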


Recall that  Proposition~\ref{prop:nonouterplanar},
asserts that $\mathcal{F}_0$ contains
all non-outerplanar graphs whose bicircular matroids are connected
lattice path matroids.

\begin{corollary}\label{cor:outerplanar}
Let $L$ be a lattice path matroid. If $L$ is a bicircular matroid, then there
is an outerplanar graph $G$  such that $L\cong B(G)$.
\end{corollary}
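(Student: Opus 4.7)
The plan is to reduce the general (possibly disconnected) case to the connected case already handled by Corollary~\ref{cor:F1-F4}. First I would verify the underlying outerplanarity claim: every graph in $\mathcal{F}_1\cup\mathcal{F}_2\cup\mathcal{F}_3\cup\mathcal{F}_4$ is outerplanar. Inspection of the generating graphs $G_1(r,d,b)$ (Figure~\ref{fig:H1}), $2K_3(r,b)$, $K_3(r,j,l)$ (Figure~\ref{fig:H3}), together with the block-path structure defining $\mathcal{F}_4$, shows that each is built from subdivisions of $K_4$-minus-an-edge, triangles with parallel edges and loops, or blocks glued in a path-like fashion at cut vertices. Since outerplanarity is characterized by the excluded topological minors $K_4$ and $K_{2,3}$, it is preserved under edge subdivision and under $1$-sums at cut vertices, so the whole union lies inside the class of outerplanar graphs.

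Next, the connected case is immediate: if $L$ is a connected lattice path matroid that happens to be bicircular, Corollary~\ref{cor:F1-F4} gives a graph $G\in\mathcal{F}_1\cup\mathcal{F}_2\cup\mathcal{F}_3\cup\mathcal{F}_4$ with $L\cong B(G)$, and by the previous paragraph $G$ is outerplanar. For the general case I would decompose $L$ as a direct sum $L=L_1\oplus\cdots\oplus L_k$ of its connected components. Each $L_i$ is again a lattice path matroid (lattice path matroids are closed under minors \cite{boninEJC27}, hence under restriction to a component) and again a bicircular matroid (bicircular matroids are minor-closed, using that $B(G)|_A=B(G[A])$ for any edge subset $A$). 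Applying Corollary~\ref{cor:F1-F4} to each connected summand yields an outerplanar graph $G_i$ with $B(G_i)\cong L_i$.

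Finally, I would take $G:=G_1\sqcup\cdots\sqcup G_k$ to be the disjoint union. Then $B(G)=B(G_1)\oplus\cdots\oplus B(G_k)\cong L$, and a disjoint union of outerplanar graphs is outerplanar, finishing the proof. I do not expect any real obstacle: the only points requiring justification are the outerplanarity sweep of the four families, which is a finite inspection, and the closure-under-summands properties of the two matroid classes, both of which follow from minor-closure stated earlier in the paper.
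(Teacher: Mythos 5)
Your argument is correct and follows essentially the same route as the paper: the paper likewise reduces to the connected case via Corollary~\ref{cor:F1-F4} (equivalently, via Proposition~\ref{prop:nonouterplanar}, which places every non-outerplanar presentation inside $\mathcal{F}_0$ and then re-presents it by an outerplanar graph in $\mathcal{F}_1\cup\cdots\cup\mathcal{F}_4$), with the outerplanarity of those families and the behaviour under direct sums left as routine checks. Your explicit treatment of the disconnected case by taking disjoint unions of the component presentations is a detail the paper leaves implicit, but it is the standard completion of the same argument.
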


Observation~\ref{obs:recognition} in Section~\ref{sec:GF}, asserts that we
can  recognize the union $\bigcup_{i=0}^4\mathcal{F}_i$ in linear time
with respect to the edge set of the input graph. Thus, the following claim
is an implication of Theorem~\ref{thm:graphs}.

\begin{corollary}
Given an input graph $G$, there is a linear time algorithm (with respect
to $|E(G)|$) that determines whether $B(G)$ is a lattice path matroid.
\end{corollary}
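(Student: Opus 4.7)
The plan is to reduce to the connected-matroid case handled by Theorem~\ref{thm:graphs} and then invoke Observation~\ref{obs:recognition}. First, I would argue that it suffices to test each connected component of the matroid $B(G)$ separately. The class of lattice path matroids is closed under direct sums: given interval presentations $(E_1,\mathcal{N}_1)$ and $(E_2,\mathcal{N}_2)$ with interval orderings, concatenating them produces an interval presentation of $L_1\oplus L_2$. Since every matroid decomposes uniquely into its connected components and a matroid is a lattice path matroid if and only if each component is, the test for $B(G)$ reduces to a test on each connected matroid-component.

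Next, I would compute the connected components of $B(G)$ in linear time by preprocessing $G$. Standard linear-time algorithms produce the block-cut tree of $G$, from which one reads off the loops, coloops, and parallel classes of $B(G)$; matroid-components are then exactly the $B(H)$ for certain unions $H$ of blocks of $G$ that can be identified by a single pass over the block-tree. Each such $H$ has a well-defined edge set $E(H)\subseteq E(G)$, and the $E(H)$'s partition $E(G)$ up to coloops (which are trivially lattice path). Crucially, each $B(H)$ is a connected bicircular matroid, so Theorem~\ref{thm:graphs} applies.

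Finally, on each $H$ I would apply Observation~\ref{obs:recognition}, which decides membership in $\mathcal{F}_0\cup\mathcal{F}_1\cup\mathcal{F}_2\cup\mathcal{F}_3\cup\mathcal{F}_4$ in time $O(|E(H)|)$. By Theorem~\ref{thm:graphs}, this is equivalent to $B(H)$ being a lattice path matroid. Answering ``yes'' iff every $H$ passes, and summing the time bounds over the components, gives a total running time of $O(|E(G)|)$.

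The main (mild) obstacle is the decomposition step: one has to verify that the matroid-components of $B(G)$ are exactly the bicircular matroids of graphs computable from the block-tree of $G$ in linear time. This is a routine but slightly delicate bookkeeping step involving loops and the distinction between blocks that contain a cycle and those that do not; once handled, the corollary is immediate from Theorem~\ref{thm:graphs} and Observation~\ref{obs:recognition}.
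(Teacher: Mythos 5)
Your proposal is correct and follows the same route as the paper, which derives the corollary directly from Observation~\ref{obs:recognition} together with Theorem~\ref{thm:graphs}. The only difference is that you spell out the reduction to the connected components of $B(G)$ (via closure of lattice path matroids under direct sums and a block-tree computation), a step the paper leaves implicit since both Theorem~\ref{thm:graphs} and Observation~\ref{obs:recognition} are stated for the connected case; this is a harmless and in fact welcome addition.
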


To conclude this work, we exhibit the list of excluded bicircular minors
to the class of lattice path matroids. We depict an affine presentations
of these in Figure~\ref{fig:allex}, and in Figure~\ref{fig:allexbic} we display 
a bicircular presentation of these matroids.

\begingroup
\def\thetheorem{\ref{thm:excludedminors}}
\begin{theorem}
A bicircular matroid is a lattice path matroid if and only if
it has no one of the following matroids as a minor:
\[
C^{2,4},~\mathcal{W}^3,~A^3,~R^3,~R^4,~D^4,~B^1, \text{ and } S^1.
\]	
\end{theorem}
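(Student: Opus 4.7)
The plan is to derive Theorem~\ref{thm:excludedminors} from the work already in place, splitting into the two implications in the usual way.

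For the forward direction, I would argue: if $B(G)$ is a lattice path matroid, then since the class of lattice path matroids is closed under taking minors \cite{boninEJC27}, every minor of $B(G)$ is also a lattice path matroid. By Proposition~\ref{prop:exminors}, none of the eight matroids $C^{2,4}, \mathcal{W}^3, A^3, R^3, R^4, D^4, B^1, S^1$ is a lattice path matroid, so none of them can appear as a minor of $B(G)$. This half is essentially immediate once Proposition~\ref{prop:exminors} is granted.

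For the backward direction, suppose $B(G)$ is $ex_B(\mathcal{L})$-minor free. The core of the argument is Proposition~\ref{prop:F0-F4} together with Theorem~\ref{thm:graphs}, both of which assume that $B(G)$ is connected. So the first step is to reduce to the connected case. I would decompose $B(G)$ into connected components $M_1 \oplus \cdots \oplus M_k$; since each $M_i$ corresponds to a subgraph $H_i$ of $G$ whose edge set constitutes one connected component of $B(G)$, each $M_i = B(H_i)$ is a connected bicircular matroid. Every minor of $M_i$ is a minor of $B(G)$, so each $M_i$ is also $ex_B(\mathcal{L})$-minor free. By Proposition~\ref{prop:F0-F4}, every $H_i$ lies in $\mathcal{F}_0 \cup \mathcal{F}_1 \cup \mathcal{F}_2 \cup \mathcal{F}_3 \cup \mathcal{F}_4$, and by Theorem~\ref{thm:graphs} each $M_i$ is a lattice path matroid.

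To finish, I would invoke closure of lattice path matroids under direct sums: given interval presentations of $M_1, \ldots, M_k$, concatenating the underlying linear orders and the sequences of incomparable intervals yields an interval presentation of $M_1 \oplus \cdots \oplus M_k$, so $B(G)$ is itself a lattice path matroid. The main obstacle is bookkeeping rather than genuine novelty: one must verify cleanly that connected components of a bicircular matroid are themselves bicircular matroids of subgraphs of $G$ (so that Proposition~\ref{prop:F0-F4} is applicable component by component) and that the direct sum step preserves the lattice path property. Both are standard but deserve an explicit remark, since Theorem~\ref{thm:graphs} is stated under the connectedness hypothesis and must be extended to the disconnected setting to obtain the unconditional excluded-minor characterization.
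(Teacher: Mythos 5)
Your proposal is correct and follows essentially the same route as the paper: the forward direction is immediate from Proposition~\ref{prop:exminors} and minor-closure of lattice path matroids, and the backward direction reduces to the connected case (the paper invokes closure of lattice path matroids under disjoint unions in one line, where you spell out the component decomposition explicitly) and then applies Proposition~\ref{prop:F0-F4} and Theorem~\ref{thm:graphs}. The extra bookkeeping you flag is handled correctly and matches what the paper leaves implicit.
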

\addtocounter{theorem}{-1}
\endgroup
\begin{proof}

On the one hand, Proposition~\ref{prop:exminors} shows that
if a bicircular matroid has either of the above as a minor, then 
it is not a lattice path matroid. On the other hand, since lattice path
matroids are closed under disjoint unions, then
we can assume that  the bicircular matroid $B(G)$ is connected. 
Thus, by Proposition~\ref{prop:F0-F4}, if $B(G)$ is $ex_B(\mathcal{L})$-minor
free, then $G\in\mathcal{F}_i$ for some $i\in\{0,1,2,3\}$.
Now, the claim follows by Theorem~\ref{thm:graphs}.
\end{proof}

\begin{figure}[ht!]
\begin{center}

\begin{tikzpicture}[scale=0.8]

\begin{scope}[xshift=-7.5cm, yshift = 3cm, scale=0.8]
\node [vertex] (0) at (-1.5,0){};
\node [vertex] (1) at (1.5,0){};
\node [vertex] (3) at (0,2.5){};
\node [vertex] (01) at (-0.75,1.25){};
\node [vertex] (13) at (0.75,1.25){};
\node [vertex] (30) at (0,0){};
\node[] at (0,-1){$\mathcal{W}^3$};

\foreach \from/\to in {1/0, 0/3, 3/1}
\path[-] (\from) edge (\to);

\end{scope}

\begin{scope}[xshift=-2.5cm, yshift=3cm, scale=0.8]
\coordinate (0) at (-1.5,0){};
\node [vertex] (00) at (-1.5,0.15){};
\node [vertex] (01) at (-1.5,-0.15){};
\coordinate  (1) at (1.5,0){};
\node [vertex] (10) at (1.5,-0.15){};
\node [vertex] (11) at (1.5,0.15){};
\node [vertex] (30) at (-0.5,1.5){};
\node [vertex] (31) at (0.5,1.5){};
\node [vertex] (c) at (0,0){};
\node[] at (0,-1){$R^3$};

\path[-] (0) edge (1);

\end{scope}

\begin{scope}[xshift=2.5cm, yshift=3cm, scale=0.8]
\node [vertex] (0) at (-1,0){};
\node [vertex] (1) at (1,0){};
\node [vertex] (3) at (0,2.5){};
\node [vertex] (01) at (-0.5,1.25){};
\node [vertex] (13) at (0.5,1.25){};
\node [vertex] (30) at (0,-0.3){};
\node[] at (0,-1){$A^3$};

\foreach \from/\to in {0/3, 3/1}
\path[-] (\from) edge (\to);

\end{scope}

\begin{scope}[xshift=7.5cm, yshift=3cm, scale=0.9]
\coordinate (c0) at (-1.5,0){};
\coordinate (c1) at (1.5,0){};
\coordinate (i0) at (-1.5,1.5){};
\coordinate (i1) at (1.5,1.5){};
\node [vertex] (0) at (-1.5,0){};
\node [vertex] (1) at (1.5,0){};
\node [vertex] (2) at (0,0){};
\node [vertex] (1) at (0.1,0.75){};
\node [vertex] (2) at (-0.1,0.75){};
\node [vertex] (0) at (-1.5,1.5){};
\node [vertex] (1) at (1.5,1.5){};
\node [vertex] (2) at (0,1.5){};
\node[] at (0,-1){$(S^1)^\ast$};

\foreach \from/\to in {c0/c1, i0/i1}
\path[-] (\from) edge (\to);
\end{scope}

\begin{scope}[xshift=-7.5cm, yshift=-1.8cm, scale=0.8]
\coordinate (c0) at (-1.5,0){};
\coordinate (c1) at (1.5,0){};
\coordinate (d0) at (-1.5,1){};
\coordinate (d1) at (1.5,1){};
\coordinate (i0) at (-1.5,2){};
\coordinate (i1) at (1.5,2){};
\node [vertex] (0) at (-1.5,0){};
\node [vertex] (1) at (1.5,0){};
\node [vertex] (2) at (0,0){};
\node [vertex] (0) at (-1.5,1){};
\node [vertex] (1) at (1.5,1){};
\node [vertex] (2) at (0,1){};
\node [vertex] (0) at (-1.5,2){};
\node [vertex] (1) at (1.5,2){};
\node [vertex] (2) at (0,2){};
\node[] at (0,-1.2){$B^1$};

\foreach \from/\to in {c0/c1, d0/d1, i0/i1}
\path[-] (\from) edge (\to);
\end{scope}

\begin{scope}[xshift=-2.5cm, yshift=-2cm, scale=0.8]
\node [vertex] (0) at (-1.3,0){};
\node [vertex] (1) at (1.3,0){};
\node [vertex] (3) at (-1.3,3){};
\node [vertex] (4) at (1.3,3){};
\node [vertex] (5) at (0,3.7){};
\node [vertex] (6) at (0,0.7){};
\node[] at (0,-1){$C^{2,4}$};

\foreach \from/\to in {0/1, 1/4, 3/4, 3/0, 5/3, 5/4}
\path[-] (\from) edge (\to);

\foreach \from/\to in {6/0, 6/1, 6/5}
\draw [edge, dotted] (\from) to (\to);

\end{scope}

\begin{scope}[xshift=2.5cm, yshift=-2.2cm, scale=0.8]
\coordinate (c0) at (0,0){};
\coordinate (c1) at (0,3){};
\coordinate (d0) at (-1.8,1){};
\coordinate (d1) at (-1.8,4){};
\coordinate (i0) at (1.8,1){};
\coordinate (i1) at (1.8,4){};
\node [vertex]  at (-0.15,2.3){};
\node [vertex]  at (0.15,2.3){};
\node [vertex] at (0,1){};
\node [vertex] at (-1.2,2.8){};
\node [vertex]  at (1.2,2.8){};
\node [vertex] at (-1.2,1.5){};
\node [vertex] at (1.2,1.5){};
\node[] at (0,-0.8){$R^4$};

\foreach \from/\to in {c0/c1, d0/d1, i0/i1, c0/d0, c0/i0, c1/i1, c1/d1}
\path[-] (\from) edge (\to);
\end{scope}

\begin{scope}[xshift=7.5cm, yshift=-2.2cm, scale=0.8]
\coordinate (c0) at (0,0){};
\coordinate (c1) at (0,3){};
\coordinate (d0) at (-1.8,1){};
\coordinate (d1) at (-1.8,4){};
\coordinate (i0) at (1.8,1){};
\coordinate (i1) at (1.8,4){};
\node [vertex] (1) at (-1.2,1.3){};
\node [vertex] at (-1.2,2.2){};
\node [vertex] (2) at (-1.2,3){};
\node [vertex] (11) at (-0.6,1){};
\node [vertex] at (-0.6,1.9){};
\node [vertex] (22) at (-0.6,2.7){};

\node [vertex]  at (1,2.6){};
\node [vertex] at (1,1.6){};
\node[] at (0,-0.8){$D^4$};

\foreach \from/\to in {c0/c1, d0/d1, i0/i1, c0/d0, c0/i0, c1/i1, c1/d1, 1/2, 11/22}
\path[-] (\from) edge (\to);
\end{scope}

\end{tikzpicture}

\caption{An affine presentation of the eight bicircular excluded minors for lattice
path matroids (we choose to depict the dual of $S^1$ because $S^1$ has rank $5$).}
\label{fig:allex}
\end{center}
\end{figure}
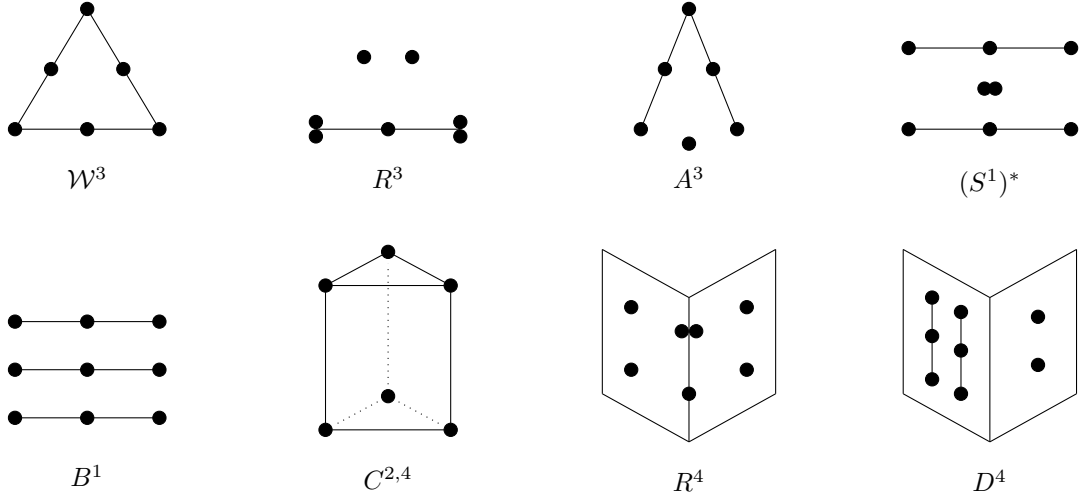

\begin{figure}[ht!]
\begin{center}

\begin{tikzpicture}[scale=0.8]

\begin{scope}[xshift=-7.5cm, yshift = 3cm, scale=0.8]
\node [vertex] (0) at (-1,0){};
\node [vertex] (1) at (1,0){};
\node [vertex] (3) at (0,2){};
\node[] at (0,-1){$B(W)\cong \mathcal{W}^3$};

\foreach \from/\to in {1/0, 0/3, 3/1}
\draw [edge] (\from) to (\to);

\path[-]         (1) edge [min distance=1cm, out=45, in=-45] (1);
\path[-]         (0) edge [min distance=1cm, out=225, in=135] (0);
\path[-]         (3) edge [min distance=1cm] (3);
\end{scope}

\begin{scope}[xshift=-2.5cm, yshift=3cm, scale=0.8]
\node [vertex] (0) at (-1,0){};
\node [vertex] (1) at (1,0){};
\node [vertex] (3) at (0,2){};
\node[] at (0,-1){$B(R_3)\cong R^3$};

\foreach \from/\to in {1/0, 0/3, 3/1}
\draw [edge] (\from) to (\to);

\path[-]         (0) edge [min distance=1cm, out=90, in=175] (0);
\path[-]         (0) edge [min distance=1cm, out=185, in=270] (0);
\path[-]         (1) edge [min distance=1cm, out=90, in=5] (1);
\path[-]         (1) edge [min distance=1cm, out=355, in=270] (1);

\end{scope}

\begin{scope}[xshift=2.5cm, yshift=3cm, scale=0.8]
\node [vertex] (0) at (-1,0){};
\node [vertex] (1) at (1,0){};
\node [vertex] (3) at (0,2){};
\node[] at (0,-1){$B(A_3)\cong A^3$};

\foreach \from/\to in {0/1}
\draw [edge] (\from) to (\to);

\path[-]         (3) edge [min distance=1cm] (3);

\foreach \from/\to in {3/1, 1/3, 3/0, 0/3}
\path[-]          (\from)  edge   [bend left]   (\to);

\end{scope}

\begin{scope}[xshift=7.5cm, yshift=3cm, scale=0.9]
\node [vertex] (0) at (-1,0){};
\node [vertex] (1) at (1,0){};
\node [vertex] (2) at (-1,2){};
\node [vertex] (3) at (1,2){};
\node [vertex] (s) at (0,1){};
\node[] at (0,-1){$B(S_1)\cong S^1$};

\foreach \from/\to in {0/1, 3/2, 0/s, s/3}
\draw [edge] (\from) to (\to);

\path[-]          (0)  edge   [bend left]   (2);
\path[-]          (0)  edge   [bend right]   (2);
\path[-]          (1)  edge   [bend right]   (3);
\path[-]          (1)  edge   [bend left]   (3);
\end{scope}

\begin{scope}[xshift=-7.5cm, yshift=-1.5cm, scale=0.8]
\node [vertex] (0) at (-1,0){};
\node [vertex] (1) at (1,0){};
\node [vertex] (2) at (0,2){};
\node[] at (0,-1){$B(3K_3)\cong B^1$};

\path[-]          (0)  edge   [bend left]   (2);
\path[-]          (0)  edge   [bend right]   (2);
\path[-]          (1)  edge   [bend right]   (2);
\path[-]          (1)  edge   [bend left]   (2);
\path[-]          (1)  edge   [bend right]   (0);
\path[-]          (1)  edge   [bend left]   (0);

\foreach \from/\to in {0/1, 0/2, 1/2}
\draw [edge] (\from) to (\to);
\end{scope}

\begin{scope}[xshift=-2.5cm, yshift=-1.5cm, scale=0.8]
\node [vertex] (0) at (-1,0){};
\node [vertex] (1) at (1,0){};
\node [vertex] (c) at (0,0.9){};
\node [vertex] (3) at (0,2.24){};
\node[] at (0,-1){$B(P_2)\cong C^{2,4}$};

\foreach \from/\to in {c/1,c/0, c/3, 0/c, 1/c, 3/c}
\path[-]          (\from)  edge   [bend left]   (\to);

\end{scope}

\begin{scope}[xshift=2.5cm, yshift=-1.5cm, scale=0.8]
\node [vertex] (0) at (-1,0){};
\node [vertex] (1) at (1,0){};
\node [vertex] (2) at (-1,2){};
\node [vertex] (3) at (1,2){};
\node[] at (0,-1){$B(R_4)\cong R^4$};

\foreach \from/\to in {0/1, 0/3, 1/3}
\draw [edge] (\from) to (\to);

\path[-]          (0)  edge   [bend left =20]   (2);
\path[-]          (0)  edge   [bend right =20]   (2);

\path[-]         (3) edge [min distance=1cm] (3);
\path[-]         (3) edge [min distance=1cm, out=40, in=320] (3);
\end{scope}

\begin{scope}[xshift=7.5cm, yshift = -1.5cm, scale=0.8]
\node [vertex] (0) at (-1.1,0){};
\node [vertex] (1) at (1.1,0){};
\node [vertex] (3) at (0,2.2){};
\node [vertex] at (0,0){};
\node[] at (0,-1){$B(D_4)\cong D^4$};

\foreach \from/\to in {0/1, 0/3, 1/3}
\draw [edge] (\from) to (\to);

\foreach \from/\to in {3/1, 1/3, 3/0, 0/3}
\path[-]          (\from)  edge   [bend left]   (\to);

\end{scope}

\end{tikzpicture}

\caption{A bicircular presentation of the eight bicircular excluded minors for lattice
path matroids. In each case we choose the presentation that minimizes the
number of loops.}
\label{fig:allexbic}
\end{center}
\end{figure}
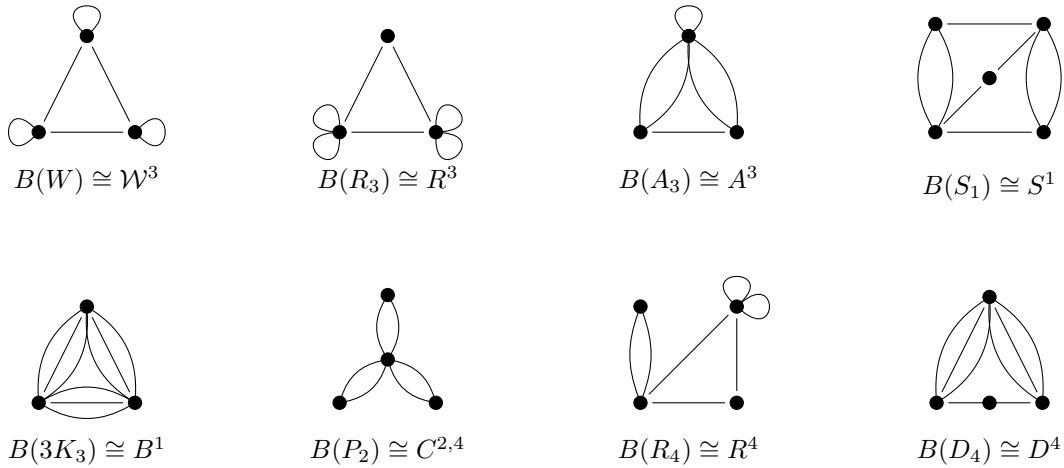

It was recently proved that there are finitely many excluded minors
to the class of bicircular matroids \cite{deVosAR}. This, together with
Theorem~\ref{thm:excludedminors} implies that there are
finitely many excluded minors to the class of lattice path bicircular matroids.

\begin{corollary}\label{cor:finitelatbic}
There are only finitely many excluded minors to the class of lattice path 
bicircular matroids.
\end{corollary}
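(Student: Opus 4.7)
The plan is to reduce the claim to a dichotomy on each excluded minor, based on whether it is itself a bicircular matroid. So let $M$ be an excluded minor for the class of lattice path bicircular matroids, meaning $M$ is not a lattice path bicircular matroid, yet every proper minor of $M$ is. First, I would observe that since every proper minor of $M$ lies in the intersection of lattice path matroids and bicircular matroids, in particular every proper minor of $M$ is a bicircular matroid. Therefore $M$ either (a) is itself a bicircular matroid, or (b) is an excluded minor for the class of bicircular matroids.

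In case (a), since $M$ is bicircular but fails to be a lattice path bicircular matroid, $M$ must fail to be a lattice path matroid. Because every proper minor of $M$ is a lattice path matroid, $M$ is an excluded minor for lattice path matroids among bicircular matroids. By Theorem~\ref{thm:excludedminors}, then $M\in ex_B(\mathcal{L})$, which has exactly eight elements. In case (b), $M$ belongs to the set of excluded minors for the class of bicircular matroids, which is finite by the recent result of \cite{deVosAR}.

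Thus the set of excluded minors for the class of lattice path bicircular matroids is contained in the union of $ex_B(\mathcal{L})$ and the finite list of excluded minors for bicircular matroids; in particular it is finite. The argument is short, and no step is really an obstacle: the only delicate point is noticing that the hypothesis ``every proper minor is in the intersection class'' forces each proper minor to be bicircular, so that failure of $M$ to be bicircular automatically promotes $M$ to an excluded minor of the ambient class of bicircular matroids, at which point \cite{deVosAR} applies.
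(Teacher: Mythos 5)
Your proposal is correct and is exactly the argument the paper intends: the paper's own proof is the one-line observation that the finiteness of excluded minors for bicircular matroids \cite{deVosAR} together with Theorem~\ref{thm:excludedminors} yields the claim, and your case split (either $M$ is bicircular, whence Theorem~\ref{thm:excludedminors} and the minor-minimality of $M$ force $M\in ex_B(\mathcal{L})$, or $M$ is an excluded minor for bicircular matroids) is the natural way to fill in the details. No gaps.
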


Allow us to discuss one more implication of the characterizations proposed
in this section. By Corollary~\ref{cor:F1-F4}, every lattice path bicircular
matroid admits a bicircular presentation by a graph in $\mathcal{F}_i$
for some $i\in\{1,2,3,4\}$. By definition of $\mathcal{F}_4$, all graphs in
this family have a cut vertex. This implies that their bicircular matroids
are not $2$-connected. Also, by Lemma~\ref{lem:familyF3}, the bicircular
matroids of graphs in $\mathcal{F}_3$ are minors of bicircular
matroids of graphs in $\mathcal{F}_1\cup \mathcal{F}_2$. Thus, every
$2$-connected lattice path bicircular matroid is a minor of the bicircular
matroid of some graph in $\mathcal{F}_1\cup\mathcal{F}_2$. 

Recall that the bicircular matroid of some $2$-connected graph $G$ is
cosimple whenever
$G$ has no subdivided edges. The subfamilies of graphs with no subdivided edges
of $\mathcal{F}_1\cup \mathcal{F}_2$ are the graph families
$G_1(r,b,d)$ and $2K_3(r,b)$. The bicircular matroids of these graphs
have simple geometric descriptions. The bicircular matroid of $2K_3(r,b)$
is the rank $3$ matroid with one $(r+2)$-point line, one $(b+2)$-point line, 
and two points in general position. The bicircular matroid
of $G_1(r,d,b)$ is the rank $4$ matroid with two hyperplanes $H_1$ and $H_2$
intersecting in a $d$-point line $\ell$, where $H_1$ and $H_2$ contain
$\ell$, one point that belongs to no $3$-point line, and 
an $(r+2)$-point line and a $(b+2)$-point line, respectively.
The following are affine representations of $B(2K_3(2,1))$ and of $B(G_1(1,4,0))$.
\begin{center}
\begin{tikzpicture}[scale=0.8]

\begin{scope}[xshift=-3.5cm, yshift = 0.75cm, scale=0.9]%
\coordinate (c0) at (-1.5,0){};
\coordinate (c1) at (1.5,0){};
\coordinate (i0) at (-1.5,1.5){};
\coordinate (i1) at (1.5,1.5){};
\node [vertex]  at (-1.5,0){};
\node [vertex]  at (1.5,0){};
\node [vertex]  at (-0.5,0){};
\node [vertex]  at (0.5,0){};
\node [vertex]  at (-0.75,0.75){};
\node [vertex]  at (0.75,0.75){};
\node [vertex] (0) at (-1.5,1.5){};
\node [vertex] (1) at (1.5,1.5){};
\node [vertex] (2) at (0,1.5){};

\foreach \from/\to in {c0/c1, i0/i1}
\path[-] (\from) edge (\to);
\end{scope}
\begin{scope}[xshift=3cm, scale = 0.8]%
\coordinate (c0) at (0,0){};
\coordinate (c1) at (0,3){};
\coordinate (d0) at (-1.8,1){};
\coordinate (d1) at (-1.8,4){};
\coordinate (i0) at (1.8,1){};
\coordinate (i1) at (1.8,4){};
\node [vertex] (1) at (-1.2,1.3){};
\node [vertex] at (-1.2,2.2){};
\node [vertex] (2) at (-1.2,3){};

\node [vertex] (11) at (1.2,1.3){};
\node [vertex] (22) at (1.2,3){}; 

\node [vertex]  at (-0.6,1.8){};
\node [vertex] at (0.6,1.8){};
\node [vertex]  at (0,0.5){};
\node [vertex] at (0,1.2){};
\node [vertex]  at (0,1.9){};
\node [vertex] at (0,2.6){};

\foreach \from/\to in {c0/c1, d0/d1, i0/i1, c0/d0, c0/i0, c1/i1, c1/d1, 1/2, 11/22}
\path[-] (\from) edge (\to);
\end{scope}

\end{tikzpicture}
\end{center}

\begin{proposition}\label{prop:2conectedcosimple}
For a $2$-connected cosimple matroid $M$ the following statements are equivalent:
\begin{enumerate}
	\item $M$ is a lattice path bicircular matroid,
	\item $M$ is a minor of $B(G_1(r,b,d))$ or of $B(2K_3(r,b))$, for 
	some non-negative integers $r$, $b$ and $d$.
\end{enumerate}
\end{proposition}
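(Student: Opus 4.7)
The implication (2) $\Rightarrow$ (1) will be immediate: since $G_1(r,b,d) \in \mathcal{F}_1$ and $2K_3(r,b) \in \mathcal{F}_2$, Propositions~\ref{prop:familyF1} and~\ref{prop:familyF2} yield that $B(G_1(r,b,d))$ and $B(2K_3(r,b))$ are lattice path matroids, and both the class of lattice path matroids and the class of bicircular matroids are minor-closed, so any minor is a lattice path bicircular matroid.

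For (1) $\Rightarrow$ (2), the plan is to apply Corollary~\ref{cor:F1-F4}. Since $2$-connectedness of a matroid coincides with matroid-connectedness, the corollary will give $M \cong B(G)$ for some $G \in \bigcup_{i=1}^{4} \mathcal{F}_i$. Graphs in $\mathcal{F}_4$ have cut vertices, and as observed in the discussion preceding Corollary~\ref{cor:F1-F4} their bicircular matroids fail to be $2$-connected, so one may restrict to $G \in \mathcal{F}_1 \cup \mathcal{F}_2 \cup \mathcal{F}_3$. For $G \in \mathcal{F}_1$ (respectively $\mathcal{F}_2$), the definition forces $G$ to be a subdivision of $G_1(r,d,b)$ (respectively $2K_3(r,b)$); since each subdivided edge contributes a series pair, i.e., a $2$-cocircuit, to $B(G)$, cosimpleness will force $G$ to actually coincide with $G_1(r,d,b)$ (respectively $2K_3(r,b)$), so $M$ is trivially a minor of the target matroid.

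It remains to handle $G \in \mathcal{F}_3$. Here the plan is to observe that cosimpleness eliminates items (2) and (3) from the definition of $\mathcal{F}_3$: in both cases, any cycle vertex that is neither a chord-endpoint nor a loop-bearing vertex has degree $2$ with two non-loop incident edges, producing a $2$-cocircuit in $B(G)$; the only way to avoid this collapses the cycle to length $2$, which is already subsumed by item (4). Hence $G$ will be either $K_3(r,j,l)$ (item (1) without subdivisions, with $r \geq 1$ and $j + l \geq 1$) or a connected graph on two vertices (item (4)). In both remaining subcases, the explicit constructions in the proof of Lemma~\ref{lem:familyF3} exhibit $G$ as a subgraph minor of $G_1(r',d',b')$ or of $2K_3(r,j)$ without introducing any subdivisions (the intermediate graph $H$ is a contraction or a subgraph, not a subdivision, of $G_1$ or $2K_3$ in these cases). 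Consequently $M \cong B(G)$ is a minor of the corresponding $B(G_0)$, finishing the proof.

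The main obstacle will be the cosimpleness-driven reduction within $\mathcal{F}_3$: one must verify carefully that items (2) and (3) really do collapse into items (1) and (4) under cosimpleness, since otherwise the intermediate graph $H$ given by Lemma~\ref{lem:familyF3} could itself be a subdivision of $G_1$ or $2K_3$, and the argument would need the additional matroid-theoretic step that cosimple minors of a matroid $N$ are minors of its cosimplification $\mathrm{co}(N)$, applied here with $\mathrm{co}(B(H)) = B(G_0)$. The direct case analysis avoids this detour at the cost of some graph-theoretic bookkeeping.
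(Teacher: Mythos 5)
Your proof is correct and follows essentially the same route as the paper: the paper's own proof of Proposition~\ref{prop:2conectedcosimple} is just a pointer to the two paragraphs preceding it, which invoke Corollary~\ref{cor:F1-F4}, discard $\mathcal{F}_4$ via $2$-connectivity, reduce $\mathcal{F}_3$ to $\mathcal{F}_1\cup\mathcal{F}_2$ via Lemma~\ref{lem:familyF3}, and observe that cosimpleness excludes subdivided edges, leaving exactly the graphs $G_1(r,b,d)$ and $2K_3(r,b)$. The one place where you are more careful than the paper --- checking that cosimpleness collapses items (2) and (3) of the definition of $\mathcal{F}_3$ into item (4), forces item (1) to be unsubdivided, and that the constructions in the proof of Lemma~\ref{lem:familyF3} then realize $B(G)$ as a minor of an \emph{unsubdivided} $B(G_1(r',d',b'))$ or $B(2K_3(r,j))$ --- addresses a step the paper leaves implicit, which would otherwise require the ``cosimple minors of a series extension of $N$ are minors of $N$'' fact you mention at the end; this is a worthwhile addition rather than a divergence.
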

\begin{proof}
This statement is proved in the two preceding paragraphs.
\end{proof}

\begin{corollary}
The cosimplification of  $2$-connected  lattice path bicircular matroid has rank at
most $4$.
\end{corollary}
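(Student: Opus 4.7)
The plan is to feed the cosimplification of $M$ into Proposition~\ref{prop:2conectedcosimple}. Given a $2$-connected lattice path bicircular matroid $M$, set $N = co(M)$. Since $M$ is $2$-connected with more than one element, it has no coloops, so $N$ is obtained from $M$ by contracting one element from each non-trivial coparallel class. In particular, $N$ is a minor of $M$, and since both the class of lattice path matroids \cite{boninEJC27} and the class of bicircular matroids are minor-closed, $N$ is again a lattice path bicircular matroid; it is cosimple by construction.

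Next I would verify that $N$ inherits $2$-connectivity from $M$. Passing to duals, this amounts to the assertion that $si(M^\ast)$ is connected whenever $M^\ast$ is, which follows from the standard observation that deleting one element of a parallel pair in a connected matroid preserves the ``lie in a common circuit'' relation on the remaining elements. With this in hand, $N$ is a $2$-connected cosimple lattice path bicircular matroid, so Proposition~\ref{prop:2conectedcosimple} applies: $N$ is a minor of $B(G_1(r,d,b))$ or of $B(2K_3(r,b))$ for some non-negative integers $r,d,b$. The former matroid has rank $4$ and the latter has rank $3$, and taking minors cannot increase rank; therefore $r(co(M)) = r(N) \le 4$.

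I expect no real obstacle: every ingredient is already available in the paper, and the only ``new'' check is the routine preservation of connectivity under cosimplification, which is dual to the elementary fact about simplification recalled above.
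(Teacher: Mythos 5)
Your argument is correct and follows the same route the paper intends: the corollary is stated as an immediate consequence of Proposition~\ref{prop:2conectedcosimple}, and you apply that proposition to $co(M)$ after supplying the routine checks (that $co(M)$ is a minor of $M$, hence still a lattice path bicircular matroid by minor-closure, and that it remains $2$-connected and cosimple) which the paper leaves implicit. The rank bound then follows exactly as you say, since $B(G_1(r,d,b))$ and $B(2K_3(r,b))$ have ranks $4$ and $3$ and taking minors cannot increase rank.
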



\section{Conclusions}
\label{sec:conclusions}

Theorem~\ref{thm:excludedminors} lists all excluded bicircular minors
to the class of lattice path matroids. A natural extension of this work is to
exhibit the list of excluded lattice path minors to the class of bicircular matroids.
Moreover, Corollary~\ref{cor:finitelatbic} asserts that there are only a finite number
of excluded minors for the class of  lattice path bicircular matroids, which
are these excluded minors? 
\begin{problem}
Exhibit the excluded minors for lattice path bicircular matroids.
\end{problem}
As Sivaraman and Slilaty \cite{sivaramanGC2022} suggested,
we also considered to study the intersection of bicircular matroids and
multipath matroids; but it turned out to be (even more) technical than the
present work. The interested reader could try to find nicer arguments to
characterize multipath bicircular matroids.
Also, the dual class of transversal matroids is the class of strict gammoids.
Since lattice path matroids and cobicircular matroids are duals of transversal
matroids,  studying the intersection of bicircular matroids and strict gammoids
would be a nice extension of the present work and of the work of
Sivaraman and Slilaty \cite{sivaramanGC2022}.

Finally, Corollary~\ref{cor:outerplanar} asserts that every lattice
path bicircular matroid is the bicircular matroid of some
outerplanar graph; is there an interesting characterization of
bicircular matroids of outerplanar graphs?
We believe it could also be interesting to consider super classes
of outerplanar graphs. For instance, studying the class $\mathcal{C}$
of bicircular matroids of series-parallel graphs could yield a nice
problem to investigate. In particular, all lattice path bicircular matroids
belong to $\mathcal{C}$.  Also, if $B(G)$ is representable over $GF(4)$
then it is $U_{4,6}$-minor free, and thus $G$ is $K_4$-minor free, i.e.,
$G$ is a series-parallel graph. We stress out that $\mathcal{C}$ is not the class
of $U_{4,6}$-minor free bicircular matroids since $U_{4,6}\cong B(G_1)$
and $G_1$ is a series-parallel graph.

\section*{Acknowledgements}
This work was carried out during a visit of the first author
at FernUniversit\"at in Hagen, supported by DAAD grant 57552339.

\end{document}